\newtheorem{theorem}{Theorem}
\newtheorem{proposition}{Proposition}
\newtheorem{lemma}[proposition]{Lemma}
\newtheorem{corollary}{Corollary}
\newtheorem{definition}{Definition}
\numberwithin{equation}{section}		 
\numberwithin{proposition}{section}			 
\numberwithin{table}{section}				 
\numberwithin{definition}{section}
\numberwithin{theorem}{section}
\numberwithin{corollary}{section}
\numberwithin{exercise}{section}
\title{
		\vspace{-1in} 	
		\usefont{OT1}{bch}{b}{n}
		\normalfont \normalsize \textsc{} \\ [25pt]
		%\horrule{0.5pt} \\[0.4cm]
		\huge  Stability of periodic orbits in no-slip billiards
		%\horrule{2pt} \\[0.5cm]
}
\date{}%January 31, 2016
\author{\normalfont \large 
   C. Cox\footnote{Department of Mathematics, Washington University, Campus Box 1146, St. Louis, MO 63130}, 
\ R. Feres\footnotemark[1],  \ H.-K. Zhang\footnote{Department of Mathematics, University of Massachusetts in Amherst} 
}
 \date{\today}
\begin{document}

\maketitle

\begin{abstract}
\begin{center}
 Abstract \end{center}
{\small  
Rigid bodies collision maps in dimension two,  under a natural set of physical requirements, can be classified into two types: the standard specular reflection map and a second which we call, after Broomhead and Gutkin, {\em no-slip.} This leads to the study of {\em no-slip billiards}\----planar billiard systems in which the moving particle is a disc (with rotationally symmetric mass distribution) whose translational and rotational velocities can both change at each collision with the boundary of the billiard domain.

 In this paper we greatly extend previous results on boundedness of orbits (Broomhead and Gutkin) and linear stability of periodic orbits for a Sinai-type billiard (Wojtkowski)  for  no-slip billiards.  We show among other facts that: (i)  for billiard domains in the plane having piecewise smooth boundary and at least one corner of inner angle less than $\pi$,  no-slip billiard dynamics will always contain elliptic period-$2$ orbits; (ii) polygonal no-slip billiards always admit
  small invariant open sets and thus cannot be ergodic with respect to the canonical invariant billiard measure; (iii)  
  the no-slip version of a Sinai billiard  must contain linearly stable periodic orbits of period $2$ and,  more generally, we provide a curvature threshold at which a commonly occurring period-$2$ orbit shifts from being hyperbolic to being elliptic; (iv) finally, we make a number of observations concerning periodic orbits in  a class of polygonal billiards. 
}
\end{abstract}

\section{Introduction and main results}
No-slip billiard systems have   received so far very little attention   despite   some interesting   features  that  distinguish them from the much more widely studied standard billiards.  These non-standard types of billiards are discrete time systems
  in dimension $4$ (after taking the natural Poincar\'e section of a flow)  representing    a rotating disc with unit kinetic energy that moves freely in a billiard domain with piecewise smooth boundary.
Although not Hamiltonian, these systems are nevertheless time reversible and leave invariant the canonical billiard measure. They also exhibit  dynamical behavior that is in sharp contrast with standard billiards. For example, they very often contain  period-$2$ orbits
having small
elliptic islands.    These  regions exist amid chaos that appears, in numerical experiments, to result from the usual mechanisms of dispersing and focusing.
 
We are aware of only two   articles on this subject prior to our \cite{CF,CFII}: one by Broomhead and Gutkin \cite{gutkin} showing  that no-slip billiard orbits
in an infinite strip are bounded, and another by Wojtkowski, characterizing  linear stability 
for a special type of period-$2$ orbit.  
In this paper we extend their results as will be detailed shortly, and develop the basic theory of no-slip billiards in a more systematic way. In this section we explain the organization of the paper and highlight our main new results.

Section \ref{preliminaries}  gives preliminary information and sets notation and terminology concerning rigid collisions. 
It  specializes the general results    from \cite{CF} (stated in that paper in arbitrary dimension for bodies of general shapes and mass distributions) to discs in the plane with rotationally symmetric mass distributions. The main fact is briefly summarized in Proposition \ref{classification}.  Although the classification into specular and no-slip collisions is  the same as in  \cite{gutkin},  our approach is more differential geometric in style and may have some conceptual advantages.  For example, we
derive  this 
classification  (in \cite{CF}) from  an orthogonal decomposition
of the tangent bundle $TM$ restricted to the boundary $\partial M$
 (orthogonal relative to the kinetic energy Riemannian metric in the system's configuration manifold $M$)  into physically meaningful subbundles. This orthogonal decomposition is explained here only for discs in the plane.

By a  planar {\em no-slip billiard} system we mean a mechanical system in $\mathbb{R}^2$  in which one of the colliding bodies, which may have arbitrary shape,  is fixed in place, whereas the second, moving body is a disc with rotationally symmetric mass distribution;  post-collision velocities (translational and rotational) are determined from pre-collision velocities via the no-slip collision map and between consecutive collisions the bodies undergo free motion. Contrary to the standard case, the moving particle's mass distribution influences the collision properties (via an angle parameter which is denoted $\beta$ throughout the paper). 
The main definitions and notations concerning  no-slip billiard systems, in particular  the notions of {\em reduced phase space}, {\em velocity phase space}, and the {\em product, eigen-} and {\em wavefront} frames,  are introduced in Section
 \ref{noslip billiards definitions}.

 A special feature of no-slip billiards around which much of the present study is based, is the ubiquitous occurrence of 
 period-$2$ trajectories. The general description of these trajectories is given in Section \ref{periodic section}. In Section \ref{sec:differential} we obtain the   differential of the no-slip billiard map and show  the form it takes for period-$2$ trajectories. 
 
For  general collision systems  as considered in \cite{CF}  (satisfying energy and momenta conservation, time reversibility, involving impulsive  forces that can only act at the contact point between colliding bodies), the issue of characterizing smooth invariant measures still needs much further investigation, although it is shown there that the canonical (Liouville) measure is invariant if a certain field of subspaces defined in terms of the collision maps at each collision configuration $q\in \partial M$
is parallel with respect to the kinetic energy metric.  This is the case for planar no-slip billiards, so that the standard billiard measure is still invariant. (Note, however, that the configuration manifold is now $3$-dimensional.) A detailed proof of this fact, in addition to comments on time reversibility are given in  
 Section \ref{measure reversible}.
 
 Dynamics proper begins with Sections \ref{wedge section} and \ref{higher order polygon}, which are concerned with no-slip billiard systems in wedge-shaped regions and polygons. Here we generalize the main result from \cite{gutkin}. In that paper it is shown that orbits of the  no-slip billiard system  in an infinite strip domain are bounded. By extending and refining this fact to wedge regions
we obtain local stability for periodic orbits in  no-slip polygonal billiards. This is Theorem \ref{polygon stability}. We also give in Section \ref{higher order polygon} an exhaustive description of periodic orbits in wedge billiards. 

Finally, in Section \ref{curved section} we consider linear stability of period-$2$ orbits in the presence of curvature. Our results here extend those of \cite{W} for no-slip Sinai billiards.
Wojtkowski  makes in \cite{W} the following striking observation: for a special period-$2$ orbit in a Sinai billiard (corresponding in our study to angle $\phi=0$)
there is a parameter defined in terms of  the curvature of the circular scatterer that sets a threshold between elliptic and hyperbolic behavior.  
This is based on an analysis of the differential of the billiard map at the periodic trajectory. Here we derive similar results for period-$2$ orbits in general.
Although the analysis is purely linear, we observe the occurrence of elliptic islands and stable behavior in systems that are the no-slip counterpart of fully chaotic 
standard billiards. The fast transition between stability and chaos near the threshold set by the curvature parameter obtained from the linear analysis is also very apparent numerically. As already observed by Wojtkowski in \cite{W}, proving local stability in the presence of curvature would require a difficult  KAM analysis (for reversible, but not Hamiltonian systems;  cf. \cite{sevryuk}).

\section{Background on rigid collisions}\label{preliminaries}
For a much more general treatment of the material of this section (not restricted to discs and valid in arbitrary dimension) see \cite{CF}. See also \cite{gutkin,W}.
Let $x=(x_1, x_2)$ be the standard coordinates in $\mathbb{R}^2$. A mass distribution on a body $B\subset \mathbb{R}^2$ is defined by
a finite positive measure 
 $\mu_{\text{\tiny mass}}$  on  $B\subset \mathbb{R}^2$. We assume without loss of generality  that the first moments $\int_B x_i d\mu_{\text{\tiny mass}}(x)$ vanish. Let $m$ be the total mass: $m=\mu_{\text{\tiny mass}}(B)$.
The second moments of $\mu_{\text{\tiny mass}}$ (divided by $m$) will be denoted by $\ell_{rs}=\frac1m \int_B x_rx_s\, d\mu_{\text{\tiny mass}}(x)$.  When $B$ is a disc of radius $R$ centered at the origin of $\mathbb{R}^2$, it will be assumed that $\mu_{\text{\tiny mass}}$ is rotationally symmetric, in which case the symmetric matrix $L=(\ell_{rs})$ is scalar: $L=\lambda I$.  Also in this case, 
$0\leq \lambda\leq R^2/2$, where $0$ corresponds to all mass being concentrated at the origin and the upper bound corresponds to having all mass concentrated on the circle of radius $R$. For the  uniform mass distribution, $\lambda= R^2/4$. It will be useful to introduce the parameter $\gamma:=\sqrt{2\lambda}/R$.
The moment of inertia of a disc of radius $R$  is given in terms of $\gamma$ by $\mathcal{I}= m (\gamma R)^2$. 
For the uniform mass distribution on the ball
$\gamma=1/\sqrt{2}$. In general, $0\leq \gamma \leq 1$.

From now on $B$ will be the disc of radius $R$ in $\mathbb{R}^2$  centered at the origin.  A {\em configuration} of the moving disc  is an image of $B$
under a Euclidean isometry; it is parametrized by the coordinates of the center of mass $(y,z)$ of the image disc and its angle of orientation $\theta$. Introducing the new coordinate $x:= \gamma R \theta$ and denoting  by $v=(\dot{x},\dot{y}, \dot{z})$ the velocity vector in configuration space, the kinetic energy of the body takes the form $\frac1{2} m|v|^2$, where $|v|$ is the standard Euclidean norm.

We consider now a system of two discs with radii $R_1, R_2$,   denoted by $B_1, B_2$ when  in their reference configuration, that is, 
when centered at the origin of $\mathbb{R}^2$.
 The configuration manifold $M$  of the system, which is the set of all non-overlapping images of $B_i$ under Euclidean isometries of the plane, 
 is then the set
 $$M:=\left\{ (a_1, a_2)\in (\mathbb{R}^2\times \mathbb{T}_1)\times (\mathbb{R}^2\times \mathbb{T}_2): |\overline{a}_1-\overline{a}_2|\geq R_1+R_2 \right\},$$
 where
 $\mathbb{T}_i =  \mathbb{R}/(2\pi\gamma_i R_i)$ and 
  $\overline{a}$ indicates the coordinate  projection of $a$ in $\mathbb{R}^2$. 
 Given mass distributions $\mu_i$, $i=1,2$, the kinetic energy
 of the system at a state $(a_1, v_1, a_2, v_2)$ is $K=\frac12\left(m_1|v_1|^2+m_2|v_2|^2\right).$ The  manifold $M$ becomes a Riemannian manifold with boundary
 by endowing it with the {\em kinetic energy metric} $$\left\langle (u_1,u_2), (v_1, v_2)\right\rangle=m_1 u_1\cdot v_1 + m_2 u_2\cdot v_2$$
where the dot means ordinary inner product in $\mathbb{R}^3$.

\begin{figure}[htbp]
\begin{center}
%\epsfile{file=bundle.eps,scale=0.8}
\includegraphics[width=2 in]{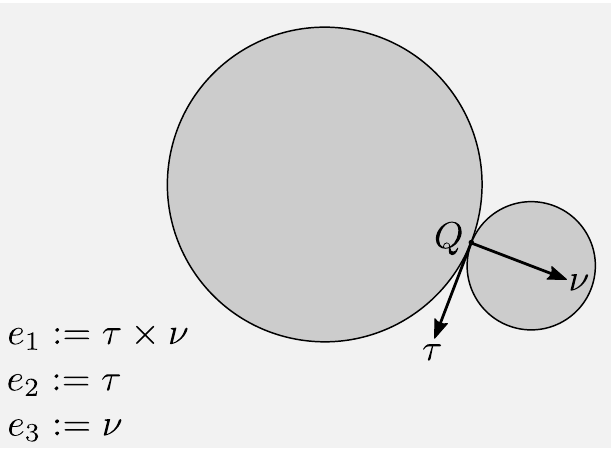}\ \ 
\caption{\small{Definition of the $(e_1, e_2, e_3)$ frame at the contact point $Q$. The unit normal vector $\nu$ points away from the center of (the image in the given configuration of) body $B_1$.}} 
%Away from boundaries motion is along geodesics; at boundary points a collision map must be assigned.}}
\label{frame_definition}
\end{center}
\end{figure} 

Let $\tau$ and $\nu$ denote the unit tangent and normal vectors 
at the contact point  $Q\in \mathbb{R}^2$ of the bodies in  a  boundary configuration $q\in \partial M$ as indicated in Figure \ref{frame_definition}.  We introduce the orthonormal frame $e_1, e_2, e_3$ of $\mathbb{R}^3$ as defined  in the figure. If the system is at a state $(a_1, v_1, a_2, v_2)$, then $Q$ will have velocity 
$V_i(Q)$  when regarded as a point in body $B_i$ in the given configuration $a_i$. One easily obtains
$$V_1(Q)=\left[v_1\cdot e_2 - \gamma_1^{-1}v_1\cdot e_1\right] e_2 + v_1\cdot e_3 e_3, \ \ V_2(Q)=\left[v_2\cdot e_2 - \gamma_2^{-1}v_2\cdot e_1\right] e_2 + v_2\cdot e_3 e_3. $$
The unit normal vector to $\partial M$  at the configuration $q=(a_1, a_2)$ pointing towards the interior of $M$ will be denoted $\mathbbm{n}_q$. Note that
this is defined with respect to the kinetic energy metric.  Explicitly, letting $m=m_1+m_2$,
$$\mathbbm{n}_q= \left(-\sqrt{\frac{m_2}{m_1 m}}e_3, \sqrt{\frac{m_1}{m_2 m}}e_3 \right). $$
We also define the {\em no-slip}  subspace $\mathfrak{S}_q$  of the tangent space to $\partial M$ at $q$:
$$ \mathfrak{S}_q=\left\{ (v_1, v_2): V_1(Q)=V_2(Q)\right\},$$
and the orthogonal complement to $\mathfrak{S}_q$ in $T_q\partial M$, which we write as $\overline{\mathfrak{C}}_q$. The orthogonal direct sum of the latter and the line spanned
by $\mathbbm{n}_q$ was denoted by $\mathfrak{C}_q$ in \cite{CF} and called the {\em impulse} subspace. These two spaces are given by
\begin{equation}\label{impulse_space}\overline{\mathfrak{C}}_q=\left\{\left(-\frac{1}{\gamma_1}  v_1\cdot e_2,   v_1\cdot e_2, 0; \frac1{\gamma_2} v_2\cdot e_2, v_2\cdot e_2,0\right): m_1 v_1 + m_2 v_2 =0 \right\} \end{equation}
and 
\begin{equation}\label{noslip_space}\mathfrak{S}_q=\left\{(v_1,v_2): (v_1-v_2)\cdot e_3=0, (v_1-v_2)\cdot e_2= \left(\frac{v_1}{\gamma_1}+\frac{v_2}{\gamma_2}\right)\cdot e_1\right\}. \end{equation}
Figure  \ref{orthogonal} shows typical vectors in these orthogonal subspaces. 

\begin{figure}[htbp]
\begin{center}
%\epsfile{file=bundle.eps,scale=0.8}
\includegraphics[width=5 in]{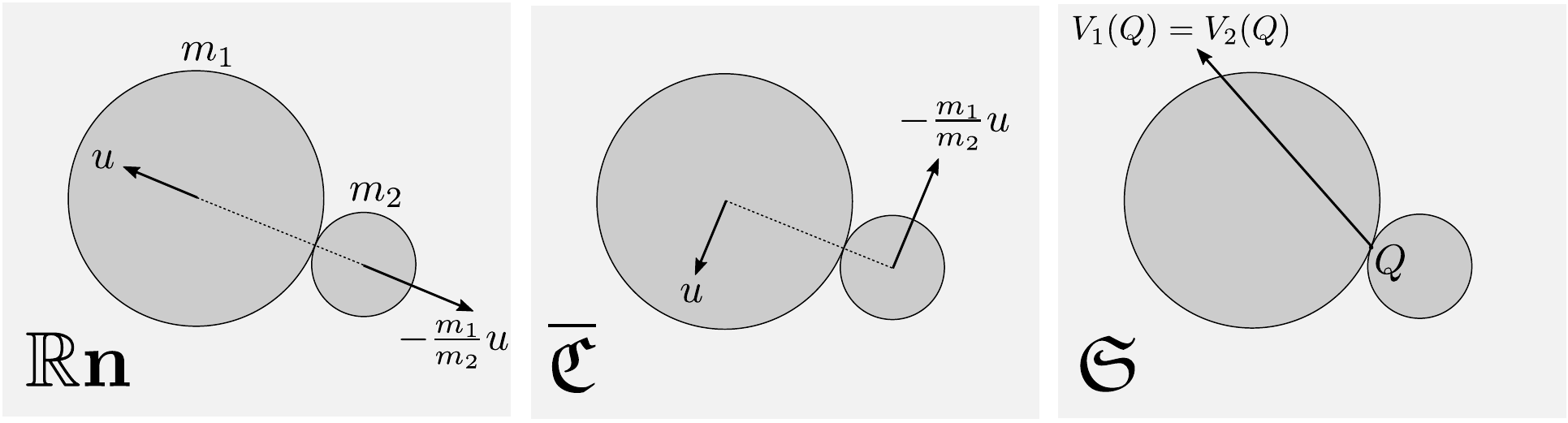}\ \ 
\caption{\small{Tangent vectors in $T_qM$ decompose orthogonally (relative to the kinetic energy metric) into vectors of the above three types. In the diagram on the left the bodies are not rotating, and in the middle diagram the rotation velocities are determined by the  center of mass velocities as given by (\ref{impulse_space}). }} 
%Away from boundaries motion is along geodesics; at boundary points a collision map must be assigned.}}
\label{orthogonal}
\end{center}
\end{figure} 

We can now 
 state in the present very special case one of the main results of \cite{CF}. See also \cite{gutkin}.
 In the interior of $M$, the motion of the system, in the absence of potential forces, is geodesic relative to the kinetic energy metric. In dimension two this
 amounts simply to constant linear and angular velocities. 
  For the motion   to be fully specified it is necessary to
 find  for any given  $v^-=(v^-_1, v^-_2)$ at $q\in \partial M$ such that $\langle v^-, \mathbbm{n}_q\rangle<0$ (representing the bodies' velocities immediately before 
 a collision) a $v^+$ such that $\langle v^+, \mathbbm{n}_q\rangle>0$ (representing the bodies' velocities immediately after). This correspondence should be given by a map
 $C_q: v^-\mapsto v^+$.  We call such correspondence a {\em collision map} $C_q$ at  $q\in \partial M$. 
 \begin{proposition}\label{classification}
 Linear collision maps $C_q:T_qM\rightarrow T_qM$ at $q\in \partial M$ describing energy preserving, (linear and angular) momentum preserving, time reversible collisions having the additional property that
 impulsive forces between the bodies can only act at  the single point of contact (denoted by $Q$ above)  are given  by  the linear orthogonal involutions  that restrict to the identity map on $\mathfrak{S}_q$ and send $\mathbbm{n}_q$ to its negative. Thus $C_q$ is fully determined by its restriction to $\overline{\mathfrak{C}}_q$, where
 it can only be (in dimension $2$) the identity or its negative.
 \end{proposition}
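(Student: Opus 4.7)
The plan is to translate each of the stated physical conditions into a linear-algebraic constraint on $C_q$ and then enumerate the surviving maps using the orthogonal decomposition $T_qM = \mathfrak{S}_q \oplus \overline{\mathfrak{C}}_q \oplus \mathbb{R}\mathbbm{n}_q$ with respect to the kinetic energy metric. Energy preservation, combined with linearity, gives $\|C_q v\| = \|v\|$ for every $v$, so $C_q$ is orthogonal. Time reversibility says that whenever $v^- \mapsto C_q v^-$ is admissible, so is $-C_q v^- \mapsto -v^-$; linearity of $C_q$ rewrites this as $C_q^2 = I$. Thus $C_q$ is an orthogonal involution.

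Next I would convert the impulse-localization hypothesis into the statement $C_q v - v \in \mathfrak{C}_q := \overline{\mathfrak{C}}_q \oplus \mathbb{R}\mathbbm{n}_q$ for every $v \in T_qM$. An impulse $J \in \mathbb{R}^2$ applied at $Q$ to body $1$ (together with $-J$ applied to body $2$, Newton's third law being what encodes linear and angular momentum conservation in this impulsive setting) produces a $2$-parameter family of velocity changes via $\Delta\overline v_i = \pm J/m_i$ and the torque identity $\mathcal{I}_i\Delta\omega_i = r_i \times (\pm J)$. Rewriting these in the scaled rotation coordinate $x_i = \gamma_i R_i \theta_i$ and matching against the formula (\ref{impulse_space}) for $\overline{\mathfrak{C}}_q$ and the explicit expression for $\mathbbm{n}_q$ identifies that family with $\mathfrak{C}_q$ exactly. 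For $v \in \mathfrak{S}_q = \mathfrak{C}_q^\perp$, writing $c = C_q v - v \in \mathfrak{C}_q$ and expanding $\|v + c\|^2 = \|v\|^2$ using $v \perp c$ forces $c = 0$, so $C_q|_{\mathfrak{S}_q} = I$. Orthogonality then forces $C_q$ to preserve the complementary subspace $\mathfrak{C}_q$.

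It remains to classify orthogonal involutions of the $2$-dimensional space $\mathfrak{C}_q$ satisfying the physical sign condition that $\langle v^-, \mathbbm{n}_q\rangle < 0$ implies $\langle C_q v^-, \mathbbm{n}_q\rangle > 0$. In an orthonormal basis $(\mathbbm{n}_q, \hat c)$ with $\hat c$ a unit vector in $\overline{\mathfrak{C}}_q$, the general reflection through the axis at angle $\alpha$ from $\mathbbm{n}_q$ gives $\langle C_q(a\mathbbm{n}_q + b\hat c), \mathbbm{n}_q\rangle = a\cos 2\alpha + b\sin 2\alpha$; demanding that this be positive for every $(a,b)$ with $a < 0$ forces $\sin 2\alpha = 0$ and $\cos 2\alpha < 0$, selecting the single reflection that negates $\mathbbm{n}_q$ and fixes $\hat c$. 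The only other orthogonal involution of $\mathfrak{C}_q$ satisfying the sign condition is $-I$. Both surviving maps send $\mathbbm{n}_q \mapsto -\mathbbm{n}_q$ and restrict to $+I$ or $-I$ on $\overline{\mathfrak{C}}_q$, which is precisely the stated dichotomy.

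The main obstacle is the identification of the set of impulse-induced velocity changes with $\mathfrak{C}_q$; the rest is a short two-dimensional enumeration. Although the impulse-response calculation is routine, it requires careful bookkeeping of signs, the scaled rotation coordinate, the moments of inertia $\mathcal{I}_i = m_i \gamma_i^2 R_i^2$, and the opposite signs of $\nu$ relative to the centers of the two bodies; any slip in conventions propagates through the remainder of the argument. Once this identification is in place, the proposition follows at once.
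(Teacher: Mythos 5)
Your proposal is correct and follows essentially the same route the paper takes: the orthogonal decomposition $T_qM=\mathfrak{S}_q\oplus\overline{\mathfrak{C}}_q\oplus\mathbb{R}\mathbbm{n}_q$ with respect to the kinetic energy metric, with energy conservation giving orthogonality, reversibility giving the involution property, and the impulse-at-$Q$ hypothesis identifying the space of achievable velocity changes with the $2$-dimensional $\mathfrak{C}_q$ (the paper itself states the result and defers the detailed argument to \cite{CF}, but this decomposition is exactly the mechanism it describes). The one step you leave as a plan rather than a computation, namely matching the impulse-response family $\Delta\overline v_i=\pm J/m_i$, $\mathcal{I}_i\Delta\omega_i=r_i\times(\pm J)$ against (\ref{impulse_space}) and $\mathbbm{n}_q$ in the scaled coordinates $x_i=\gamma_iR_i\theta_i$, does check out, so no gap remains.
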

 We refer to \cite{CF} for a more detailed explanation of this result. The key point for our present purposes is that, in addition to the standard reflection map given
 by specular reflection with respect to the kinetic energy metric, there is only one (under the stated assumptions) other map, which we refer to as the {\em no-slip collision map.}

The explicit form of the no-slip collision map $C_q$ can thus be obtained by first decomposing the pre-collision vector $v^-$ according to the   orthogonal decomposition
$ \mathfrak{S}_q\oplus \mathfrak{C}_q$ and then switching the sign of the $\mathfrak{C}_q$ component to obtain $v^+$. If $\Pi_q$ is the orthogonal projection to $\mathfrak{C}_q$, then
$C_q=I-2\Pi_q.$ Setting $[v]=[(v_1,v_2)]:=\left(v_1\cdot e_1, v_1\cdot e_2, v_1\cdot e_3, v_2\cdot e_1, v_2\cdot e_2, v_2\cdot e_3\right)^t,$ the pre- and post-collision velocities are related by $[v^+] = [C_q] [v^-]$ where 

$$
\begingroup
\renewcommand*{\arraystretch}{1.5}
[C_q]=\left(\begin{array}{cccccc}1-\frac{\delta}{m_1\gamma_1^2} & \frac{\delta}{m_1\gamma_1} & 0 & -\frac{\delta}{m_1\gamma_1\gamma_2} & -\frac{\delta}{m_1\gamma_1} & 0 \\\frac{\delta}{m_1\gamma_1} & 1-\frac{\delta}{m_1} & 0 & \frac{\delta}{m_1\gamma_2} & \frac{\delta}{m_1} & 0 \\0 & 0 & 1-\frac{2 m_2}{m} & 0 & 0 & \frac{2m_2}{m} \\-\frac{\delta}{m_2\gamma_1\gamma_2} & \frac{\delta}{m_2\gamma_2} & 0 & 1-\frac{\delta}{m_2\gamma_2} & -\frac{\delta}{m_2\gamma_2} & 0 \\-\frac{\delta}{m_2\gamma_1} & \frac{\delta}{m_2} & 0 & -\frac{\delta}{m_2\gamma_2} & 1-\frac{\delta}{m_2} & 0 \\0 & 0 & \frac{2m_1}{m} & 0 & 0 & 1-\frac{2 m_1}{m}\end{array}\right).
\endgroup
$$
Here  $m=m_1+m_2$ and $\delta= 2\left\{ \frac1{m_1}\left[1+\frac1{\gamma_1^2}\right]+ \frac1{m_2}\left[1+\frac1{\gamma_2^2}\right]\right\}^{-1}.$ We record  two special cases. First suppose that the two discs have the same mass distribution and    $\gamma=\gamma_i$. Then
$$
\begingroup
\renewcommand*{\arraystretch}{1.5}
[C_q]= \left(\begin{array}{cccccc}\frac{\gamma^2}{1+\gamma^2} & \frac{\gamma}{1+\gamma^2} & 0 & -\frac{1}{1+\gamma^2} & -\frac{\gamma}{1+\gamma^2} & 0 \\\frac{\gamma}{1+\gamma^2} & \frac{1}{1+\gamma^2} & 0 & \frac{\gamma}{1+\gamma^2} & \frac{\gamma^2}{1+\gamma^2} & 0 \\0 & 0 & 0 & 0 & 0 & 1 \\-\frac{1}{1+\gamma^2} & \frac{\gamma}{1+\gamma^2} & 0 & \frac{\gamma^2}{1+\gamma^2} & -\frac{\gamma}{1+\gamma^2} & 0 \\-\frac{\gamma}{1+\gamma^2} & \frac{\gamma^2}{1+\gamma^2} & 0 & -\frac{\gamma}{1+\gamma^2} & \frac{1}{1+\gamma^2} & 0 \\0 & 0 & 1 & 0 & 0 & 0\end{array}\right)
\endgroup
$$

 The second case of interest assumes that one body, say $B_1$ is fixed in place. 
It makes sense to pass to the  limit in which the mass and moment of inertia of $B_1$ approach infinity and its   velocity (which  does not  change during collision process
as a quick inspection of $C_q$ shows) is set equal  to $0$. In this case only the velocity of
$B_2$ changes and we write $m = m_2, v=v_2, \gamma=\gamma_2$ and $v=v_2$. Then
$$\mathbbm{n}_q=\left(0, 0, m^{-1/2}\right), \ \ \mathfrak{S}_q= \left\{(-\gamma s,  s, 0): s\in \mathbb{R}\right\}, \ \ \overline{\mathfrak{C}}_q= \left\{(s, \gamma s, 0): s\in \mathbb{R}\right\} $$
where vectors are expressed in the frame $(e_1, e_2, e_3)$.  
The (lower right block of the) matrix $[C_q]$ is now
\begin{equation}\label{collision_map}\begingroup
\renewcommand*{\arraystretch}{1.5}
[C_q]= \left(\begin{array}{ccr}-\frac{1-\gamma^2}{1+\gamma^2} & -\frac{2\gamma}{1+\gamma^2} & 0 \\-\frac{2\gamma}{1+\gamma^2} & \ \, \, \frac{1-\gamma^2}{1+\gamma^2} & 0 \\0 & 0 & -1\end{array}\right)=\left(\begin{array}{ccc}-\cos\beta & -\sin\beta & 0 \\-\sin\beta &\ \, \cos\beta & 0 \\0 & 0 & -1\end{array}\right).
\endgroup \end{equation}
As noted earlier, $\gamma=\tan^2(\beta/2)$ can take any value between $0$ and $1$ (equivalently, $0\leq \beta\leq \pi/2$) in dimension $2$;   thus it makes sense to define the angle $\beta$ as we did above.
When the mass distribution is uniform, $\gamma=1/\sqrt{2}$ and
$\cos\beta=1/3$, $\sin\beta=2\sqrt{2}/3$.
Notice that these expressions still hold regardless of the shape of the fixed body $B_1$.   In what follows we denote the above $3$-by-$3$  matrix by $\mathcal{C}:=[C_q]$.

\section{No-slip planar billiards}\label{noslip billiards definitions}
We focus attention on the  last case indicated at the end of Section \ref{preliminaries}, which we call a planar no-slip {\em billiard}: the billiard table is the complement of the fixed body $B_1$,   now any set in $\mathbb{R}^2$ with non-empty interior and piecewise  smooth boundary,  and the billiard ball is the disc $B_2$ of radius $R$ with a rotationally symmetric mass distribution. The configuration manifold is then the set  $M$ consisting of all $q=(\overline{q},x)\in \mathbb{R}^2\times \mathbb{T}$  for which the distance between $\overline{q}$ and 
$B_1$ is at least $R$. We define $\mathbb{T}:=\mathbb{R}/(2\pi\gamma R)$. The projection $q\mapsto \overline{q}$ maps $M$ onto the {\em billiard table}, denoted $\mathcal{B}$. The boundary of $M$ is $\partial \mathcal{B}\times \mathbb{T}$ and the frame $(e_1(q), e_2(q), e_3(q))$  at $q\in \partial M$ is as indicated in Figure \ref{frame}.
We also view this $q$-dependent frame as the orthogonal map $\sigma_q:\mathbb{R}^3\rightarrow T_q\mathbb{R}^3$ that sends the standard basis vectors $\epsilon_i$ of $\mathbb{R}^3$ to $e_i(q)$, for $i=1, 2, 3$. This allows us to write $C_q=\sigma_q\mathcal{C}\sigma^{-1}_q $ for each $q$. 

Due to energy conservation, the norms of velocity vectors are not changed during collision or during the free motion between collisions; we restrict attention to vectors of unit length (in the kinetic energy metric, which agrees with the standard Euclidean metric in $\mathbb{R}^3$ under the choice of coordinate $x= \gamma R \theta$ for the disc's angle of orientation).

\begin{figure}[htbp]
\begin{center}
%\epsfile{file=bundle.eps,scale=0.8}
\includegraphics[width=3.0 in]{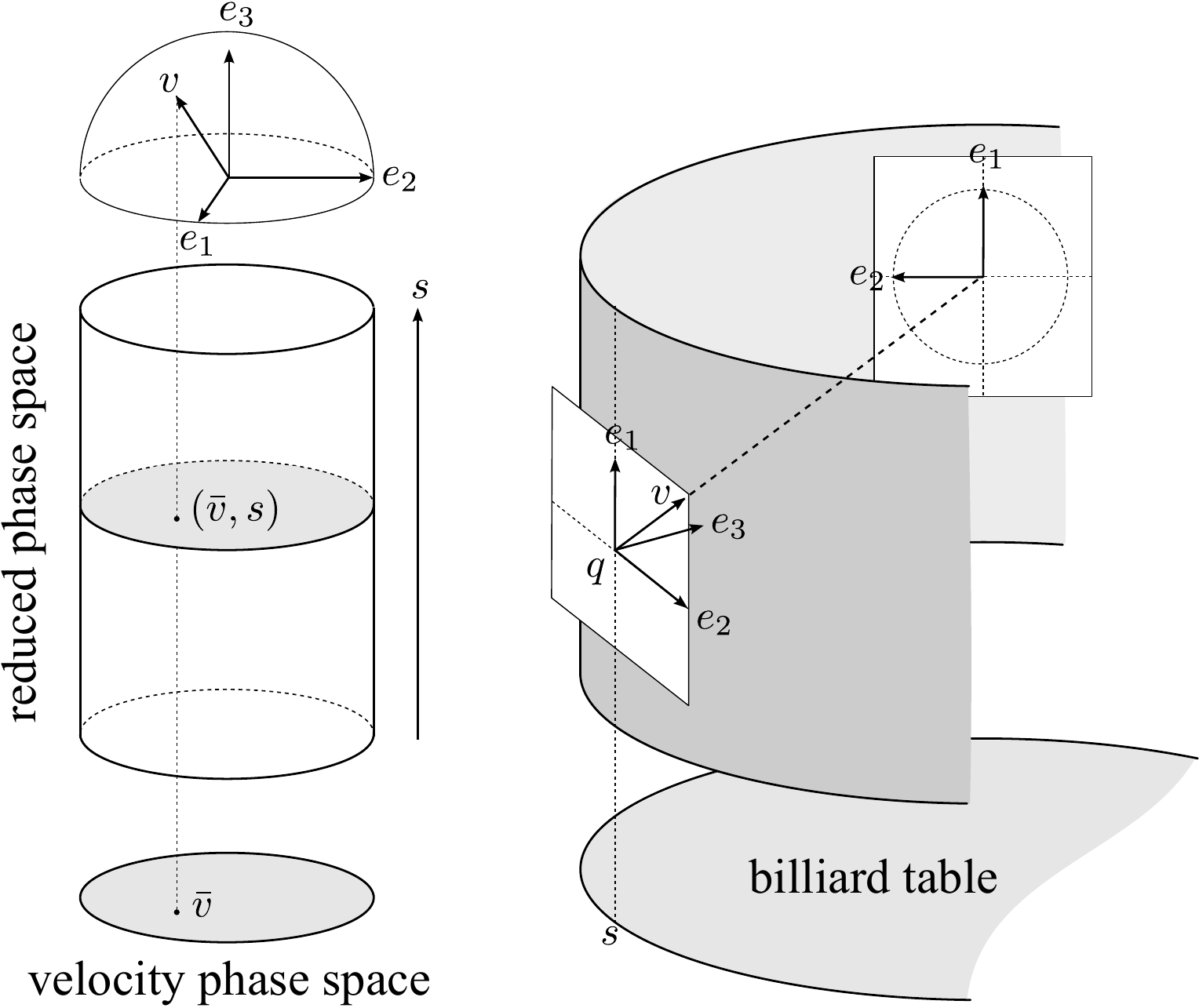}\ \ 
\caption{\small{Definition of the product frame $(e_1, e_2, e_3)$, the reduced phase space, and the velocity phase space.}} 
%Away from boundaries motion is along geodesics; at boundary points a collision map must be assigned.}}
\label{frame}
\end{center}
\end{figure}

 Throughout the paper the  notations $\langle u,v\rangle$ and $u\cdot v$  are both used for the standard inner product in $\mathbb{R}^3$, the choice   
 being a matter of typographical convenience.  (Recall that the kinetic energy metric has been reduced to the standard inner product in $\mathbb{R}^3$ by our definition of the variable $x$.)
The {\em phase space} of the billiard system is $$N:=N^+:=\{(q,v)\in T\mathbb{R}^3: q\in \partial M, |v|=1, v\cdot e_3(q)>0\}.$$ 
We refer to vectors in $N^+$ as {\em post-collision} velocities; we similarly define the space $N^-$ of {\em pre-collision} velocities. 
The {\em billiard map} $T$, whose domain is a subset of  $N$, is the composition of the free motion between two points $q_1, q_2$ in  $\partial M$ and the billiard map $C_{q_2}$ at the endpoint.
Thus   $T:N\rightarrow N$ is given by
$$ (\tilde{q},\tilde{v})=T(q,v)=(q+tv, C_{\tilde{q}}v)$$
where $t:=\inf \{s>0: q+sv\in N\}$. (We assume that the shape of the billiard table $\mathcal{B}$ is such that $T$ makes sense 
and is smooth for all $\xi$ in some big subset of $N$, say open of full Lebesgue measure.)
 Let 
$$\xi=(q,v)\mapsto \tilde{\xi}_-=(\tilde{q},v)\mapsto \tilde{\xi}=\tilde{\xi}_+=(\tilde{q}, C_{\tilde{q}}v). $$
The first map in this composition, which we denote by $\Phi$, is parallel translation of $v$ from $q$ to $\tilde{q}$,  and the second map, denoted $C$, applies the no-slip reflection map to  the translated vector, still denoted $v$, at $\tilde{q}$.  Hence $T=C\circ \Phi$.

\begin{figure}[htbp]
\begin{center}
%\epsfile{file=bundle.eps,scale=0.8}
\includegraphics[width=4.5 in]{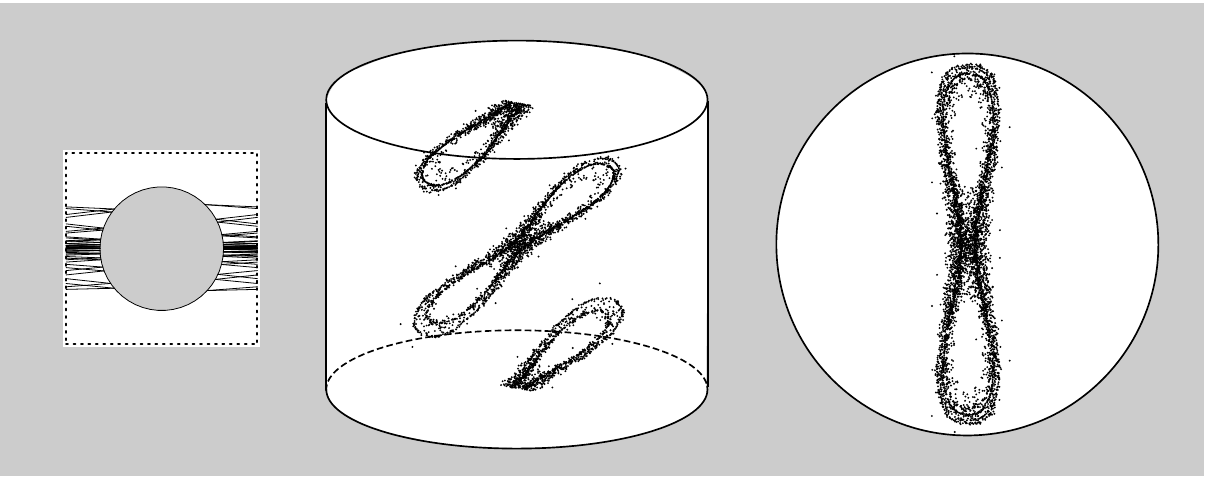}\ \ 
\caption{\small{Left: projection from $\mathbb{R}^3$  to $\mathbb{R}^2$ of an orbit of the no-slip Sinai billiard, to be discussed in more detail later on. Middle: the same orbit shown in the reduced phase space and, on the right, in the velocity phase space.}} 
%Away from boundaries motion is along geodesics; at boundary points a collision map must be assigned.}}
\label{example}
\end{center}
\end{figure} 

 Taking into account the rotation symmetry of the moving disc, we may for most purposes
ignore the angular coordinate (but not the angular velocity!) and 
 restrict attention to the {\em reduced phase space}. This is defined as $\partial\mathcal{B}\times \{u\in \mathbb{R}^2:|u|<1\}$, where an element $u$ of the unit disc represents the velocity vector at $q\in \partial \mathcal{B}$ (pointing into the billiard region) given by $$\sigma_q \left(u_1, u_2, \sqrt{1-|u|^2}\right)=u_1 e_1(q)+u_2 e_2(q)+\sqrt{1-|u|^2} e_3(q).$$ By {\em velocity phase space} we mean this unit disc. Figure \ref{frame} summarizes these definitions and Figure \ref{example} shows what an orbit segment  
looks like in each space. On the left of the latter figure is shown the two dimensional projection of the orbit segment defined in the $3$-dimensional space $M$. 

The rotation symmetry that justifies passing from the $4$-dimensional phase space  to the $3$-dimensional  reduced phase space may be formally expressed by the identity   $$T(q+\lambda e_1, v)= T(q,v)+ \lambda e_1.$$ Keep in mind that  $e_1$ is a parallel vector field (independent of $q$). In particular, $e_1$ is invariant under the differential map: $dT_\xi e_1 = e_1$ for all  $\xi=(q,v)$.

In addition to the orthonormal frames $\sigma_q$ it will be useful to introduce a frame consisting of eigenvectors of the collision map $C_q$.  We define
\begin{align}\label{u in e}
\begin{split}
u_1(q) &= \sin(\beta/2) e_1(q) - \cos(\beta/2) e_2(q)\\
u_2(q) & = \cos(\beta/2) e_1(q) + \sin(\beta/2) e_2(q)\\
u_3(q) &= e_3(q).
\end{split}
\end{align}
See Figure \ref{eigenframe}.
Then $$C_q u_1(q) = u_1(q), \ \ C_q u_2(q)=-u_2(q), \ \ C_q u_3(q)= -u_3(q).$$

Yet a third orthonormal frame will prove useful later on in our analysis of period-$2$ trajectories.
Let $\xi=(q,v)\in N$. Then $w_1(\xi), w_2(\xi), w_3(\xi)$ is the orthonormal frame at $q$ such that  
$$w_1(\xi):=\frac{e_1(q) - e_1(q)\cdot v v}{|e_1(q) - e_1(q)\cdot v v|}, \ \ w_2(\xi):=v\times w_1(\xi),\ \ w_3(\xi):=v.$$
Note that $w_1(\xi)$ and $w_2(\xi)$ span the $2$-space perpendicular to $v$. 

\begin{figure}[htbp]
\begin{center}
%\epsfile{file=bundle.eps,scale=0.8}
\includegraphics[width=4.5 in]{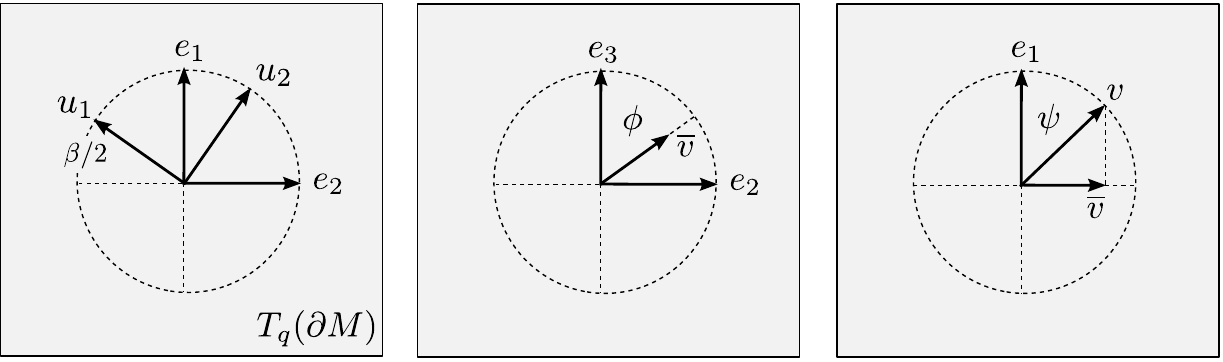}\ \ 
\caption{\small{Some angle and frame definitions: the  $q$-dependent product frame $(e_i(q))$, the eigenframe $(u_i)$ for the collision map $C_q$ at a collision configuration $q\in \partial M$,  the characteristic angle $\beta$ (a function of the mass distribution of the disc), and the angles $\phi(q,v)$ and $\psi(q,v)$. }} 
%Away from boundaries motion is along geodesics; at boundary points a collision map must be assigned.}}
\label{eigenframe}
\end{center}
\end{figure}

\begin{definition}[Special orthonormal frames]\label{frames} For any given $\xi=(q,v)\in N$ we refer to   $$(e_1(q), e_2(q), e_3(q)), \ \ (u_1(q), u_2(q), u_3(q)), \ \ 
(w_1(\xi), w_2(\xi), w_3(\xi))$$ as the {\em product frame}, the {\em eigenframe}, and the {\em wavefront frame}, respectively.
\end{definition}

\section{Period-$2$  orbits}\label{periodic section}
It appears  to be  a harder problem in general to show the   existence of periodic orbits for no-slip billiards than it is for standard billiard systems in dimension $2$, despite numerical evidence that such points are common. A few  useful observations can still be made for specific shapes of $\mathcal{B}$. We begin here with the general description of period-$2$ orbits. The reader should bear in mind that, when we represent billiard orbits 
in figures such as \ref{near_periodic}, we often 
draw their projections on the plane, even though periodicity refers to a property of orbits in the $3$-dimensional reduced phase space. 

\begin{figure}[htbp]
\begin{center}
%\epsfile{file=bundle.eps,scale=0.8}
\includegraphics[width=4.5 in]{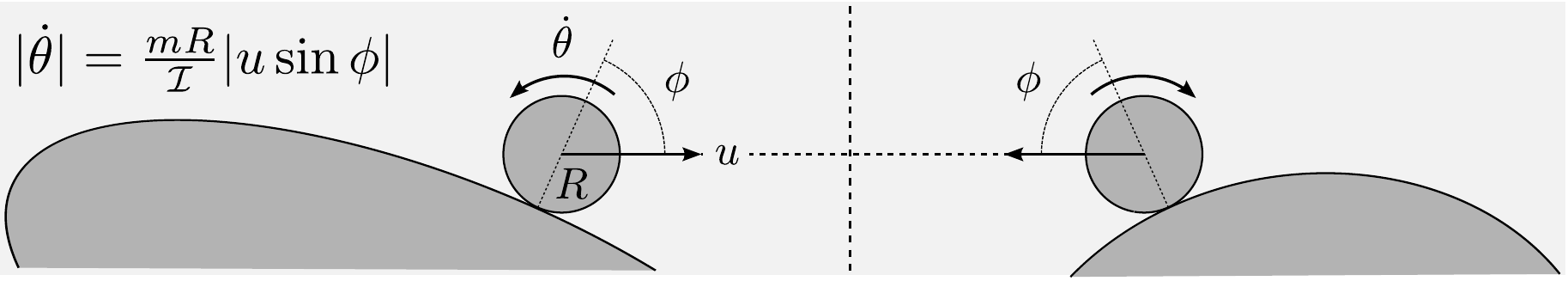}\ \ 
\caption{\small{Period $2$ orbit. The indicated parameters are: the disc's mass $m$, its moment of inertia  $\mathcal{I}$, and radius $R$. The velocity of the center of mass is $u$ and the angular velocity is $\dot{\theta}$.} }
%Away from boundaries motion is along geodesics; at boundary points a collision map must be assigned.}}
\label{period2}
\end{center}
\end{figure}

Let $\xi=(q,v)$ be the initial state of a periodic orbit of period $2$, $\tilde{\xi}=(\tilde{q}, \tilde{v})=T(\xi)$, and $t$ the time of free flight between collisions. Then, clearly,
$$(q,v)=\left(\tilde{q} + tC_{\tilde{q}}v, C_q\tilde{v}\right) =\left(q+ t(v+C_{\tilde{q}}v), C_qC_{\tilde{q}}v\right)$$  
so that $C_{\tilde{q}}v=-v$ and $v=C_qC_{\tilde{q}}v$. Because $v$ and $u_1(q)$ (respectively, $u_1(\tilde{q})$) are eigenvectors for different eigenvalues  of  the orthogonal map $C_q$ (respectively, $C_{\tilde{q}}$), $v$ is perpendicular to both $u_1(q)$ and $u_1(\tilde{q})$. It follows from   (\ref{u in e}) that $u_1(q)\cdot e_1=u_1(\tilde{q})\cdot e_1$.
Thus the projection of $e_1$ to $v^\perp$ is proportional to $u_1(q)+u_1(\tilde{q})$. By the definition of the wavefront vector $w_1(\xi)$ (and the angle $\phi$, cf. Figure \ref{eigenframe}) we have
$$w_1(\xi)=w_1(\tilde{\xi}) =\frac{u_1(q)+u_1(\tilde{q})}{|u_1(q)+u_1(\tilde{q})|}=\frac{u_1(q)+u_1(\tilde{q})}{2\sqrt{1-\cos^2(\beta/2)\cos^2\phi}}. $$
Now observe that $u_1(\tilde{q})-u_1(q)$ is perpendicular to $u_1(q)+u_1(\tilde{q})$. It follows from this remark and a glance at Figure \ref{eigenframe} (to determine the orientation of the vectors) that 
$$w_2(\xi)=-w_2(\tilde{\xi})= \frac{u_1(\tilde{q})-u_1(q)}{|u_1(\tilde{q})-u_1(q)|}=\frac{u_1(\tilde{q})-u_1(q)}{2 \cos(\beta/2)\cos\phi}.$$
Notice, in particular, that $v$ is a positive multiple of $u_1(q)\times u_1(\tilde{q})$. 
An elementary calculation starting from this last observation gives $v$ in terms of the product frame: 
\begin{equation*}
v =\frac{\cos(\beta/2)\sin\phi e_1 + \sin(\beta/2)\left[ \sin\phi e_2(q) +  \cos\phi e_3(q)\right]}{\sqrt{1-\cos^2(\beta/2)\cos^2\phi}}.
\end{equation*}
A more physical description of the velocity $v$ of a period-$2$ orbit is shown in Figure \ref{period2} in terms of the moment of inertia $\mathcal{I}$.

 Equally elementary computations yield   the collision map $C_q$ in the wavefront frame at $q$, for a period-$2$ state $\xi=(q,v)$.  
We register this here  for later use. To shorten the equations we write $c_{\beta/2}=\cos(\beta/2)$ and $c_\phi = \cos\phi$.
\begin{align}\label{Cwavefront}
\begin{split}
C_q w_1(\xi) &= \left(1- 2 c_{\beta/2}^2 c^2_\phi\right) w_1(\xi) -2c_{\beta/2} c_\phi \sqrt{1-c^2_{\beta/2} c^2_\phi} w_2(\xi)\\
C_q w_2(\xi) & = -2 c_{\beta/2} c_\phi\sqrt{1-c^2_{\beta/2} c^2_\phi} w_1(\xi) - \left(1-2c^2_{\beta/2}  c^2_\phi\right) w_2(\xi)\\
C_q w_3(\xi) & = -w_3(\xi).
\end{split}
\end{align}

The following easily obtained inner products will also be needed later.
\begin{align}\label{inner}
\begin{split}
u_1(\tilde{q})\cdot u_1({q})&=1-2\cos^2(\beta/2)\cos^2\phi\\
 w_1(\xi)\cdot u_1(q)&=\sqrt{1-\cos^2(\beta/2)\cos^2\phi}\\
  w_2(\xi)\cdot u_1(q)&= -\cos(\beta/2)\cos\phi. 
  \end{split}
  \end{align}

Figure \ref{sinai}  shows (two copies of the fundamental domain of) the configuration manifold of the no-slip Sinai billiard. Here the  billiard table is the complement of a circular scatterer in a two-dimensional torus and $M$ is the cartesian  product of the latter with a one-dimensional torus. Notice that there is a whole one-parameter family of initial conditions giving period-$2$ orbits, parametrized by the angle $\phi$. We obtain infinitely many such families by choosing different pairs of fundamental domains.

\begin{figure}[htbp]
\begin{center}
%\epsfile{file=bundle.eps,scale=0.8}
\includegraphics[width=3.5 in]{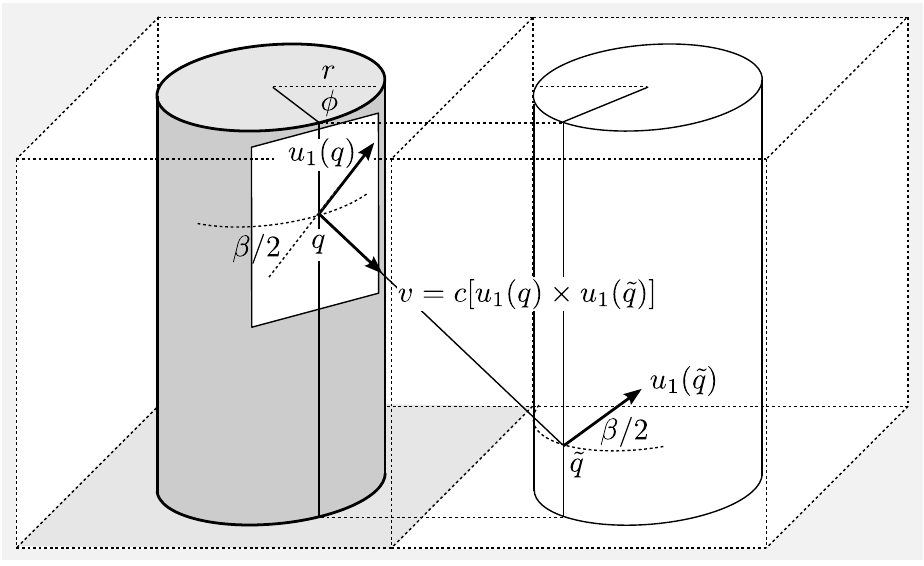}\ \ 
\caption{\small{The figure shows two fundamental domains of the no-slip Sinai billiard and an initial velocity $v$ of a periodic orbit with period $2$. This trajectory lies in a one-parameter family of period-$2$ trajectories parametrized by the angle $\phi$.}}
\label{sinai}
\end{center}
\end{figure}

We will return to the no-slip Sinai billiard in Section \ref{curved section}.

\section{The differential of the no-slip billiard map}\label{sec:differential}
Mostly, in this section,  we write    $\langle u,v\rangle$ instead of  $u\cdot v$ for the standard inner product of $\mathbb{R}^3$.
Let $q(s)$ be a smooth curve in $\partial M$ such that $q(0)=q$ and $q'(0)=X\in T_q(\partial M)$. Define
$$ \omega_q(X):= \left. \frac{d}{ds}\right|_{s=0} \sigma(q(0))^{-1}\sigma(q(s))\in \mathfrak{so}(3)$$
where  $\mathfrak{so}(3)$ is  the space of antisymmetric $3\times 3$ matrices (the Lie algebra of the rotation group) and $\sigma(q):=\sigma_q$ is the product frame.
 As $e_1$ is a parallel field and $\omega_q(X)$ is antisymmetric
we have $\omega_q(X)_{ij}=0$ except possibly for $(i,j)=(2,3)$ and $(3,2)$. 
Denoting by $D_X$ directional derivative  of vector fields along $X$ at $q$, 
$$\omega_q(X)_{23}=\epsilon_2 \cdot \left[ \left. \frac{d}{ds}\right|_{s=0} \sigma(q(0))^{-1}\sigma(q(s))\epsilon_3\right]= \left\langle e_2(q), D_X e_3\right\rangle= \left\langle e_2(q), X\right\rangle \left\langle e_2(q), D_{e_2(q)}e_3\right\rangle$$
since $D_{e_1}e_3=0$. The inner product $\kappa(q):= \left\langle e_2(q), D_{e_2(q)}e_3\right\rangle$ is the geodesic curvature of the boundary of $\mathcal{B}$ at $\bar{q}$, where $\bar{q}$ is the base point of $q$ in
$\partial \mathcal{B}$.  Thus 
\begin{equation}\label{curvature} \omega_q(X)=\kappa(q) \langle e_2(q), X\rangle \mathcal{A}\end{equation}
where $$\mathcal{A}= \left(\begin{array}{ccc}0 & 0 & 0 \\0 & 0 & 1 \\0 &\!\! -1 & 0\end{array}\right).$$

Given  vector fields $\mu, \nu$, we define   $\mu\odot \nu$ as the map 
\begin{equation}\label{odot} (q, v)\mapsto (\mu\odot\nu)_qv:=\langle \mu_q, v\rangle \nu_q + \langle \nu_q, v\rangle \mu_q.\end{equation}

\begin{lemma}\label{curvlemma}
The directional derivative of   $C$ along $X\in T_q(\partial M)$ is  
$$ D_XC= \kappa(q) \left\langle e_2(q), X\right\rangle \mathcal{O}_q$$
where $\mathcal{O}_q=\sigma_q \mathcal{O}\sigma_q^{-1}$, 
 $$\mathcal{O}:=[\mathcal{A},\mathcal{C}]=2\cos(\beta/2)\left(\begin{array}{ccc}0 & 0 & \ \, \sin(\beta/2) \\0 & 0 & -\cos(\beta/2) \\\sin(\beta/2) & -\cos(\beta/2) & 0\end{array}\right) $$
and $\mathcal{C}$ was defined above in (\ref{collision_map}). Furthermore, 
$\mathcal{O}_q=2\cos(\beta/2) (u_1\odot e_3)_q$ and
$$ D_X C = 2 \cos(\beta/2)\kappa(q) \langle X, e_2\rangle_q (u_1\odot e_3)_q.$$
\end{lemma}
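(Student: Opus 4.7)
The plan is to exploit the decomposition $C_q=\sigma_q\mathcal{C}\sigma_q^{-1}$, in which the entire $q$-dependence lives in the frame $\sigma_q$ while $\mathcal{C}$ is a fixed matrix. Differentiating along $X\in T_q(\partial M)$ and using $D_X(\sigma_q^{-1})=-\sigma_q^{-1}(D_X\sigma)\sigma_q^{-1}$, I get
\begin{equation*}
D_X C=(D_X\sigma)\,\mathcal{C}\,\sigma_q^{-1}-\sigma_q\mathcal{C}\,\sigma_q^{-1}(D_X\sigma)\sigma_q^{-1}=\sigma_q\bigl[\sigma_q^{-1}D_X\sigma,\,\mathcal{C}\bigr]\sigma_q^{-1}=\sigma_q\bigl[\omega_q(X),\mathcal{C}\bigr]\sigma_q^{-1},
\end{equation*}
using the very definition of $\omega_q(X)$. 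The formula (\ref{curvature}) then pulls the scalar factor $\kappa(q)\langle e_2(q),X\rangle$ outside and reduces the problem to computing a single commutator, $[\mathcal{A},\mathcal{C}]$, and identifying the result as $\mathcal{O}$.

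Next, I would carry out the matrix product directly from the explicit forms of $\mathcal{A}$ and $\mathcal{C}$ in (\ref{collision_map}). Both $\mathcal{A}\mathcal{C}$ and $\mathcal{C}\mathcal{A}$ are sparse: after subtracting, the only non-zero entries sit in positions $(1,3)$, $(3,1)$, $(2,3)$, $(3,2)$, and read $\sin\beta$, $\sin\beta$, $-(1+\cos\beta)$, $-(1+\cos\beta)$ respectively. Applying the half-angle identities $\sin\beta=2\sin(\beta/2)\cos(\beta/2)$ and $1+\cos\beta=2\cos^2(\beta/2)$ factors out an overall $2\cos(\beta/2)$ and yields the claimed form of $\mathcal{O}$. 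Conjugating by $\sigma_q$ turns this into $\mathcal{O}_q$.

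For the rewriting $\mathcal{O}_q=2\cos(\beta/2)(u_1\odot e_3)_q$, I would work in the product frame, where $u_1$ has coordinates $(\sin(\beta/2),-\cos(\beta/2),0)$ and $e_3$ has coordinates $(0,0,1)$, by the definition (\ref{u in e}). The symmetric rank-two map $u_1\odot e_3$ is then represented by $u_1 e_3^{t}+e_3 u_1^{t}$, whose only non-zero entries are $\sin(\beta/2)$ in positions $(1,3)$ and $(3,1)$, and $-\cos(\beta/2)$ in positions $(2,3)$ and $(3,2)$. Multiplying by $2\cos(\beta/2)$ reproduces $\mathcal{O}$ on the nose. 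Because $\odot$ is defined frame-invariantly via inner products in (\ref{odot}), conjugating by $\sigma_q$ and combining with the previous step gives the final displayed formula
\begin{equation*}
D_X C=2\cos(\beta/2)\,\kappa(q)\,\langle X,e_2\rangle_q\,(u_1\odot e_3)_q.
\end{equation*}

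I do not anticipate any genuine obstacle: the argument is a chain of three short manipulations (differentiating a conjugation, evaluating one commutator, recognizing a rank-two symmetric tensor). The only point that requires a touch of care is bookkeeping the difference between derivatives acting on the endomorphism $C$ itself versus on a vector $v$ pushed through it — but since $\mathcal{C}$ is constant, the product rule reduces cleanly to the commutator above, and no term involving $D_X v$ survives because $D_X$ here is the derivative of $C$ as a $q$-dependent linear map, evaluated before plugging in a vector.
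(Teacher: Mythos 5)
Your proposal is correct and follows essentially the same route as the paper: differentiating the conjugation $C_q=\sigma_q\mathcal{C}\sigma_q^{-1}$ to obtain $\sigma_q[\omega_q(X),\mathcal{C}]\sigma_q^{-1}$, invoking (\ref{curvature}) to extract the scalar $\kappa(q)\langle e_2(q),X\rangle$, and then verifying the commutator and the identification $\mathcal{O}=2\cos(\beta/2)(u_1\odot e_3)$ by direct computation in the product frame. The entries you list for $[\mathcal{A},\mathcal{C}]$ and the half-angle reductions all check out.
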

\begin{proof}
Notice that
$ 0 = D_X I= D_X(\sigma^{-1}\sigma)= (D_X\sigma^{-1})\sigma+ \sigma^{-1}D_X\sigma.$ Thus
$$D_X\sigma^{-1}= -\sigma^{-1} \left(D_X\sigma\right) \sigma^{-1}.$$
Therefore,
$$ D_XC=(D_X\sigma) \mathcal{C} \sigma^{-1} + \sigma \mathcal{C} D_X\sigma^{-1}=\sigma\left[\sigma^{-1}D_X\sigma\right] \mathcal{C} \sigma^{-1} - \sigma \mathcal{C}\left[\sigma^{-1} D_X\sigma\right] \sigma^{-1}= \sigma[\omega(X),\mathcal{C}]\sigma^{-1}.$$
The first claimed expression for $D_XC$ is now a consequence of  Equation \ref{curvature}. A simple computation also gives, for any given $v\in \mathbb{R}^3$,  
\begin{equation}\label{O} \sigma(q) \mathcal{O}\sigma(q^{-1})v=
2\cos(\beta/2)(e_3\odot u_1)_{{q}}v\end{equation}
yielding the second expression for $D_XC$.
\end{proof}

It is convenient to define the following two projections.
Let $\xi=(q,v)\in N^\pm$.  The space $T_\xi N^\pm$ decomposes as a direct sum
$T_\xi N^\pm=H_\xi\oplus V_\xi$
where 
$$ H_\xi=T_qN=\{X\in \mathbb{R}^3: X\cdot e_3(q)=0\} \text{ and } V_\xi=v^\perp=\{Y\in \mathbb{R}^3:Y\cdot v=0\}.$$
(Recall that $N:=N^+$.) We refer to these as the {\em horizontal} and {\em vertical} subspaces of $T_\xi N^\pm$. 
We use the same symbols  to denote the projections
$H_\xi: \mathbb{R}^3\rightarrow T_q(\partial M)$ and $V_\xi:\mathbb{R}^3\rightarrow v^\perp$ defined by
$$ H_\xi Z:= Z- \frac{\langle Z, e_3(q)\rangle}{\langle v, e_3(q)\rangle} v, \ \ V_\xi Z:=Z- \langle Z, v\rangle v.$$ 
Notice that for $\xi=(q,v)\in N^\pm$ and $Z\in \mathbb{R}^3$ 
$$
\langle e_2(q), H_\xi Z\rangle
= \frac{\langle  Z, e_2(q)\rangle \langle v, e_3(q)\rangle- \langle Z, e_3(q)\rangle   \langle v, e_2(q)\rangle}{\langle v, e_3(q)\rangle}=\frac{\langle v\times e_1(q), Z\rangle}{\langle v, e_3(q)\rangle}.
$$
Also observe that $v\times e_1= |\overline{v}| w_2(\xi)$, where $w_2$ is the second vector in the wavefront frame (cf. Definition \ref{frames}) and $\overline{v}$ is the orthogonal projection of $v$
to the plane perpendicular to $e_1$. Thus, denoting by $\phi(\xi)$ the angle between $\overline{v}$ and $e_3(q)$ (this is the same $\phi$ as in Figures \ref{eigenframe}, \ref{period2} and \ref{sinai})
\begin{equation}\label{cross}
\langle e_2(q), H_\xi Z\rangle=\frac{1}{\cos\phi(\xi)} \langle w_2(\xi), Z\rangle. 
\end{equation}

 Let $q\in \partial M$, $v=v_-\in N_q^-$, $v_+:=C_qv_-\in N_q^+$, 
$\xi=\xi_-=(q,v_-)$, $\xi_+=(q,v_+)$. Define \begin{equation}\label{Lambda}\Lambda_\xi:= V_{\xi_+} H_{\xi_-} : v_-^\perp\rightarrow v_+^\perp.\end{equation}
Clearly $\Lambda_{\xi}$ is  defined on all of $\mathbb{R}^3$, not only on $v_-^\perp$, but we are particularly interested in
its restriction to the latter subspace.

Let $\xi=(q,v)$ be a point contained in a neighborhood of $N$ where $T$ is defined and differentiable. Set $\tilde{\xi}= T(\xi)$.
We wish to describe
$dT_\xi: T_\xi N\rightarrow T_{\tilde{\xi}}N$. Let $\xi(s)=(q(s), v(s))$ be a differentiable curve in $N$ with $\xi(0)=\xi$ and  define $$X:=q'(0)\in T_qN,\ \  Y:=v'(0)\in v^\perp.$$
Then $\tilde{\xi}(s)= T(\xi(s))=(\tilde{q}(s), \tilde{v}(s))\in N$ where $\tilde{q}(s)=q(s) + t(s) v(s)$ and $\tilde{v}(s)=C_{\tilde{q}(s)} v(s)$.
From the equality
$ \langle  \tilde{q}'(0), e_3(\tilde{q})\rangle=0$  it follows that 
$$t'(0)= - \frac{\langle X+tY, e_3(\tilde{q})\rangle}{\langle v, e_3(\tilde{q})\rangle}.$$
Consequently,  $\tilde{X}:=\tilde{q}'(0)\in T_{\tilde{q}}N$ and $\tilde{Y}:=\tilde{v}'(0)\in \tilde{v}^\perp$ satisfy
$$ \tilde{X}=X+tY - \frac{\langle X+tY, e_3(\tilde{q})\rangle}{\langle v, e_3(\tilde{q})\rangle}v= H_{\tilde{\xi}_-}(X+tY)$$
and $$ \tilde{Y}=C_{\tilde{q}}Y +\left[\left. \frac{d}{ds}\right|_{s=0} C_{\tilde{q}(s)}\right]v=C_{\tilde{q}} Y + \kappa(\tilde{q}) \langle e_2(\tilde{q}), \tilde{X}\rangle \mathcal{O}_{\tilde{q}} v$$ 
where we have used Lemma \ref{curvlemma}. From the same lemma,
$$
\sigma(\tilde{q})\mathcal{O}\sigma(\tilde{q})^{-1} v=-2\cos(\beta/2)(\nu\odot u_1)_{\tilde{q}}v.
$$
Thus 
\begin{align}
\begin{split}
\tilde{X}&= H_{\tilde{\xi}_-}(X+tY)\\
\tilde{Y}&=C_{\tilde{q}}Y -2\cos(\beta/2) {\kappa(\tilde{q})}\left\langle e_2(\tilde{q}), H_{\tilde{\xi}_-}(X+tY)\right\rangle
 (\nu\odot u_1)_{\tilde{q}}v. 
 \end{split}
\end{align}

As already noted,  $T_\xi N^+=T_q(\partial M)\oplus v^\perp$. By using the projection $V_\xi:T_q(\partial M)\rightarrow v^\perp$ introduced earlier we
may identify $T_\xi N^+$ with the sum $v^\perp\oplus v^\perp$. In this way $dT_\xi$ is regarded as a map from $v^\perp\oplus v^\perp$ to
$\tilde{v}^\perp\oplus \tilde{v}^\perp.$

\begin{proposition}\label{differential}
   Let $T:N\rightarrow N$ be the billiard map,
 $\xi=(q,v)\in N$  and  $(\tilde{q},\tilde{v})=\tilde{\xi}=T(\xi)$, where $\tilde{q}=q+tv$, and $\tilde{v}=C_{\tilde{q}}v$. 
Under the identification of the tangent space $T_\xi N$ with $v^\perp\oplus v^\perp$ as indicated just above, we may regard  the differential
  $dT_\xi$ as a linear map from 
  $v^\perp\oplus v^\perp$ to
$\tilde{v}^\perp\oplus \tilde{v}^\perp.$ Also recall from (\ref{cross})   the definition of  $\Lambda_{\tilde{\xi}}:v^\perp\rightarrow \tilde{v}^\perp$. Then $dT_\xi :T_\xi N\rightarrow T_{\tilde{\xi}} N$ is given by
$$
\left(\begin{array}{c}X \\Y\end{array}\right)\mapsto
\left(\begin{array}{l}
\Lambda_{\tilde{\xi}}(X+tY)\\
C_{\tilde{q}}Y +2\cos(\beta/2)\kappa(\tilde{q})\frac{ (e_3\odot u_1)_{\tilde{q}}v}{\cos\phi(\tilde{q},v)}
\langle w_2(\xi),X+tY\rangle 
\end{array}\right)
$$
where $\cos\phi(\tilde{q},v)= \langle \overline{v}/|\overline{v}|, e_3(\tilde{q})\rangle$ and $\overline{v}$ is the orthogonal projection of $v$ to $e_1^\perp$.
\end{proposition}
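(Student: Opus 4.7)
The plan is to assemble the derivation already carried out just before the statement and repackage the resulting formulas into the form required by the proposition. I would start from a differentiable curve $\xi(s)=(q(s),v(s))\in N$ with $\xi(0)=\xi$ and tangent data $X=q'(0)\in T_q(\partial M)$, $Y=v'(0)\in v^\perp$, and track $\tilde\xi(s)=T(\xi(s))=(\tilde q(s),\tilde v(s))$ with $\tilde q(s)=q(s)+t(s)v(s)$ and $\tilde v(s)=C_{\tilde q(s)}v(s)$. The constraint $\tilde q(s)\in\partial M$ forces $\langle \tilde q'(0),e_3(\tilde q)\rangle=0$, which determines $t'(0)$ and produces the compact expression $\tilde X=H_{\tilde\xi_-}(X+tY)$, where $\tilde\xi_-=(\tilde q,v)$ is the pre-collision lift.

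For the first component of $dT_\xi$, I would apply the vertical projection $V_{\tilde\xi}$ that identifies $T_{\tilde q}(\partial M)$ with $\tilde v^\perp$ in the splitting $T_{\tilde\xi}N\cong \tilde v^\perp\oplus\tilde v^\perp$. Writing $\tilde\xi_+=\tilde\xi$, the definition $\Lambda_{\tilde\xi}=V_{\tilde\xi_+}H_{\tilde\xi_-}$ in (\ref{Lambda}) immediately gives the first coordinate as $\Lambda_{\tilde\xi}(X+tY)$.

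For the second component I would differentiate $\tilde v(s)=C_{\tilde q(s)}v(s)$ by the product rule, obtaining $\tilde Y=C_{\tilde q}Y+(D_{\tilde X}C)v$. Lemma \ref{curvlemma} evaluates the derivative of the collision map as $D_{\tilde X}C=2\cos(\beta/2)\kappa(\tilde q)\langle e_2(\tilde q),\tilde X\rangle(u_1\odot e_3)_{\tilde q}$, and the symmetry of $\odot$ built into (\ref{odot}) lets me rewrite $(u_1\odot e_3)_{\tilde q}$ as $(e_3\odot u_1)_{\tilde q}$. Because $e_1$ is a parallel field in $\mathbb{R}^3$, the wavefront vectors depend only on $v$, so $w_2(\tilde\xi_-)=w_2(\xi)$ and $\phi(\tilde\xi_-)=\phi(\tilde q,v)$; then (\ref{cross}) converts $\langle e_2(\tilde q),H_{\tilde\xi_-}(X+tY)\rangle$ into $\cos\phi(\tilde q,v)^{-1}\langle w_2(\xi),X+tY\rangle$, producing exactly the second coordinate stated in the proposition.

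There is no deep obstacle in this argument: Lemma \ref{curvlemma} has already absorbed the nontrivial curvature contribution from the collision, and (\ref{cross}) handles the conversion between the product frame at $\tilde q$ and the wavefront frame at $\xi$. The only real care is bookkeeping, namely keeping track of which tangent space each vector lives in, remembering that $\tilde\xi_-$ and $\tilde\xi_+$ share the base point $\tilde q$ but carry different velocities, and exploiting the parallelism of $e_1$ to identify the wavefront frames at $\xi$ and at $\tilde\xi_-$.
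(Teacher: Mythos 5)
Your proposal is correct and follows essentially the same route as the paper: the paper's proof is literally "a consequence of the preceding remarks," namely the curve computation giving $t'(0)$, the identities $\tilde X=H_{\tilde\xi_-}(X+tY)$ and $\tilde Y=C_{\tilde q}Y+(D_{\tilde X}C)v$ via Lemma \ref{curvlemma}, and the conversion (\ref{cross}) together with the observation that $w_2$ depends only on $v$. Your bookkeeping of $\tilde\xi_-$ versus $\tilde\xi_+$ and the use of the symmetry of $\odot$ are exactly what the paper intends (and in fact resolve a stray sign in the paper's intermediate display in favor of the stated formula).
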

\begin{proof}
This is a consequence of the  preceding remarks and definitions. 
\end{proof}

\begin{corollary}\label{cor}
If $\xi=(q,v)$ is periodic of period $2$,  then $C_{\tilde{q}}v=-v$, $\langle v, u_1(\tilde{q})\rangle=0$, and the map of Proposition \ref{differential} reduces to
$$
\left(\begin{array}{c}X \\Y\end{array}\right)\mapsto
\left(\begin{array}{l}
X+tY\\
C_{\tilde{q}}Y +2\cos(\beta/2) {\kappa(\tilde{q})}\frac{\cos\psi(\tilde{q},v)}{\cos \phi(\tilde{q},v)}  \left\langle w_2(\xi), X+tY\right\rangle   u_1(\tilde{q})\end{array}\right). 
$$
where $\cos\psi(\tilde{q},v):=\langle v, e_3(\tilde{q}) \rangle$, $\cos\phi(\tilde{q},v)= \langle \overline{v}/|\overline{v}|, e_3(\tilde{q})\rangle$.
\end{corollary}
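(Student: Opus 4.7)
The corollary is a direct specialization of Proposition \ref{differential} to the period-2 case, so the plan is to isolate the period-2 identities that make the general formula collapse to the claimed one. I would first verify the two preliminary assertions: $C_{\tilde{q}} v = -v$ and $\langle v, u_1(\tilde{q}) \rangle = 0$. The former is immediate from period-2 closure: writing $T(q,v) = (\tilde{q}, C_{\tilde{q}}v)$ and $T^2(q,v) = (q,v)$ as in the derivation at the start of Section \ref{periodic section}, the equality of base points forces $v + C_{\tilde{q}}v = 0$. For the latter, $C_{\tilde{q}}$ is an orthogonal involution that fixes $u_1(\tilde{q})$ and sends $v$ to $-v$, so $v$ and $u_1(\tilde{q})$ lie in orthogonal eigenspaces.

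Next I would show that $\Lambda_{\tilde{\xi}}$ acts as the identity on $v^\perp$ at a period-2 point, yielding $\Lambda_{\tilde{\xi}}(X+tY) = X + tY$ in the first coordinate. Since $X, Y \in v^\perp$ under the identification of $T_\xi N$, the vector $W := X + tY$ also lies in $v^\perp$. Applying the definition of $H_{\tilde{\xi}_-}$ produces $W - \alpha v$ with $\alpha = \langle W, e_3(\tilde{q})\rangle/\langle v, e_3(\tilde{q})\rangle$. Since $\tilde{v} = -v$, the projection $V_{\tilde{\xi}_+}$ is simply orthogonal projection onto $v^\perp$ along $v$, which eliminates the $-\alpha v$ term and restores $W$. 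This is the only place the period-2 condition enters the first coordinate.

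For the second coordinate I would expand the symmetric product using its definition \eqref{odot}:
\begin{equation*}
(e_3 \odot u_1)_{\tilde{q}} v = \langle e_3(\tilde{q}), v\rangle u_1(\tilde{q}) + \langle u_1(\tilde{q}), v\rangle e_3(\tilde{q}).
\end{equation*}
By the orthogonality $\langle v, u_1(\tilde{q})\rangle = 0$ established above and the definition $\cos\psi(\tilde{q},v) = \langle v, e_3(\tilde{q})\rangle$, this reduces to $\cos\psi(\tilde{q},v) \, u_1(\tilde{q})$. Substituting into the second component of the formula in Proposition \ref{differential} gives exactly the claimed expression. No genuine obstacle appears — the work is entirely bookkeeping built on the two period-2 identities — so once those identities are in hand, the remainder of the proof is a short substitution.
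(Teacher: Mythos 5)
Your proposal is correct and follows essentially the same route as the paper's (very terse) proof: both rest on the observations that $C_{\tilde{q}}v=-v$ forces $\langle v,u_1(\tilde q)\rangle=0$, that consequently $(e_3\odot u_1)_{\tilde q}v=\langle e_3(\tilde q),v\rangle\,u_1(\tilde q)=\cos\psi(\tilde q,v)\,u_1(\tilde q)$, and that $\Lambda_{\tilde\xi}Z=Z$ whenever $\langle Z,v\rangle=0$. You have merely supplied the bookkeeping (the explicit computation of $V_{\tilde\xi_+}H_{\tilde\xi_-}$ on $v^\perp$) that the paper leaves to the reader.
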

\begin{proof}
Clearly, $C_{\tilde{q}}v=-v$, whence   $\langle v, u_1(\tilde{q})\rangle=0$ and
 $(e_3\odot u_1)_{\tilde{q}}v=\langle e_3(\tilde{q}), v\rangle u_1(\tilde{q})$. Also notice that $\Lambda_{\tilde{\xi}}Z=Z$ whenever  $\langle Z, v\rangle=0$. 
The corollary follows.
\end{proof}

\section{Measure invariance and time reversibility}\label{measure reversible} It will be seen below that the no-slip billiard map does not preserve the natural symplectic form on $N$, so these mechanical systems are not  Hamiltonian. Nevertheless, the canonical billiard measure derived from the symplectic form (the Liouville measure)  is invariant and the system is time reversible, so some of the good features of Hamiltonian systems are still present. (See, for example,  \cite{sevryuk, sevryuk91} where  a KAM theory is developed for reversible systems.)

Recall that the invertible map $T$ is said to be {\em reversible} if there exists an involution $\mathcal{R}$ such that $$\mathcal{R}\circ  T\circ \mathcal{R}=T^{-1}.$$ 

In order to see that the no-slip billiard map $T$ is reversible we first define the following maps:
$\Phi: (q,v)\mapsto (q+tv,v)$, where $t$ is the time of free motion of the trajectory with initial state $(q, v)$, so that $q, q+tv\in \partial M$; the collision map $C:N\rightarrow N$ given by
$C(q,v)=(q, C_qv)$; and the flip map $J:(q,v)\mapsto (q,-v)$ where $q\in \partial M$ and $v\in \mathbb{R}^3$. Recall that $T=C\circ \Phi$.
Now set $\mathcal{R}:=J\circ C=C\circ J$. It is clear (since $C_q$ is an involution by Proposition \ref{classification}) that $\mathcal{R}^2 = I$ and that $J\circ \Phi\circ J = \Phi^{-1}$. Therefore,
$$\mathcal{R}\circ T\circ \mathcal{R}= J\circ C^2\circ \Phi\circ J\circ C=  J\circ \Phi\circ J\circ C= \Phi^{-1}\circ C = (C\circ \Phi)^{-1}= T^{-1}.  $$

Notice that if $L:V\rightarrow V$ is a reversible isomorphism of a vector space $V$ with time reversal map $\mathcal{R}:V\rightarrow V$ (so that $\mathcal{R}\circ L\circ \mathcal{R}=L^{-1}$) then for any eigenvalue $\lambda$ of $L$ associated to eigenvector $u$, $1/\lambda$ is also an eigenvalue for the eigenvector $\mathcal{R}u$, as easily checked.  These simple observations have the following useful corollary.

\begin{proposition}
Let $\xi\in N$ be a periodic point  of period $k$ of the no-slip billiard system and let $\lambda$ be an eigenvalue of the differential map $dT^k_\xi: T_\xi N\rightarrow T_\xi N$ corresponding to  eigenvector $u$.
Then $1/\lambda$ is also an eigenvalue of $dT^k_\xi$ corresponding to eigenvector $\mathcal{R} u$, where $\mathcal{R}$ is the composition of the collision map $C$ and the flip map $J$. Furthermore,   $e_1$ (see Definition \ref{frames}) is always an eigenvector of $dT_\xi$ and all its powers, corresponding to the eigenvalue $1$.
\end{proposition}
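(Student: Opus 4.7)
The plan is to derive both assertions from the two symmetries already established in the paper: the reversibility identity $\mathcal{R}\circ T\circ\mathcal{R}=T^{-1}$ from the preceding paragraph, and the translation symmetry $T(q+\lambda e_1,v)=T(q,v)+\lambda e_1$ recorded in Section~\ref{noslip billiards definitions}.

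First, iterating the reversibility identity yields $\mathcal{R}\circ T^k\circ\mathcal{R}=T^{-k}$, and applying this to $\xi$ gives $T^k(\mathcal{R}\xi)=\mathcal{R}(T^{-k}\xi)=\mathcal{R}\xi$, so $\mathcal{R}\xi$ is itself a period-$k$ point. Differentiating the functional identity at $\xi$, using the chain rule together with $(d\mathcal{R})_{\mathcal{R}\xi}(d\mathcal{R})_\xi=I$ (which follows from $\mathcal{R}^2=I$), produces
\[
(d\mathcal{R})_{\mathcal{R}\xi}\circ(dT^k)_{\mathcal{R}\xi}\circ(d\mathcal{R})_\xi=(dT^k_\xi)^{-1}.
\]
For the period-$2$ orbits that motivate this proposition (and are treated in Section~\ref{curved section}), a direct computation shows $\mathcal{R}\xi=\xi$: from $T^2\xi=\xi$ one derives $\tilde v=-v$, hence $C_qv=C_q(-\tilde v)=-v$, so $\mathcal{R}(q,v)=(q,-C_qv)=(q,v)$. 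Under the hypothesis $\mathcal{R}\xi=\xi$, the displayed identity becomes a genuine conjugation of $L:=dT^k_\xi$ to $L^{-1}$ on the single space $T_\xi N$ by the involution $(d\mathcal{R})_\xi$, so the linear-algebra remark on reversible isomorphisms quoted immediately above the proposition applies: from $Lu=\lambda u$ one gets $L^{-1}((d\mathcal{R})_\xi u)=(d\mathcal{R})_\xi Lu=\lambda\,(d\mathcal{R})_\xi u$, i.e.\ $(d\mathcal{R})_\xi u$ is an eigenvector of $L$ with eigenvalue $1/\lambda$.

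For the $e_1$ statement, I would differentiate $T(q+\lambda e_1,v)=T(q,v)+\lambda e_1$ at $\lambda=0$, obtaining $dT_\xi(e_1,0)=(e_1,0)$ in the horizontal-plus-vertical decomposition $T_\xi N=T_q(\partial M)\oplus v^\perp$ (available because $\langle e_1,e_3(q)\rangle=0$ at every $q\in\partial M$). Iterating shows $(e_1,0)$ is fixed by every power $dT^j_\xi$. The same conclusion can be read directly from Proposition~\ref{differential}: taking $X=e_1$, $Y=0$, the vertical component of the image vanishes because $w_1(\xi)\in\mathrm{span}\{e_1,v\}$ forces $w_2(\xi)=v\times w_1(\xi)\perp e_1$, while the horizontal component $\Lambda_{\tilde\xi}(e_1)$ is just the same horizontal vector $e_1$ re-expressed in $\tilde v^\perp$. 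The one delicate point in the whole argument is the case $\mathcal{R}\xi\neq\xi$, where $(d\mathcal{R})_\xi u$ naturally lives in $T_{\mathcal{R}\xi}N$ rather than $T_\xi N$; the displayed similarity then only pairs the spectra of $dT^k_\xi$ and $dT^k_{\mathcal{R}\xi}$ as reciprocals, so the clean reciprocal pairing inside $T_\xi N$ itself requires $\xi$ to be fixed by $\mathcal{R}$, which is the situation in all the applications considered in the paper.
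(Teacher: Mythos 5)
Your argument is essentially the paper's own: the proposition is presented there as an immediate corollary of the reversibility identity $\mathcal{R}\circ T\circ\mathcal{R}=T^{-1}$, the linear-algebra remark about reversible isomorphisms quoted just before the statement, and the translation symmetry $T(q+\lambda e_1,v)=T(q,v)+\lambda e_1$, with no further proof supplied. The caveat you raise about $\mathcal{R}\xi\neq\xi$ points to a genuine imprecision in the statement itself rather than in your proof (since $d\mathcal{R}_\xi u$ lives in $T_{\mathcal{R}\xi}N$, the conjugation a priori only pairs the spectra of $dT^k_\xi$ and $dT^k_{\mathcal{R}\xi}$ reciprocally); the paper glosses over this, and your verification that $\mathcal{R}$ fixes period-$2$ states covers every case in which the proposition is actually applied.
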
 

We now turn to invariance of the canonical measure.
The canonical $1$-form $\theta$ on $N$ is defined by
$$\theta_\xi(U):=v\cdot X $$ for  $\xi=(q,v)\in N$ and $U=(X,Y)\in T_qN\oplus v^\perp = T_\xi N$.  Its differential $d\theta$ is
a symplectic form on $N\cap \{v\in T_q(\partial M):|v|=1\}^c$ and $\Omega=d\theta\wedge d\theta$ is the canonical volume form
on this same set.  In terms of horizontal and vertical components of vectors in $TN$, the symplectic form is expressed as
$$d\theta(U_1, U_2)= Y_1\cdot X_2  - Y_2\cdot X_1$$
where $U_i=(X_i,Y_i)$.  An elementary computation shows that the measure on $N$ associated to $\Omega$ is given by
\begin{equation}\label{liouville}|\Omega_\xi|= v\cdot \nu(q) \, dA_{\partial M}(q)\, dA_{N}(v) \end{equation}
where $\nu(q):=e_3(q)$,  $dA_{\partial M}(q)$ is the area measure on $\partial M$, and $dA_{N}(v)$ is the area measure on the hemisphere $N_q=\{v\in \mathbb{R}^3:v\cdot \nu(q)>0\}$.
\begin{proposition}
The canonical $4$-form $\Omega$ on $N$ transforms under the no-slip billiard map as $T^*\Omega=-\Omega$. In particular, the
associated measure $|\Omega|$, shown explicitly in Equation \ref{liouville}, is invariant under $T$. 
\end{proposition}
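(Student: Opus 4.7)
The plan is to prove $T^*\Omega=-\Omega$ by showing that the Jacobian determinant $\det(dT_\xi)$ equals $-1$ at every $\xi\in N$. The invariance of the measure $|\Omega|$ follows immediately from $|\det(dT_\xi)|=1$.

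I start from the explicit formula for $dT_\xi$ given in Proposition \ref{differential} in the identification $T_\xi N\cong v^\perp\oplus v^\perp$. The shear change of coordinates $(X,Y)\mapsto(X',Y)$ with $X'=X+tY$ has unit Jacobian, and in the new coordinates the differential takes the block lower-triangular form
\[
(X',Y)\longmapsto\bigl(\Lambda_{\tilde\xi}X',\ C_{\tilde q}Y+G(X')\bigr),
\]
where $G:v^\perp\to\tilde v^\perp$ is the rank-one curvature correction and the diagonal blocks are $\Lambda_{\tilde\xi}:v^\perp\to\tilde v^\perp$ and $C_{\tilde q}|_{v^\perp\to\tilde v^\perp}$. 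Consequently $\det(dT_\xi)=\det(\Lambda_{\tilde\xi})\cdot\det(C_{\tilde q}|_{v^\perp\to\tilde v^\perp})$, and it suffices to compute these two factors.

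For the collision factor, a direct check from \eqref{collision_map} shows $\det\mathcal{C}=+1$, so $C_{\tilde q}\in SO(3)$ and preserves the orientation of $\mathbb{R}^3$. Using orientations of $v^\perp$ and $\tilde v^\perp$ determined by the requirement that $(v,f_1,f_2)$ and $(\tilde v,C_{\tilde q}f_1,C_{\tilde q}f_2)$ are positively oriented in $\mathbb{R}^3$, the restriction $C_{\tilde q}|_{v^\perp\to\tilde v^\perp}$ is an orientation-preserving orthogonal isomorphism of determinant $+1$. For the $\Lambda$ factor I split $\Lambda_{\tilde\xi}=V_{\tilde\xi_+}\circ H_{\tilde\xi_-}$, each step being an isomorphism between two-dimensional subspaces of $\mathbb{R}^3$. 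Using the identification $dA_W=\iota_n\,dV$ for any oriented plane $W\subset\mathbb{R}^3$ with unit normal $n$, a short calculation with an orthonormal basis $(f_1,f_2)$ of $v^\perp$ (with $(v,f_1,f_2)$ positively oriented) yields
\[
\det\bigl(H_{\tilde\xi_-}|_{v^\perp\to T_{\tilde q}N}\bigr)=\frac{1}{\langle v,e_3(\tilde q)\rangle},\qquad
\det\bigl(V_{\tilde\xi_+}|_{T_{\tilde q}N\to\tilde v^\perp}\bigr)=\langle\tilde v,e_3(\tilde q)\rangle.
\]
Since $C_{\tilde q}$ sends $e_3(\tilde q)=u_3(\tilde q)$ to $-e_3(\tilde q)$, the normal components satisfy $\langle\tilde v,e_3(\tilde q)\rangle=-\langle v,e_3(\tilde q)\rangle$, and hence $\det(\Lambda_{\tilde\xi})=-1$. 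Combining the two factors gives $\det(dT_\xi)=-1$, so $T^*\Omega=-\Omega$ and the associated measure $|\Omega|$ is preserved by $T$.

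The main obstacle is keeping orientations consistent in the $\Lambda$ computation: the subspace $T_{\tilde q}N$ must be oriented via $e_3(\tilde q)$, while $v^\perp$ and $\tilde v^\perp$ are oriented via $v$ and $\tilde v$ respectively. The crucial point is that $\langle v,e_3(\tilde q)\rangle<0$ (since $v$ is the incoming velocity at $\tilde q$); combined with the sign flip of the normal component of velocity during collision, this is exactly what produces the overall factor of $-1$, distinguishing the no-slip billiard from a direct volume-preserving map.
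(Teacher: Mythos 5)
Your route is genuinely different from the paper's: the paper never computes a Jacobian, but instead splits the canonical one-form along the eigenframe as $\theta=\theta^{u_1}+\theta^{u_2}$, checks $C^*\theta^{u_1}=\theta^{u_1}$, $C^*\theta^{u_2}=-\theta^{u_2}$ and $d\theta^{u_i}\wedge d\theta^{u_i}=0$, and reads off $C^*\Omega=-\Omega$ from $\Omega=2\,d\theta^{u_1}\wedge d\theta^{u_2}$. Your linear-algebraic computation is correct as far as it goes: the shear has unit Jacobian, the block-triangular structure gives $\det(dT_\xi)=\det(\Lambda_{\tilde\xi})\det\bigl(C_{\tilde q}|_{v^\perp\to\tilde v^\perp}\bigr)$, and with your orientation conventions one indeed gets $+1$ for the collision block and $\det\Lambda_{\tilde\xi}=\langle\tilde v,e_3(\tilde q)\rangle/\langle v,e_3(\tilde q)\rangle=-1$ (the product of the two block determinants is moreover independent of the orientation choices, since flipping either orientation changes both factors by the same sign).

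There is, however, a genuine gap in the logic connecting this to the proposition. What you compute is the Jacobian of $T$ relative to the product of the \emph{Euclidean} area forms of $v^\perp$ and $\tilde v^\perp$ in the identification $T_\xi N\cong v^\perp\oplus v^\perp$. But $\Omega$ has the non-constant density $v\cdot\nu(q)$ relative to $dA_{\partial M}\,dA_N$ (Equation \ref{liouville}), and "Jacobian $\pm 1$ in some coordinates" only yields invariance of the volume attached to \emph{those} coordinates. The missing step is the lemma that the identification $(X,Y)\mapsto (V_\xi X, Y)$ carries $\Omega_\xi$ to a constant multiple of the product area form on $v^\perp\oplus v^\perp$: since $V_\xi|_{T_q(\partial M)}$ is the orthogonal projection onto $v^\perp$ and scales areas by exactly $\langle v, e_3(q)\rangle$, it absorbs precisely the density in Equation \ref{liouville}. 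This is a one-line addition, but it is the step that makes the density disappear; had you instead computed the same Jacobian relative to $dA_{\partial M}\,dA_N$ you would have obtained $-\cos\psi(\xi)/\cos\psi(\tilde\xi)$, which is not $\pm 1$ in general. With that lemma inserted, your argument closes and stands as a legitimate, more computational alternative to the paper's form-theoretic proof; it also makes the sign $-1$ visibly come from the reversal of the normal component of velocity, which the paper's proof attributes instead to $C^*\theta^{u_2}=-\theta^{u_2}$.
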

\begin{proof} Let $u$ be a vector field on $\partial M$ and introduce the one-form $\theta^u$  on $N$ given by
 $$ \theta^u_\xi(U):= (v\cdot u(q)) (u(q)\cdot X)$$
for $\xi=(q,v)$ and $U=(X,Y)$.  Taking $u$ to be each of the vector fields $u_1, u_2$ we obtain the $1$-forms $\theta^{u_1}$ and $\theta^{u_2}$.
As $v=(v\cdot u_1) u_1 + (v\cdot u_2) u_2 + (v\cdot \nu) \nu$ and $X\cdot \nu=0$, we have
$$ \theta=\theta^{u_1}+\theta^{u_2}.$$
The no-slip collision map $C$ acts on $u=\theta^{u_i}$ as follows: For  $U=(X, Y)\in T_q(\partial M)\oplus v^\perp$,
$$(C^*\theta^u)_\xi(U)= (C_q(v)\cdot u(q))(u(q)\cdot X)= (v\cdot C_q(u(q)))(u(q)\cdot X). $$
It follows that
$$ C^*\theta^{u_1}=\theta^{u_1}, \ \ C^*\theta^{u_2}= -\theta^{u_2}.$$
 
We now compute the differentials $d\theta^{u}$ for $u=u_1, u_2$. Observe that $\theta^{u}=f^{u}(\xi) (\pi^* u^\flat)$, where $f^\xi$ is the function
on $N$ defined by $f^u(\xi):=v\cdot u(q)$ and $\pi^*u^\flat$ is the pull-back under the projection map $\pi:N\rightarrow \partial M$ of the
$1$-form $u^\flat$ on $\partial M$ given by $u_q^\beta(X)=u(q)\cdot X$.  Thus
$$d\theta^u= df^u\wedge (\pi^*u^\flat) + f^u\pi^* du^\flat. $$
A simple calculation gives
$$ df^u_\xi(X,Y)= v\cdot (D_Xu) + u(q)\cdot Y.$$
The vector field $u=u_i$ is parallel on $\partial M$.  In fact, its derivative in direction $X\in T_q(\partial M)$ only has component in the normal direction, given by
$$D_Xu=\kappa(q) (X\cdot e_2(q)) (u(q)\cdot e_2(q)) \nu(q). $$
Omitting the dependence on $q$, we have
$$df^u_\xi(X,Y)= \kappa(q)(X\cdot e_2)(u\cdot e_2)(v\cdot \nu) + u\cdot Y. $$
Another simple calculation gives
$$ du_q^\flat(X_1,X_2)=(D_{X_1}u)\cdot X_2 - (D_{X_2}u)\cdot X_1=0$$
so $d\theta^u=df^u\wedge \pi^*u^\flat$.
Explicitly,
$$ d\theta^u(U_1, U_2)=(u\cdot Y_1)(u\cdot X_2)-(u\cdot Y_2)(u\cdot X_1)- \kappa(q) (v\cdot\nu)(u\cdot e_1)(u\cdot e_2)\omega(X_1, X_2)$$
where 
$$ \omega(X_1, X_2):=(e_1\cdot X_1)(e_2\cdot X_2)-(e_2\cdot X_1)(e_1\cdot X_2).$$
Notice  that $\omega$ is the area form  on $\partial M$.
A convenient way to express $d\theta^u$ is as follows. Define the $1$-form $\tilde{u}$ on $N$ by $\tilde{u}_\xi(U)=u(q)\cdot Y$, where
$U=(X,Y)\in T_\xi N$, and the function $g^u(\xi):=- \kappa(q)(v\cdot\nu)(u\cdot e_1)(u\cdot e_2)$.  These extra bits of notation now allow us to write
$$d\theta_\xi^u= g^u(\xi) (\pi^* \omega) + \tilde{u}\wedge (\pi^* u^\flat). $$
The main conclusion we wish to derive from these observations is that $d\theta^u\wedge d\theta^u=0$. This is the case because, as $\dim (\partial M)=2$,
 we must have  $\omega^2=0$ and $\omega\wedge u^\flat=0$.
Therefore,
$$\Omega:= d\theta\wedge d\theta= (d\theta^{u_1}+d\theta^{u_2})\wedge (d\theta^{u_1}+d\theta^{u_2})= 2 d\theta^{u_1}\wedge d\theta^{u_2}.$$
Finally,
$$ C^*\Omega = 2 d(C^* \theta^{u_1})\wedge d(C^*\theta^{u_2})= - 2 d\theta^{u_1}\wedge d\theta^{u_2}=-\Omega.$$
The forms  $d\theta$ and $\Omega$ are invariant under the geodesic flow and under the map it induces on $N$.  As $T$ is the composition of this map and $C$,
the proposition is established.
 \end{proof}

\section{Wedge billiards}\label{wedge section}
We set the following conventions for a wedge table with corner angle $2\phi$. See Figure \ref{wedge}. (This is the same $\phi$ that has appeared before in previous figures.) The boundary planes of the configuration manifold are denoted $\mathcal{P}_1$ and
$\mathcal{P}_2$. The orthonormal vectors of the constant product frame on plane $\mathcal{P}_i$ are $e_{1,i}, e_{2,i}, e_{3,i}=\nu_i$ for $i=1,2$ where
$$\begin{tabu}{lll}
 e_{1,1}=\left(\begin{array}{c}0 \\0 \\1\end{array}\right), &
e_{2,1}=\left(\begin{array}{c}\cos\phi \\ \!\!\!\!-\sin\phi \\0\end{array}\right), & 
e_{3,1}=\left(\begin{array}{c}\sin\phi \\ \cos\phi \\0\end{array}\right),\\
&  &\\
  e_{1,2}=\left(\begin{array}{c}0 \\0 \\1\end{array}\right), &
e_{2,2}=-\left(\begin{array}{c}\cos\phi \\ \sin\phi \\ 0\end{array}\right),  &
e_{3,2}=\left(\begin{array}{c}\sin\phi \\ \!\!\!\!-\cos\phi\\ 0\end{array}\right).
\end{tabu}$$

Let $\sigma_i:\mathbb{R}^3\rightarrow T_q\oplus \mathbb{R}\nu_i$ be  the constant orthogonal map such that $\sigma_i \epsilon_j=e_{j, i}$, where
 $\epsilon_i$, $i=1, 2, 3$,  is our   notation for the standard basis vectors in $\mathbb{R}^3$.
 Let $$u_{1,i}=\sin(\beta/2) e_{1,i} - \cos(\beta/2) e_{2,i}, \ \ u_{2,i}=\cos(\beta/2) e_{1,i}+\sin(\beta/2)e_{2,i}, \ \ u_{3,i}=e_{3,i}=\nu_i$$  be the    eigenvectors of the no-slip reflection map associated to the plane $\mathcal{P}_i$ and set $\zeta_i\epsilon_j:= u_{j,i}$.   For easy reference we record their matrices here:
 $$\zeta_i=\left(\begin{array}{ccc}(-1)^i\cos(\beta/2)\cos\phi & -(-1)^i\sin(\beta/2)\cos\phi & \sin\phi \\ \cos(\beta/2)\sin\phi & -\sin(\beta/2)\sin\phi & -(-1)^i\cos\phi \\\sin(\beta/2) & \cos(\beta/2) & 0\end{array}\right). $$
The initial velocity $v$ for the period-$2$ trajectory  points in the direction of $u_{1,2}\times u_{1,1}$ and  is given
by
$$v=\frac1{\sqrt{1-\cos^2(\beta/2)\cos^2\phi}}\left(\begin{array}{c}0 \\\sin(\beta/2) \\ \cos(\beta/2)\sin\phi\end{array}\right). $$
This periodic trajectory connects the points $q_1\in \mathcal{P}_1$ and $q_2\in \mathcal{P}_2$. Any such pair of points
can be written as
$$ q_1=a\left(\begin{array}{c}\sin(\beta/2)\cos\phi \\\!\!\!\! -\sin(\beta/2)\sin\phi \\b-\cos(\beta/2)\sin^2\phi\end{array}\right), \ \ q_2=a\left(\begin{array}{c}\sin(\beta/2)\cos\phi \\  \sin(\beta/2)\sin\phi \\ b+\cos(\beta/2)\sin^2\phi\end{array}\right)$$
where $a, b\in \mathbb{R}$, $a>0$.  In what follows we assume without  loss of generality that $a=1$ and $b=0$.  Thus
$$q_i = \left(\sin(\beta/2)\cos\phi, (-1)^i\sin(\beta/2)\sin\phi, (-1)^i\cos(\beta/2)\sin^2\phi\right)^t. $$

\begin{figure}[htbp]
\begin{center}
%\epsfile{file=bundle.eps,scale=0.8}
\includegraphics[width=3.5 in]{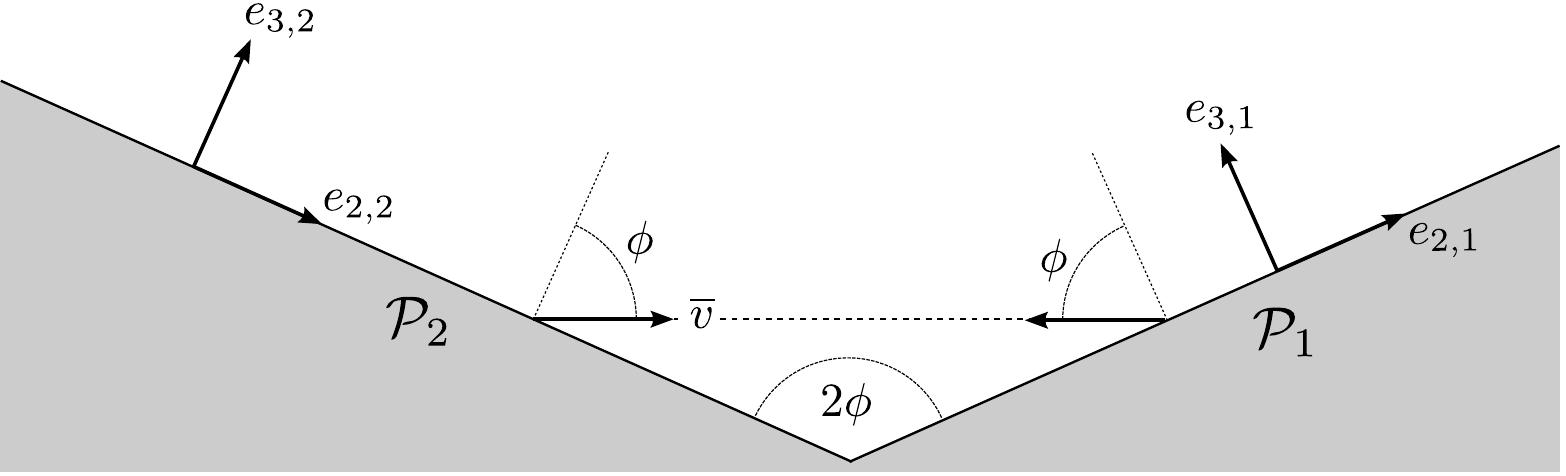}\ \ 
\caption{\small{Some notation specific to the wedge billiard table. The $\mathcal{P}_i$ are the half-plane components of the boundary of the configuration manifold.}}
\label{wedge}
\end{center}
\end{figure}

Let $S^\pm_i=\{v\in \mathbb{R}^3: |v|=1, \pm v\cdot \nu_i> 0 \}$.  
The collision  maps $C_i:S^-_{i}\rightarrow S^+_{i}$, $i=1,2$ are given by the matrices  
$$ C_i=\sigma_i\mathcal{C}\sigma_i^{-1}=
\zeta_i \left(\begin{array}{ccc}1 & 0 & 0 \\0 & \!\!\! -1 & 0 \\0 & 0 &\!\!\! -1\end{array}\right)\zeta_i^{-1}$$
where $\mathcal{C}$ was defined in \ref{collision_map}.
We now  introduce coordinates on $\mathcal{P}_i\times S^+_i$ as follows. Let  ${S}_+^2=\{z\in \mathbb{R}^3: |z|= 1, z_3>0\}$ and define $\Phi_i:\mathbb{R}^2\times S_+^2\rightarrow \mathcal{P}_i\times S^+_i$ by
$$ \Phi_i(x,y)=\left(q_i+ x_1 u_{1,i}+x_2 u_{2,i}, y_1 u_{1,i}+y_2u_{2,i}+y_3u_{3,i}\right).$$
Regarding $x\in \mathbb{R}^2$ as $(x, 0)\in \mathbb{R}^3$,  we may then write
$$ \Phi_i(x,y)= \left(q_i+\zeta_ix, \zeta_iy\right).$$

Clearly, the billiard map is not defined on all of $\bigcup_i\mathcal{P}_i\times S^+_i$ since those initial velocities not pointing
towards the other plane will escape to infinity, but we are interested in the behavior of the map
on a neighborhood of the periodic point $\xi_i=(q_i, v_i)$, $v_i=-(-1)^{i}v$. The question of interest here is whether some open neighborhood of $\xi_i$
remains invariant under the billiard map.
It is easily shown that the coordinates of the state $\xi_i$ (of  the period-$2$ orbit at the plane $\mathcal{P}_i$) are 
$\Phi_i^{-1}(\xi_i)=(0,y_i)\in \mathbb{R}^2\times S^2_+$
where

$$y_i=\frac1{\sqrt{1-\cos^2(\beta/2)\cos^2\phi}}\left(0, (-1)^i\sin\phi,  \sin(\beta/2)\cos\phi\right)^t$$
Let $T_i:\mathcal{D}_i\subset \mathbb{R}^2\times S_+^2\rightarrow \mathbb{R}^2\times S_+^2$ be   the billiard map  
restricted to $\mathcal{P}_i\times S^+_i$ expressed in the coordinate system defined by $\Phi_i$.  
 Thus
$$ T_1=\Phi_2^{-1} T \Phi_1, \ \ T_2=\Phi_1^{-1}T\Phi_2$$ 
on their  domains $\mathcal{D}_i$.  We now find the explicit form of $T_i$. 
 Define  $\bar{i}=\begin{cases}1& \text{ if } i=2\\ 2 & \text{ if } i=1\end{cases}$ and
  orthogonal  matrices 
$ A_i := \zeta_{\bar{i}}^{-1} \zeta_i$ and
$S=\text{diag}(1,-1,-1)$, both in $SO(3)$. Also define 
$$\alpha:=2\sin\phi\sqrt{1-\cos^2(\beta/2)\cos^2\phi}.$$
Observe that 
$ \zeta_{\bar{i}}^{-1}C_{\bar{i}} \zeta_i= S A_i$.  
It is easily shown that
$$q_i-q_{\bar{i}}=-\alpha v_i, \ v_i=\zeta_i {y}_i, \  A_i {y}_i=-{y}_{\bar{i}}, \ SA_i y_i=y_{\bar{i}}.$$
In particular, $ \zeta^{-1}_{\bar{i}}(q_i-q_{\bar{i}})=-\alpha {y}_i$.
 Let  
  $Q:\mathbb{R}^3\times S^2_+\rightarrow \mathbb{R}^2$ be defined by
$$ Q(x,y):= x-\frac{x\cdot\epsilon_3}{y\cdot \epsilon_3}y.$$ 
Notice    that $Q(x,y)\cdot \epsilon_3=0$. 
We now have
\begin{equation}T_i:(x,y)\mapsto \left({Q\left(A_i(x-\gamma y_i), A_i{y}\right)}, SA_i {y}\right). \end{equation}

\begin{figure}[htbp]
\begin{center}
%\epsfile{file=bundle.eps,scale=0.8}
\includegraphics[width=2.5 in]{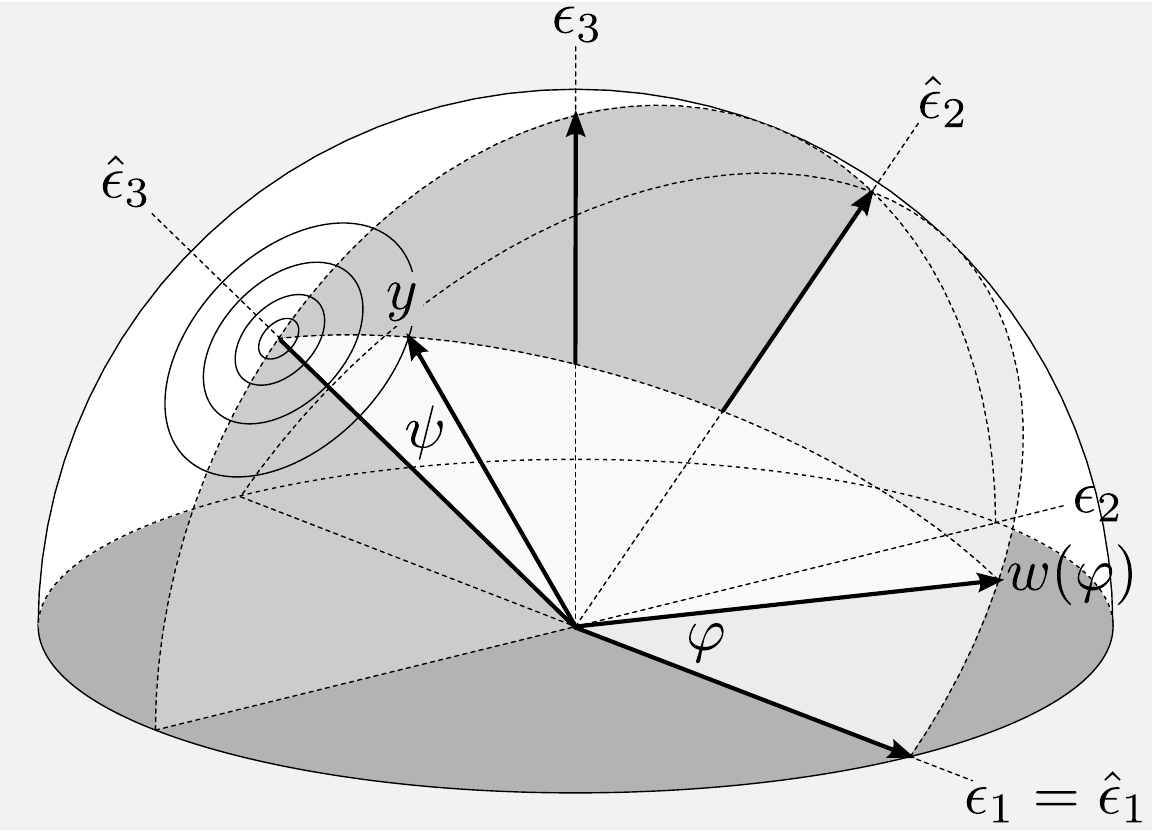}\ \ 
\caption{\small{The velocity factor  of orbits of the return billiard map $T_2T_1$  in coordinate system $\Phi_1$  lie in concentric circles with axis $y_1=\hat{\epsilon}_3$.
We use spherical  coordinates $\varphi$ and $\psi$ relative to the axis $\hat{\epsilon}_3$ to represent the velocity $y\in S^2_+$.
With respect to these coordinates, the return map sends $w(\varphi)$ to $w(\varphi+\theta)$, where $\theta$ is a function   of the wedge angle $\alpha$ and the characteristic
angle $\beta$ of the no-slip reflection.  }}
\label{directions}
\end{center}
\end{figure}

 For easy reference we record
  $$\alpha y_i=2\sin\phi\left(\begin{array}{c}0 \\(-1)^i\sin\phi \\\sin(\beta/2) \cos\phi\end{array}\right), \ \  S=\left(\begin{array}{crr}1 & 0 & 0 \\0 & -1 & 0 \\0 & 0 & -1\end{array}\right)$$
  and 
$$  A_2= A_1^t=\zeta^{-1}_1\zeta_2=\left(\begin{array}{ccc}1-2\cos^2(\beta/2)\cos^2\phi & -\sin\beta \cos^2\phi & \cos(\beta/2)\sin(2\phi) \\
-\sin\beta\cos^2\phi & 1-2\sin^2(\beta/2)\cos^2\phi & \sin(\beta/2)\sin(2\phi) \\
-\cos(\beta/2)\sin(2\phi) & -\sin(\beta/2)\sin(2\phi) & -\cos(2\phi)\end{array}\right).$$

Using  the notation $[z]_3:=z\cdot\epsilon_3$ and elementary computations based on the above gives:
\begin{proposition}
The return map in the coordinate system defined by $\Phi_1$  has the form
$$ T_2T_1(x,y)=\left(x+ [A_1(x-\alpha y_1)]_3 V(y), SA_1^tSA_1 y\right)$$
where 
$$ V(y)=\frac{ [y]_3 A_1^tSA_1y-  [A_1^tSA_1 y]_3y}{[A_1y]_3 [A_1^tSA_1 y]_3}.  $$
This vector satisfies:  $[V(y)]_3=0$ and $V(y_1)=0$. In particular, $T_2T_1(x,y_1)=(x,y_1)$ whenever $(x,y_1)$ is in the domain of $T_2T_1$.
\end{proposition}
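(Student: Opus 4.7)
The plan is to compute the composition $T_2 \circ T_1$ directly from the formula for $T_i$, exploiting the identities already recorded just above the statement ($A_2 = A_1^t$, $A_i y_i = -y_{\bar{i}}$, $SA_i y_i = y_{\bar{i}}$, $\zeta_{\bar{i}}^{-1}(q_i - q_{\bar{i}}) = -\alpha y_i$), together with one further easily verified identity, $Sy_i = -y_i$, which is immediate from the explicit column vectors for $y_i$ and the diagonal form of $S$.

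First I would apply $T_1$ to a generic state $(x,y)$, writing the result as $(x', y') = (w - \lambda u,\; SA_1 y)$ where $w := A_1(x - \alpha y_1)$, $u := A_1 y$ and $\lambda := [w]_3/[u]_3$. Then I would apply $T_2$ and use orthogonality $A_1^t A_1 = I$ together with $A_2 y_2 = -y_1$ to simplify the inner expression: $A_2(x' - \alpha y_2) = (x - \alpha y_1) - \lambda y + \alpha y_1 = x - \lambda y$. The crucial cancellation is between the $\alpha y_1$ shift carried through from $T_1$ and the $-\alpha A_2 y_2 = \alpha y_1$ introduced by $T_2$. Since $[x]_3 = 0$, one has $[x - \lambda y]_3 = -\lambda [y]_3$, and applying the second $Q$-projection (with direction vector $A_2 y' = A_1^t SA_1 y$) produces a correction term proportional to $[y]_3 A_1^t SA_1 y - [A_1^t SA_1 y]_3 y$. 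Pulling out the common scalar $[w]_3/([u]_3 [A_1^t SA_1 y]_3)$ yields exactly the displayed formula, with the stated vector $V(y)$.

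The three asserted properties then follow at once. The identity $[V(y)]_3 = 0$ is immediate by inspection of the antisymmetric numerator. For $V(y_1) = 0$, I would chain $SA_1 y_1 = y_2$ with $A_1^t y_2 = A_2 y_2 = -y_1$ to conclude $A_1^t SA_1 y_1 = -y_1$, whence the numerator of $V(y_1)$ becomes $[y_1]_3(-y_1) - [-y_1]_3 y_1 = 0$. Finally, the fixed-point claim combines this vanishing (for the $x$-coordinate) with the chain $SA_1^t SA_1 y_1 = SA_2 y_2 = -Sy_1 = y_1$, the last equality using $Sy_1 = -y_1$.

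No step is conceptually hard; the only subtle point is the cancellation of the two $\alpha y_i$ shifts during the composition, without which the expression would not factor with the $x$-dependence concentrated in the single scalar $[A_1(x-\alpha y_1)]_3$ and the $y$-dependence in a separate vector $V(y)$.
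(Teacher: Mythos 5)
Your computation is correct and is precisely the ``elementary computation based on the above'' that the paper invokes without writing out: composing the two instances of the formula for $T_i$, using $A_2=A_1^t$, $A_iy_i=-y_{\bar i}$ and $[x]_3=0$ to get the cancellation $A_2(x'-\alpha y_2)=x-\lambda y$, and then reading off $V(y)$ from the second $Q$-projection. The supplementary identity $Sy_i=-y_i$ that you verify (immediate since $[y_i]_1=0$) correctly yields $A_1^tSA_1y_1=-y_1$ and hence the vanishing of $V(y_1)$ and the fixed-point property, so nothing is missing.
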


 In order to study this return map in a neighborhood of $(x,y_1)$ 
 we use spherical coordinates about the axis $y_1$:
 \begin{equation}\label{y} y= \cos\psi\, y_1 + \sin\psi\cos\varphi\, \hat{\epsilon}_1 + \sin\psi\sin\varphi\, \hat{\epsilon}_2\end{equation}
 where 
$$ \hat{\epsilon}_1:=\epsilon_1,\ \  \hat{\epsilon}_2:=\frac1{\sqrt{1-\cos^2(\beta/2)\cos^2\phi}}(\sin(\beta/2)\cos\phi\, \epsilon_2+\sin\phi\, \epsilon_3),\ \  \hat{\epsilon}_3:=y_1 $$
form an orthonormal frame.     See Figure \ref{directions}.   (Notice the typographical  distinction between   the corner angle $\phi$ of the wedge domain  and the spherical coordinate $\varphi$.)
Let 
$$\left(X(x,\varphi, \psi), Y(x, \varphi,\psi)\right):= T_2T_1(x, \cos\psi\,  y_1+\sin\psi\cos\varphi \, \hat{\epsilon}_1 + \sin\psi\sin\varphi\,  \hat{\epsilon}_2) $$
and define
$$w:=w(\varphi):= \cos\varphi\, \hat{\epsilon}_1 + \sin\varphi\, \hat{\epsilon}_2.$$
Thus  we may write $y=\cos\psi\left(y_1+\tan\psi w(\varphi)\right)$. Since the rotation $S_2:=SA_1^tSA_1$ fixes $y_1$, it acts on $w$ as
$S_2w(\varphi)=w(\varphi+\theta)$ for some constant angle $\theta$. 
It follows that 
$$ S_2y= \cos\psi\, y_1 +\sin\psi\, w(\varphi+\theta).$$
     The following proposition summarizes these observations and notations.

   \begin{proposition}\label{R}
   For points $y\in  S^2_+$ in a  neighborhood of $y_1$   we adopt spherical coordinates relative to the axis $y_1=\hat{\epsilon}_3$, so that  $y=\cos\psi \left(y_1+ \tan\psi\, w(\varphi)\right)$ where 
   $$w:=w(\varphi):= \cos\varphi\, \hat{\epsilon}_1 + \sin\varphi\,\hat{\epsilon}_2.$$
   See Figure \ref{directions}.
   We also use  the notations $[z]_3:=z\cdot\epsilon_3$,
   $S_1:= A_1^{-1}SA_1$, and  $S_2=SA_1^{-1}SA_1$.
  Let $R:=T_2T_1$ be the $2$-step return map as defined above, whose domain contains a neighborhood of $(x,y_1)$ for all $x\in \mathbb{R}^2$.
  Then $R(x,y_1)=(x,y_1)$ for all $x$ and 
  $$R(x,  y_1+ \tan\psi\, w(\varphi)) =(X,y_1+ \tan\psi\, S_2w(\varphi))=(X, \tan\psi\, w(\varphi+\theta))$$ for an angle $\theta$, depending only on the wedge angle $2\phi$  and the characteristic angle $\beta$ of the no-slip reflection, such that
\begin{align*}
\cos \theta &=(S_2\hat{\epsilon}_1)\cdot \hat{\epsilon}_1= 1-8\delta^2+8\delta^4\\
\sin\theta &=(S_2\hat{\epsilon}_1)\cdot\hat{\epsilon}_2 =4\delta(1-2\delta^2)\sqrt{1-\delta^2}
\end{align*}
  where $\delta:=\cos(\beta/2)\cos\phi$.
  Writing $(X, \Phi, \Psi)=R(x, \varphi,\psi)$ we have
\begin{equation}
R: \begin{cases}
X&=x + \tan \psi\frac{[A_1(x-\gamma y_1)]_3}{[y_1]_3}
\frac{ (I+S_1)w-\frac{[(I+S_1)w]_3 y_1}{[y_1]_3} + \tan\psi \frac{[w]_3 S_1w- [S_1w]_3 w}{[y_1]_3}}{1 - \tan\psi\left(\frac{[(A_1+S_1)w]_3}{[y_1]_3} - \tan\psi\frac{[A_1w]_3[S_1w]_3}{[y_1]_3^2}\right)}\\
\Phi &=\varphi + \theta\\
\Psi &=\psi
\end{cases}
\end{equation}
Denoting $\mu_1:=\zeta_1^{-1}\epsilon_3\in \mathbb{R}^2$,  we further have $X(x+s\mu_1, \varphi,  \psi)=X(x,\varphi,\psi)+s\mu_1.$
\end{proposition}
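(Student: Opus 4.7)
The plan is to carry out $T_2 \circ T_1$ explicitly, exploiting $A_2 = A_1^t$, the orthogonality of $A_i$, $\zeta_i$, $S$, and the identities $A_i y_i = -y_{\bar i}$ and $SA_i y_i = y_{\bar i}$ already recorded. Composing the two formulas for $T_i$ gives
\[
T_2 T_1(x,y) = \bigl(Q\bigl(A_1^t(x_1 - \alpha y_2),\; S_1 y\bigr),\; S_2 y\bigr),
\]
where $x_1 = Q(A_1(x - \alpha y_1),\, A_1 y)$, $S_1 = A_1^{-1} S A_1$, and $S_2 = S A_1^{-1} S A_1 \in SO(3)$. Applying the two listed identities in sequence yields $S_2 y_1 = S A_1^t y_2 = S A_2 y_2 = y_1$, so $S_2$ restricts to a rotation of the plane $y_1^\perp$ equipped with the orthonormal pair $(\hat\epsilon_1, \hat\epsilon_2)$. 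Writing $y = \cos\psi\, y_1 + \sin\psi\, w(\varphi)$, the rotational action gives immediately $\Psi = \psi$ and $\Phi = \varphi + \theta$, where $\theta$ is the rotation angle of $S_2|_{y_1^\perp}$.

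To extract $\theta$, I would compute $\cos\theta = \hat\epsilon_1 \cdot S_2 \hat\epsilon_1$ and $\sin\theta = \hat\epsilon_2 \cdot S_2 \hat\epsilon_1$ directly from the displayed form of $A_1$ and the formulas for $\hat\epsilon_1, \hat\epsilon_2$. A double-angle simplification in the parameter $\delta = \cos(\beta/2)\cos\phi$ then produces the stated $\cos\theta = 1 - 8\delta^2 + 8\delta^4$ and $\sin\theta = 4\delta(1-2\delta^2)\sqrt{1-\delta^2}$; these also identify $\theta$ as twice the angle between $u_1(q_1)$ and $u_1(q_2)$ from equation~(\ref{inner}), a geometrically suggestive observation that clarifies why only $\delta$ (and not $\beta$ and $\phi$ separately) controls the rotation.

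The $X$-formula is the main obstacle. After substituting $x_1$ into the outer $Q$ and using $A_1 y_1 = -y_2$ to absorb the offset, only the component of $y$ transverse to $y_1$ contributes at leading order; writing $y = \cos\psi\, y_1 + \sin\psi\, w$ and extracting the overall factor $\tan\psi$ (after dividing through by $\cos\psi$), together with $S_1 y_1 = -y_1$ (which follows from the stated identities), causes the displayed combinations $(I + S_1)w$ and $[w]_3 S_1 w - [S_1 w]_3 w$ to appear as the coefficients of $\tan\psi$ and $\tan^2\psi$ in the numerator, while the compound denominator $[A_1 y]_3 \cdot [S_1 y]_3$ factors as $[y_1]_3^2\cos^2\psi$ times the explicit $\bigl(1 - \tan\psi(\ldots)\bigr)$ expression in the statement. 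Specializing $\psi = 0$ then immediately gives $R(x, y_1) = (x, y_1)$ for every $x$. Finally, for the translation symmetry, $A_1 \mu_1 = \zeta_2^{-1}\zeta_1\zeta_1^{-1}\epsilon_3 = \zeta_2^{-1}\epsilon_3 = \mu_2$; a quick inspection of the third rows of $\zeta_1$ and $\zeta_2$ shows $\mu_1 = \mu_2 = (\sin(\beta/2), \cos(\beta/2), 0)^t$ with $[\mu_1]_3 = 0$, so replacing $x$ by $x + s\mu_1$ adds $s\mu_1$ to the leading term of $X$ while the $x$-dependent correction, which enters only through $[A_1(x - \alpha y_1)]_3$, picks up $s[A_1 \mu_1]_3 = 0$ and so is unchanged.
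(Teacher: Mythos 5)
Your proposal is correct and follows essentially the same route the paper takes: the proposition is presented there as a summary of the preceding computations (the explicit form of $T_i$, the un-numbered proposition giving $T_2T_1(x,y)=(x+[A_1(x-\alpha y_1)]_3V(y),\,S_2y)$, and the spherical-coordinate substitution), and your composition of the two charts, the use of $A_iy_i=-y_{\bar i}$, $SA_iy_i=y_{\bar i}$, $S_1y_1=-y_1$, the factorization of $[A_1y]_3[S_1y]_3$, and the observation $A_1\mu_1=\mu_2$ with $[\mu_2]_3=0$ reproduce exactly those steps. The added remark that $\cos\theta=2(1-2\delta^2)^2-1$ identifies $\theta$ as twice the angle between $u_1(q_1)$ and $u_1(q_2)$ is correct and a nice geometric gloss not made explicit in the paper.
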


Since $\psi$ remains constant under iterations of the return map $R=T_2T_1$, we regard $\psi$ as a fixed parameter. We are interested in 
small values of $r:=\tan\psi$.
Notice that $[A_1z]_3:=(A_1 z)\cdot\epsilon_3=z\cdot(A_1^t \epsilon_3)=\mu_0\cdot z$, where 
$$\mu_0:=A_1^t\epsilon_3=\left(\begin{array}{c}\cos(\beta/2) \sin (2\phi) \\ \sin(\beta/2)\sin(2\phi) \\-\cos(2\phi)\end{array}\right).$$
Write $x_0:=\alpha y_1$, so
$$x_0=2\sin\phi\left(\begin{array}{c}0 \\ -\sin\phi \\ \sin(\beta/2)\cos\phi\end{array}\right). $$
Then the proposition shows that $R$ has the form
\begin{equation}\label{map}
R: (x, \varphi)\mapsto \left(X= x + \mu_0\cdot (x-x_0) V_r(\varphi), 
\Phi= \varphi+\theta\right)
\end{equation} 
where the vector $V_r(\varphi)$ can be made arbitrarily (uniformly) small by choosing $\psi$ (or $r=\tan\psi$) sufficiently close to $0$.  Observe from the explicit form  $$V_r(\varphi)=
\frac{1}{[y_1]_3}\frac{ r\left((I+S_1)w-\frac{[(I+S_1)w]_3 y_1}{[y_1]_3}\right) +r^2 \frac{[w]_3 S_1w- [S_1w]_3 w}{[y_1]_3}}{1 - r\frac{[(A_1+S_1)w]_3}{[y_1]_3} + r^2\frac{[A_1w]_3[S_1w]_3}{[y_1]_3^2}}$$ 
 that $V_r(\varphi)\cdot \epsilon_3=0$
so that $X$ is indeed in  $\mathbb{R}^2$.

\begin{proposition}\label{inv}
The quantity $1+\mu_0 \cdot V_r(\varphi)$ satisfies the coboundary relation
\begin{equation}\label{coboundary}1+\mu_0\cdot V_r(\varphi)=\frac{\rho(\varphi)}{\rho(\varphi+\theta)}\end{equation}
where $$\rho(\varphi)=1+ r \frac{\tan\phi}{\sin(\beta/2)}\sin \varphi.$$
In fact, the transformation $R$ on the $3$-dimensional space $\mathbb{R}^2\times \mathbb{R}/(2\pi \mathbb{Z})$,  obtained by fixing a value of $\psi$ (hence of
$r=\tan\psi$),   leaves invariant
the measure   $$d\mu= c\left(1 + r \frac{\tan\phi}{\sin(\beta/2)}\sin \varphi\right)dA\, d\varphi$$
where $c$ is a positive constant (only dependent on the fixed parameters $\beta, \psi, \phi$) and $A$ is the standard area measure on $\mathbb{R}^2$.
\end{proposition}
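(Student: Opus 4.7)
The plan is to deduce both assertions from the invariance of the Liouville measure $|\Omega|$ established in Section~\ref{measure reversible}, together with a short Jacobian computation for $R$. The geometric content is that $\rho(\varphi)$ is, up to a $\psi$-dependent constant, the density of Liouville measure along a $\psi$-fiber.

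First, I would rewrite the Liouville measure in the spherical coordinates of Proposition~\ref{R}. Since $\zeta_1$ is orthogonal, the parametrization $\Phi_1$ pulls $|\Omega|=(v\cdot \nu)\,dA_{\partial M}(q)\,dA_N(v)$ back to $y_3\,dA(x)\,dA_{S^2}(y)$ on $\mathbb{R}^2\times S^2_+$. Using the explicit formulas for $\hat\epsilon_1,\hat\epsilon_2,\hat\epsilon_3=y_1$ preceding Proposition~\ref{R} and writing $y=\cos\psi\,\hat\epsilon_3+\sin\psi\cos\varphi\,\hat\epsilon_1+\sin\psi\sin\varphi\,\hat\epsilon_2$, a short computation of $y\cdot\epsilon_3$ yields
$$y_3=\frac{\cos\psi\sin(\beta/2)\cos\phi}{\sqrt{1-\cos^2(\beta/2)\cos^2\phi}}\left(1+\tan\psi\,\frac{\tan\phi}{\sin(\beta/2)}\sin\varphi\right)=c_0\,\cos\psi\,\rho(\varphi),$$
for a positive constant $c_0=c_0(\beta,\phi)$. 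Combined with $dA_{S^2}(y)=\sin\psi\,d\psi\,d\varphi$, the Liouville measure becomes $c_0\cos\psi\sin\psi\,\rho(\varphi)\,dA\,d\varphi\,d\psi$.

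Next, $R=T_2\circ T_1$ is the composition of two restrictions of the billiard map, so it preserves $|\Omega|$; and by Proposition~\ref{R} it preserves the $\psi$ coordinate. Disintegrating $|\Omega|$ over the fibers of the projection $(x,\varphi,\psi)\mapsto\psi$ gives, for each fixed $\psi$ (equivalently $r=\tan\psi$), an $R$-invariant measure proportional to $\rho(\varphi)\,dA\,d\varphi$ on $\mathbb{R}^2\times \mathbb{R}/(2\pi\mathbb{Z})$. This establishes the second assertion of the proposition.

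The coboundary identity then follows from a Jacobian computation. From (\ref{map}), $R$ has the form $X=x+(\mu_0\cdot(x-x_0))V_r(\varphi)$, $\Phi=\varphi+\theta$. Since $V_r(\varphi)$ has vanishing third component (inspection of the numerator in its explicit formula shows that each of the two bracketed vectors has zero $\epsilon_3$-component), $\mu_0\cdot V_r=\bar\mu_0\cdot V_r$ and the spatial Jacobian $\partial X/\partial x=I_2+V_r(\varphi)\bar\mu_0^{t}$ is a rank-one perturbation of the identity whose determinant equals $1+\mu_0\cdot V_r(\varphi)$, while $\partial\Phi/\partial\varphi=1$. Applying the change-of-variables formula to the $R$-invariance of $\rho(\varphi)\,dA\,d\varphi$ forces
$$\rho(\varphi)=\rho(\varphi+\theta)\bigl(1+\mu_0\cdot V_r(\varphi)\bigr),$$
the absolute value being dropped by continuity from the identity at $r=0$; this is exactly (\ref{coboundary}). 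The main obstacle is really just the identification of $y_3$ with $c_0\cos\psi\,\rho(\varphi)$; once that is in hand, the remaining steps are essentially bookkeeping. A brute-force verification of (\ref{coboundary}) starting from the explicit formula for $V_r(\varphi)$ would in principle be possible but looks considerably more painful and would obscure the geometric meaning of $\rho$.
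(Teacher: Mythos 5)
Your proposal is correct and follows essentially the same route as the paper: both identify the fiber density $y\cdot\epsilon_3$ in the spherical coordinates of Proposition~\ref{R} with $c_0\cos\psi\,\rho(\varphi)$, invoke invariance of the canonical measure restricted to a fixed-$\psi$ slice, and read off the coboundary identity from the change-of-variables formula. The only difference is that you spell out the rank-one Jacobian determinant $1+\mu_0\cdot V_r(\varphi)$ explicitly, which the paper leaves as "an easy consequence."
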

\begin{proof}
The canonical invariant measure on $\mathbb{R}^2\times S^2_+$ has the form $y\cdot \epsilon_3\, dA\, dA_S$, where  $A_S$ is the area measure on $S^2_+$.
For a fixed value of $\psi$ we obtain an invariant measure on $\mathbb{R}^2\times S^1$ of the form $y\cdot \epsilon_3\, dA\, d\varphi$. Using the form of $y$ given by (\ref{y}), one   obtains  $$y\cdot \epsilon_3=\frac{\cos\psi \cos\phi\sin(\beta/2)}{\sqrt{1-\cos^2(\beta/2)\cos^2\phi}}\left(1 + r \frac{\tan\phi}{\sin(\beta/2)}\sin \varphi\right).$$
This shows that, up to a multiplicative constant, the invariant measure $\mu$ has the indicated form.   Equation (\ref{coboundary}) is an easy consequence of the invariance of $\mu$ with respect to $R$. 
\end{proof}

\begin{figure}[htbp]
\begin{center}
%\epsfile{file=bundle.eps,scale=0.8}
\includegraphics[width=1.7 in]{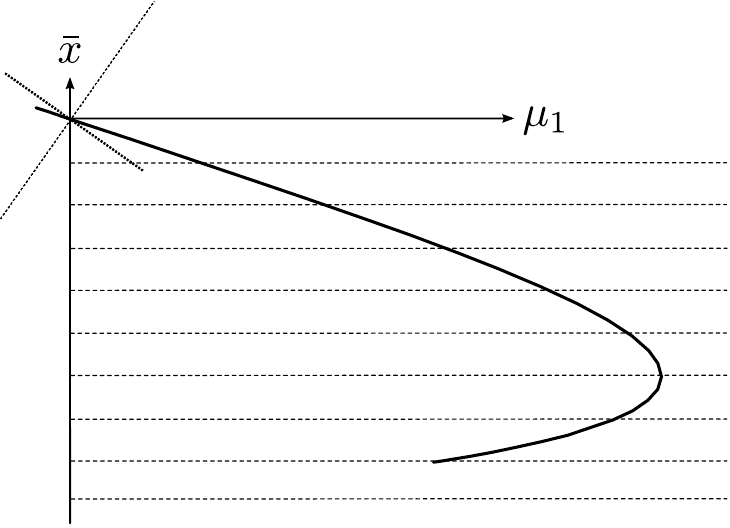}\ \ 
\caption{\small{The map $R$ sends fibers $x+ \mathbb{R}\mu_1$ onto other such fibers preserving length. That is, $dR_{(x,\varphi)}\mu_1=\mu_1$.
The quotient is a measure preserving transformation on $\mathbb{R}\times \mathbb{T}^1$. The coordinate on the first factor of the quotient is 
$\bar{x}=x\cdot \mu_0$. The curve shown above is typical  of the set to which orbits of $R$ project in $\mathbb{R}^2$. }}
\label{InvSet}
\end{center}
\end{figure}

By using the coordinate system $(\bar{x},\bar{y})\mapsto \bar{x}\mu_0+\bar{y}\mu_1$ on $\mathbb{R}^2$, the area measure is $dA=d\bar{x}\, d\bar{y}$ and, as observed at the end of Proposition \ref{R}, the transformation $R$ maps the fibers of the 
projection $(\bar{x},\bar{y})\mapsto \bar{x}$ to fibers preserving the length measure on fibers. Thus we obtain  a transformation $\bar{R}$ on $\mathbb{R}\times S^1$ preserving
the measure $ d\bar{\mu}(\bar{x},\varphi)= \rho(\varphi)\, d\bar{x}\, d\varphi$
where $\rho(\varphi)$ has the stated expression. 
Using the  quotient coordinates $\bar{x}=x\cdot\mu_0$ and $\phi$ and writing $\overline{V}_r(\varphi):=V_r(\varphi)\cdot \mu_0$ we obtain
$$\overline{R}(\bar{x}, \phi)= \left((1+ \overline{V}_r(\varphi)) \bar{x} -\bar{x}_0 \overline{V}_r(\varphi) , \phi+\theta\right). $$
In particular,
$$ \overline{X}=\frac{\rho(\varphi)}{\rho(\varphi+\theta)} \bar{x} + \left(1-\frac{\rho(\varphi)}{\rho(\varphi+\theta)}\right)\bar{x}_0.$$
The invariant measure is
$$d\bar{\mu}(\bar{x},\varphi)=\rho(\varphi) d\bar{x} d\varphi $$
where $\rho(\varphi)$ is the density given in Proposition \ref{inv}.
It is now immediate that
$$ \overline{R}^n(\bar{x}, \varphi)= \left(\frac{\rho(\varphi)}{\rho(\varphi+n\theta)}\bar{x}+\left(1-\frac{\rho(\varphi)}{\rho(\varphi+n\theta)}\right)\bar{x}_0,\varphi+n\theta\right).$$ 
This shows that all the iterates of $(\bar{x},\varphi)$ remain uniformly close to the initial point for small values of $\psi$.  Also notice
that $(\zeta_1\mu_0)\cdot e_{2,1}= \nu_2\cdot e_{2,1}=\sin(2\phi)>0$. This means that if $\bar{x}$ remains bounded, the length coordinate
along the base of $\mathcal{P}_1$ also must be similarly bounded.  From this we conclude:

\begin{corollary}\label{local_stability} Assume the notation introduced at the beginning of this section.
For all $q\in \mathcal{P}_i\setminus (\mathcal{P}_1\cap \mathcal{P}_2)$, $i=1,2$,   and any  neighborhood $\mathcal{V}$ of the period-$2$ state  $(q, v_i)\in S^+_i$, there
exists  a small enough neighborhood $\mathcal{U}\subset \mathcal{V}$  of $(q, v_i)$  the orbits of whose points remain in $ \mathcal{V}$.
\end{corollary}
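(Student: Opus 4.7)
The plan is to assemble the corollary from the iterate formula developed in the paragraph immediately preceding it. Fix $i=1$ without loss of generality and work in the coordinate chart $\Phi_1$ of Proposition \ref{R}. The period-$2$ state $(q, v_1)$ corresponds to some $(x_\ast, \varphi_\ast, \psi = 0)$, and a neighborhood $\mathcal{V}$ of $(q, v_1)$ pulls back to a neighborhood of this point in $(x, \varphi, \psi)$-coordinates. Since $\psi$ is preserved by the return map $R = T_2 T_1$ (Proposition \ref{R}), I treat $r := \tan\psi$ as a fixed small parameter and focus on the $(x, \varphi)$-evolution.

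The identity $dR_{(x,\varphi)} \mu_1 = \mu_1$ shows that $R$ maps $\mu_1$-fibers isometrically onto $\mu_1$-fibers, so it suffices to control the quotient dynamics $\overline{R}$ on $(\bar{x}, \varphi) \in \mathbb{R} \times S^1$, where $\bar{x} = x \cdot \mu_0$. The explicit iterate
\begin{equation*}
\overline{R}^n(\bar{x}, \varphi) = \Bigl(\tfrac{\rho(\varphi)}{\rho(\varphi+n\theta)}\, \bar{x} + \Bigl(1 - \tfrac{\rho(\varphi)}{\rho(\varphi+n\theta)}\Bigr)\bar{x}_0,\; \varphi + n\theta\Bigr),
\end{equation*}
together with the fact that $\rho(\varphi) = 1 + r\,\tan\phi/\sin(\beta/2)\,\sin\varphi \to 1$ uniformly in $\varphi$ as $r \to 0$, shows that the ratio $\rho(\varphi)/\rho(\varphi+n\theta)$ tends to $1$ uniformly in $n$ and $\varphi$. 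Hence for $r$ and $|\bar{x} - \bar{x}_\ast|$ sufficiently small, every iterate $\overline{R}^n(\bar{x}, \varphi)$ stays within a prescribed neighborhood of the starting point.

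To pull this back to the full configuration, I use $(\zeta_1 \mu_0) \cdot e_{2,1} = \nu_2 \cdot e_{2,1} = \sin(2\phi) \neq 0$, which holds precisely because $q \notin \mathcal{P}_1 \cap \mathcal{P}_2$; this guarantees that boundedness of $\bar{x}$ implies boundedness of the length coordinate along the base of $\mathcal{P}_1$, so the $(x, \varphi, \psi)$-neighborhood translates to a genuine neighborhood in $\mathcal{P}_1 \times S^+_1$. Finally, to control the orbit of the billiard map $T$, not merely of the two-step return $R$, I shrink $\mathcal{U}$ once more so that $T_1(\mathcal{U})$ lies in a prescribed small neighborhood of $(q_2, v_2) \in \mathcal{P}_2 \times S^+_2$, which is possible by continuity of $T_1$ at $(q, v_1)$; combined with the symmetric analysis for the return map $T_1 T_2$ on $\mathcal{P}_2$, this yields the claimed invariant neighborhood.

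The substantive step, already secured earlier, is the coboundary identity of Proposition \ref{inv}, which causes the affine recursion in $\bar{x}$ to telescope instead of drifting linearly in $n$; without it the orbit could in principle escape $\mathcal{V}$ no matter how small $\mathcal{U}$ is chosen.
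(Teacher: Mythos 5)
Your proposal is correct and follows essentially the same route as the paper: fix $\psi$, pass to the quotient coordinate $\bar{x}=x\cdot\mu_0$, use the coboundary identity of Proposition \ref{inv} to telescope the affine recursion into the explicit formula for $\overline{R}^n$, and invoke $(\zeta_1\mu_0)\cdot e_{2,1}=\sin(2\phi)>0$ to convert boundedness of $\bar{x}$ into boundedness along the base of $\mathcal{P}_1$. Your additional remark about shrinking $\mathcal{U}$ so that the intermediate state $T_1(\mathcal{U})$ also stays in a prescribed neighborhood of $(q_2,v_2)$ is a point the paper leaves implicit, and is a welcome bit of extra care.
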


Because any (bounded) polygonal billiard shape must have a corner with angle less than $\pi$, the following corollary holds.

\begin{theorem} Polygonal no-slip billiards cannot be ergodic for the canonical invariant measure.\end{theorem}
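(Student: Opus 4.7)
The plan is to exhibit an open $T$-invariant set of positive but not full Liouville measure, constructed as a stability neighborhood of a corner period-$2$ orbit. This directly contradicts ergodicity with respect to the canonical measure.

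First, the combinatorial input. Every bounded simple polygon has at least one interior vertex with angle strictly less than $\pi$, since the interior angles of an $n$-gon sum to $(n-2)\pi$, so their average $\pi-2\pi/n$ is strictly less than $\pi$. Fix such a vertex $v_\star$ with interior angle $2\phi\in(0,\pi)$, let $E_1,E_2$ be the two edges of the polygon meeting at $v_\star$, and let $\ell_1,\ell_2$ denote the lines that extend them. Choose a point $q_1\in E_1$ close enough to $v_\star$ that the wedge period-$2$ trajectory based at $q_1$ (connecting $q_1$ to its partner $q_2\in\ell_2$) lies entirely in the polygon, with $q_2$ strictly interior to $E_2$, and is uniformly separated from every edge of the polygon other than $E_1,E_2$. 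This is possible because as $q_1\to v_\star$ along $E_1$ the associated period-$2$ segment shrinks toward $v_\star$, where the polygon boundary coincides with $\ell_1\cup\ell_2$.

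Now apply Corollary \ref{local_stability} to the infinite wedge bounded by $\ell_1,\ell_2$ at the period-$2$ state $\xi_1=(q_1,v_1)$: for every neighborhood $\mathcal{V}$ of $\xi_1$ there is a smaller open neighborhood $\mathcal{U}\subset\mathcal{V}$ whose orbits under the wedge billiard remain in $\mathcal{V}$ for all $n\in\mathbb{Z}$ (bi-directional stability is visible in the explicit form of $\overline{R}^n$ derived just before Corollary \ref{local_stability}, and is in any case a consequence of the time reversibility of Section \ref{measure reversible}). Shrink $\mathcal{V}$ in both position and velocity components so that (i)~every point of $\mathcal{V}$ projects to a position in the interior of $E_1$; (ii)~every free-flight segment issuing from a state in $\mathcal{V}$ terminates in the interior of $E_2$ without ever leaving the polygon; and (iii)~the symmetric statement holds around $\xi_2$. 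Under (i)--(iii) the polygon billiard map agrees with the wedge billiard map on the relevant region of phase space, so orbits of $\mathcal{U}$ under the polygon dynamics are genuinely wedge orbits and remain in $\mathcal{V}$. The set $\widetilde{\mathcal{U}}:=\bigcup_{n\in\mathbb{Z}} T^n\mathcal{U}\subset\mathcal{V}$ is therefore open and $T$-invariant, has positive Liouville measure (being open), and is confined to a bounded corner neighborhood, so it is a proper subset of $N$ of strictly smaller measure. Hence $T$ is not ergodic.

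The main obstacle is the geometric localization in (i)--(iii): one needs concrete position-space bounds on the iterates of the return map that persist uniformly in $n$, not merely the qualitative ``small $\mathcal{V}$'' statement of Corollary \ref{local_stability}. These are obtained from the closed-form expression for $\overline{R}^n(\bar{x},\varphi)$ derived at the end of Section \ref{wedge section}, which shows that $\overline{R}^n$ fixes $\bar{x}_0$ and multiplies $\bar{x}-\bar{x}_0$ by the bounded factor $\rho(\varphi)/\rho(\varphi+n\theta)$, together with the uniform lower bound on $\rho$ for small $\tan\psi$ coming from Proposition \ref{inv}. Once these uniform estimates are in hand, arranging (i)--(iii) by shrinking $\mathcal{V}$ is routine.
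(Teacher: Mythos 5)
Your proof is correct and follows essentially the same route as the paper, which deduces the theorem directly from the fact that every bounded polygon has a corner of interior angle less than $\pi$ together with the wedge stability result of Corollary \ref{local_stability}. You simply make explicit the localization steps (agreement of the polygon and wedge dynamics near the corner, bi-directional invariance, and the construction of the open invariant set) that the paper leaves implicit.
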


\section{Higher order periodic orbits in polygons}\label{higher order polygon}
The analysis of the previous section is based on the existence of period-$2$ orbits in wedge-shaped no-slip billiard tables. 
Existence of periodic orbits of higher periods is in general difficult to establish, although one such result for wedge domains will be indicated below in this section.
We first point out a generalization of Corollary \ref{local_stability} to perturbations of periodic orbits in general polygon-shaped  domains.

\begin{figure}[htbp]
\begin{center}
%\epsfile{file=bundle.eps,scale=0.8}
%\includegraphics[width=1.7 in]{near_periodic.eps}
%\hspace{1 in}
\includegraphics[width=4.5 in]{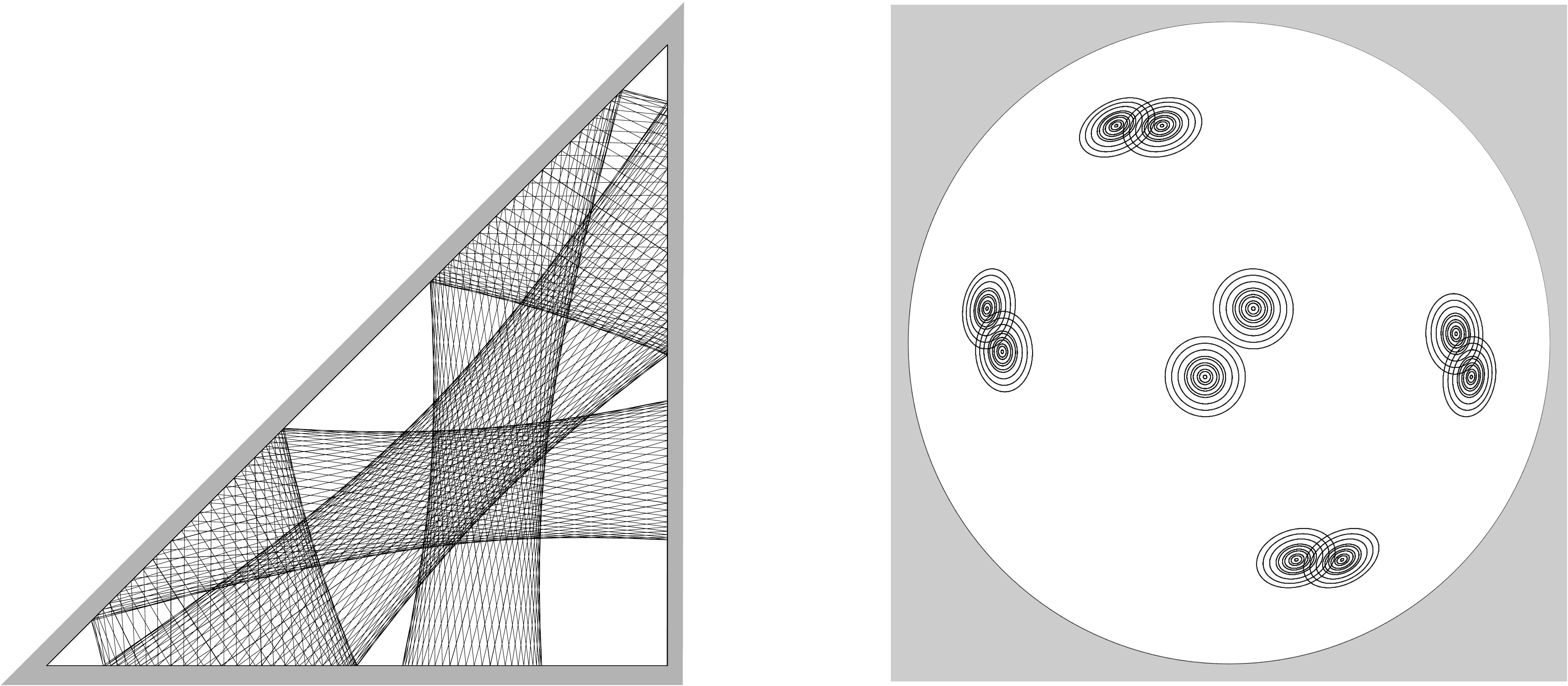}\ \ 
\caption{\small{Projection to the plane  of orbits in the neighborhood of a period-$10$ periodic orbit of a triangular no-slip billiard domain (left) showing typical stable behavior, along with the velocity phase portrait projections (right). }}
\label{near_periodic}
\end{center}
\end{figure} 

Figure \ref{near_periodic} illustrates the type of stability implied by the following Theorem \ref{polygon stability}.
\begin{theorem}\label{polygon stability}
Periodic orbits in no-slip polygon-shaped billiard domains are locally stable. That is, given an initial state $\xi_0=(q_0,v_0)$ for a  period-$n$ orbit in such a billiard system, and
for any neighborhood $\mathcal{V}$ of $\xi_0$, there exists  a small enough neighborhood $\mathcal{U}\subset \mathcal{V}$ of $\xi$ the orbits of whose elements  remain in $\mathcal{V}$. 
\end{theorem}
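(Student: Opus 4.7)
My plan is to adapt the analysis of Section~\ref{wedge section}, where the two-step return map $T_2T_1$ between two planar faces was handled, to the $n$-step return map $R:=T^n$ along a period-$n$ orbit. Let $\xi_0=(q_0,v_0)$ have successive collisions at $q_0,q_1,\dots,q_{n-1}$ on faces $\mathcal{P}_{i_0},\dots,\mathcal{P}_{i_{n-1}}$ of the polygon, with $\xi_j=T^j\xi_0$. Because every face is flat, $\kappa\equiv 0$ on each $\mathcal{P}_{i_j}$ away from the corners (which the orbit avoids), so by Corollary~\ref{cor} the curvature correction in the differential of $T$ at each $\xi_j$ disappears. Using on each $\mathcal{P}_{i_j}$ the constant product/eigen-frame coordinates $\Phi_{i_j}(x,y)=(q_j+\zeta_{i_j}x,\zeta_{i_j}y)$ introduced in Section~\ref{wedge section}, each single-step map $T_j\colon\Phi_{i_j}^{-1}(\mathcal{V}_j)\to\Phi_{i_{j+1}}^{-1}(\mathcal{V}_{j+1})$ takes exactly the algebraic form $(x,y)\mapsto(Q(A_j(x-\mathrm{off}_j),A_jy),SA_jy)$ of the wedge case, with orthogonal $A_j,S$ determined by the two successive face-frames.

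Composing the $n$ single-step maps, I expect $R$ to admit a presentation
\begin{equation*}
R(x,y)\;=\;\bigl(\,x+(\mu_0\cdot(x-x_*))\,V(y),\;\mathcal{S}y\,\bigr),
\end{equation*}
with $\mathcal{S}\in SO(3)$ satisfying $\mathcal{S}y_0=y_0$ (this being precisely the lift of the periodicity condition $R\xi_0=\xi_0$ to velocity coordinates), $\mu_0$ and $x_*$ determined by the sequence of face normals, and $V(y_0)=0$. Writing $y=\cos\psi\,y_0+\sin\psi\,w(\varphi)$ in spherical coordinates about $y_0$ then makes $\psi$ a conserved quantity and reduces the action on $\varphi$ to a rigid rotation $\varphi\mapsto\varphi+\theta$ by an angle $\theta$ determined by $\mathcal{S}$ (hence by the orbit's geometry and $\beta$). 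The action on $x$ becomes $X=x+(\mu_0\cdot(x-x_*))V_r(\varphi)$ with $r:=\tan\psi$, and $V_r(\varphi)$ uniformly small for small $r$.

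To pass from this description to uniform control of orbits, I would introduce the null direction $\mu_1$ of the linear form $z\mapsto\mu_0\cdot z$ on $T_{q_0}\mathcal{P}_{i_0}$, observe that $R$ preserves the fibration by lines $x+\mathbb{R}\mu_1$ (translating along $\mu_1$ isometrically), and quotient out by this fibration as at the end of Section~\ref{wedge section}. On the quotient $\mathbb{R}\times S^1$, I would check a coboundary identity
\begin{equation*}
1+\mu_0\cdot V_r(\varphi)\;=\;\frac{\rho(\varphi)}{\rho(\varphi+\theta)}
\end{equation*}
analogous to Proposition~\ref{inv}, with $\rho(\varphi)$ read off from the component of the velocity along the ambient normal after the spherical parametrization. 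The quotient map then becomes the explicit affine map $\overline{R}(\bar x,\varphi)=\bigl(\tfrac{\rho(\varphi)}{\rho(\varphi+\theta)}(\bar x-\bar x_*)+\bar x_*,\,\varphi+\theta\bigr)$, whose iterates stay uniformly close to $\bar x_*$ for small $r$; transferring back through the bounded change of coordinates along $\mu_1$ then yields a neighborhood $\mathcal{U}\subset\mathcal{V}$ whose orbits remain in $\mathcal{V}$.

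The principal obstacle is purely algebraic bookkeeping: verifying that the composition of the $n$ wedge-type single-step maps actually retains the $(\mu_0,y_0,\mu_1)$-fibration structure, and identifying the correct density $\rho(\varphi)$ so that the coboundary identity holds. For $n=2$ this was done by explicit calculation; for general $n$ one can in principle extract $\mu_0,\mu_1,y_0$ and $\rho$ from the product of the $n$ orthogonal matrices $SA_j$ together with the face normals $\nu_{i_j}$, but organizing these identifications cleanly and checking invariance of the measure in one stroke is the technically delicate part of the argument.
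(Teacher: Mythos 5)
Your proposal follows essentially the same route as the paper's own (outline) proof: reduce $R=T^n$ in convenient face-adapted coordinates to an affine-in-$x$ map of the form $x\mapsto x_*+A(\varphi)(x-x_*)$ with $\varphi\mapsto\varphi+\theta$, quotient by the rotation-invariance direction, and use invariance of the canonical measure to obtain the coboundary identity $a(\varphi)=\rho(\varphi)/\rho(\varphi+\theta)$, whence boundedness of iterates. The "algebraic bookkeeping" you flag as the delicate step is likewise left as an outline in the paper, which asserts rather than computes the composed form of the $n$ single-step maps.
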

\begin{proof}
The idea is essentially the same as used in the proof of Proposition \ref{inv} and Corollary \ref{local_stability}. We only indicate the outline. By a choice of convenient coordinates around the periodic point, it is possible to show that the $n$-th iterate of the billiard map $T$, denoted $R:=T^n$, can be regarded as a map from an open subset of $\mathbb{R}^2\times S^1$ into this latter set, having the form $R(x,\varphi)= (x_0+ A(\varphi)(x-x_0), \varphi+\theta)$ for a certain angle $\theta$, where $A(\varphi)$ is a linear transformation independent of $x$.  Rotation invariance implies that $R$   must satisfy the invariance property $R(x+su,\varphi)=R(x,\varphi)+s u$ for a   vector $u\in \mathbb{R}^2$. From this we define a map $\overline{R}$ on (a subset of) the quotient $\mathbb{R}\times S^1$, $\mathbb{R}^2/\mathbb{R}u$. Furthermore, denoting by $(\bar{x}, \varphi)$ the coordinates in this quotient space,    invariance of the canonical measure implies invariance of a measure $\mu$ on this quotient having the form $d\mu(\bar{x},\varphi)=\rho(\varphi)\, d\bar{x}\, d\varphi.$ Invariance is with respect to the quotient map $\overline{R}(\bar{x},\varphi)=(\overline{x}_0 + a(\varphi)(\bar{x}-\bar{x}_0), \varphi+\theta)$ for some function $a(\varphi)$. This function must  then take the form $a(\varphi)= \rho(\varphi)/\rho(\varphi+\theta)$. Iterates of $\overline{R}$ will then
behave like the corresponding map  for the wedge domain, defined prior to Theorem \ref{local_stability}.
\end{proof}

\begin{figure}[htbp]
\begin{center}
%\epsfile{file=bundle.eps,scale=0.8}
\includegraphics[width=3.0 in]{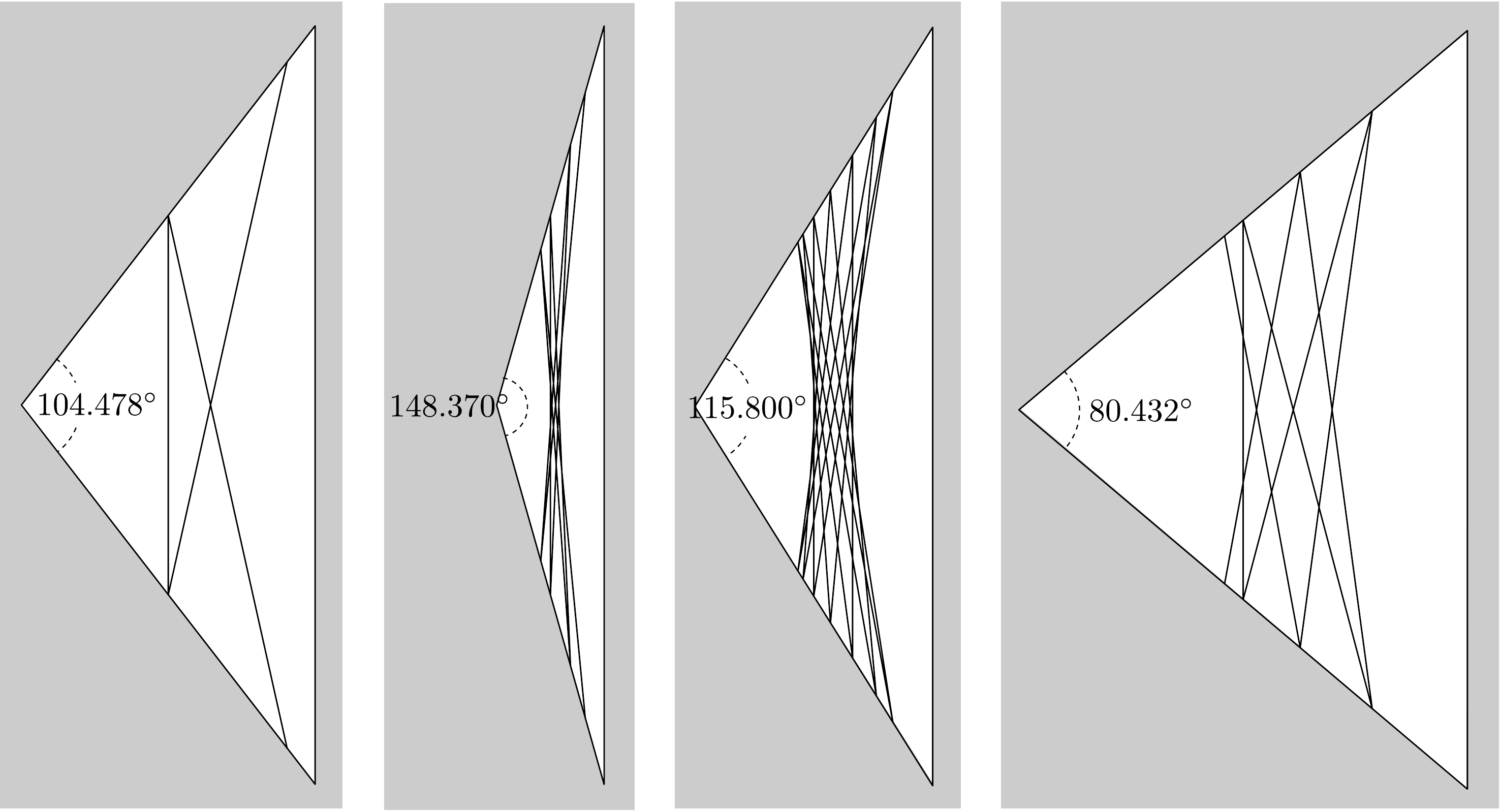}\ \ 
\caption{\small{From left to right:  projections to the plane of periodic orbits of periods $6, 14, 14, 14.$ (Bounded orbits in the same wedge domain are all periodic with the same period.)
The rotation angle $\theta$ in each case  is $2\pi p/q$ where $p/q=1/3, 1/7, 2/7,3/7$. Mass distribution  of the disc particle is uniform.}}
\label{periodpq1}
\end{center}
\end{figure}

We turn now to the question of existence of periodic orbits of higher (necessarily even) periods for wedge shapes. Clearly, a necessary condition is that the angle $\theta$ introduced in Proposition \ref{R} (see also Figure \ref{directions}) be rational. 
For orbits that do not eventually escape to infinity, this is also a sufficient condition, as a simple application of Poincar\'e recurrence shows. (See \cite{CFII}.)
Moreover, as $\theta$ is only a function of $\delta:=\cos(\beta/2)\cos\phi$, which is given by (Proposition \ref{R})
\begin{equation}\label{delta}
\cos \theta= 1-8\delta^2+8\delta^4
\end{equation}
where $\beta$ is the characteristic angle of the system (a function of the mass distribution on the disc) and $2\phi$ is the corner angle of the wedge domain, if a higher order periodic orbit exists for a given $\delta$, all bounded orbits have the same period.

\begin{figure}[htbp]
\begin{center}
%\epsfile{file=bundle.eps,scale=0.8}
\includegraphics[width=4.5 in]{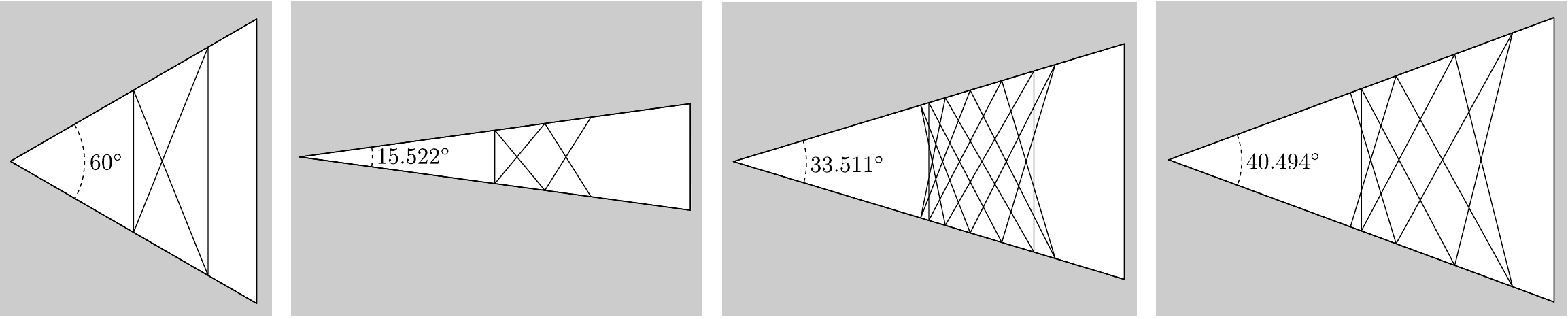}\ \ 
\caption{\small{From left to right: projections of periodic orbits of periods $4, 10, 14, 18$.  The rotation angle $\theta$ in each case is $2\pi p/q$ where $p/q$ is $1/2, 2/5, 3/7, 4/9$, respectively. Mass distribution is uniform.}}
\label{periodpq}
\end{center}
\end{figure} 

We give  a few examples for the uniform mass distribution, for which $\cos(\beta/2)=\sqrt{2/3}.$
Solving \ref{delta} for $\cos\phi$, for $\theta=2\pi p/q$, choosing first  the negative square root, gives
\begin{equation}\label{phipq} \cos \phi_{p,q}:=\frac{\sqrt{3}}{2} \sqrt{1- \sqrt{\frac{1+\cos({2\pi p}/{q})}{2}}}\end{equation}
A few examples are shown in Figure \ref{periodpq1}.

Notice that there are no restrictions on the values of $p$ and $q$. 
The following proposition is a consequence of these remarks.
\begin{theorem} For any positive even integer $n$ there exists a wedge domain for which the no-slip billiard has period-$n$ orbits. More specifically,
all bounded orbits of the no-slip billiard in a wedge domain with corner angle 
$\phi_{p,q}$ satisfying Equation \ref{phipq}
 are periodic with period $2q$.
\end{theorem}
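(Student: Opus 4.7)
The plan is to leverage the explicit iterate formula for the return map $R = T_2 T_1$ derived in Proposition \ref{R} to show that $R^q$ is the identity on every bounded orbit, and then deduce the minimal period claim. Given an even integer $n = 2q$, I would first choose $p$ coprime to $q$ (for $q = 1$ the statement reduces to the period-$2$ orbits already constructed in Section \ref{periodic section}, so assume $q \geq 2$) and set $\phi := \phi_{p,q}$ via Equation \eqref{phipq}. By construction $\delta := \cos(\beta/2)\cos\phi$ satisfies $1 - 8\delta^2 + 8\delta^4 = \cos(2\pi p/q)$, so the rotation angle of Proposition \ref{R} is exactly $\theta = 2\pi p/q$, and $q$ is the smallest positive integer with $q\theta \equiv 0 \pmod{2\pi}$.

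Next I would specialize the explicit iterate formula stated near the end of Section \ref{wedge section},
\[
\overline{R}^m(\bar{x},\varphi) = \left(\tfrac{\rho(\varphi)}{\rho(\varphi+m\theta)}\bar{x} + \left(1 - \tfrac{\rho(\varphi)}{\rho(\varphi+m\theta)}\right)\bar{x}_0,\ \varphi+m\theta\right),
\]
to $m = q$: the $2\pi$-periodicity of $\rho$ (Proposition \ref{inv}) together with $q\theta \equiv 0 \pmod{2\pi}$ collapse this to $\overline{R}^q = \mathrm{id}$ on the quotient $\mathbb{R}\times S^1$. I would then lift back to $\mathbb{R}^2 \times S^1$ via the translation invariance $R(x+s\mu_1,\varphi) = R(x,\varphi)+s\mu_1$ from Proposition \ref{R}: since the $\mu_0$-component of $x$ is preserved and $\varphi$ is unchanged, necessarily $R^q(x,\varphi) = (x + L(x,\varphi)\mu_1,\varphi)$ for some scalar $L$. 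Applying the translation invariance once more to $R^q$ itself gives $L(x+s\mu_1,\varphi) = L(x,\varphi)$, and a straightforward induction yields $R^{qk}(x,\varphi) = (x + kL(x,\varphi)\mu_1,\varphi)$ for every $k \geq 1$.

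The bounded-orbit hypothesis now closes the argument: if the $R$-orbit of $(x,\varphi)$ remains in a bounded subset of $\mathbb{R}^2$, the arithmetic progression $(kL)_{k\geq 1}$ is bounded, forcing $L = 0$ and hence $R^q(x,\varphi) = (x,\varphi)$, i.e., $T^{2q}\xi = \xi$. Minimality of $2q$ follows from two elementary observations: $T$ swaps $\mathcal{P}_1$ and $\mathcal{P}_2$, so no odd iterate can fix a point; and any shorter even period $2m < 2q$ would give $R^m\xi = \xi$, hence $m\theta \equiv 0 \pmod{2\pi}$, contradicting $\gcd(p,q) = 1$. For existence, Corollary \ref{local_stability} supplies a neighborhood of the period-$2$ state $\xi_i$ filled with bounded orbits; any such orbit with $\psi \neq 0$ has minimal period exactly $2q$ by the preceding argument.

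The main obstacle, which is really already discharged upstream, is the coboundary identity \eqref{coboundary} of Proposition \ref{inv} that forces $\overline{R}^q$ to collapse to the identity rather than merely preserve the measure $d\bar{\mu}$. Once that identity is in hand, the lift to $\mathbb{R}^2 \times S^1$, the boundedness rigidity, and the minimality check are essentially bookkeeping driven by the rotation symmetry $X(x+s\mu_1,\varphi) = X(x,\varphi)+s\mu_1$ recorded at the end of Proposition \ref{R}.
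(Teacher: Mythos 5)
Your argument is correct and reaches the same conclusion as the paper, but it replaces the paper's key sufficiency step with a different one. The paper's proof is deliberately terse: it observes that rationality of $\theta$ is necessary for periodicity, asserts that for bounded orbits it is also sufficient ``by a simple application of Poincar\'e recurrence'' (deferring to \cite{CFII}), and notes that since $\theta$ depends only on $\delta=\cos(\beta/2)\cos\phi$, all bounded orbits share the period. You instead prove sufficiency directly: the telescoped iterate formula together with the coboundary identity of Proposition \ref{inv} gives $\overline{R}^q=\mathrm{id}$ on the quotient, and then rotation invariance plus boundedness kills the residual translation $L\mu_1$ because an unbounded arithmetic progression cannot stay in a bounded set. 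This buys you something the recurrence argument alone does not immediately deliver, namely that $R^q$ is \emph{exactly} the identity on every bounded orbit (hence period exactly $2q$, with your clean minimality check via alternation of walls and $\gcd(p,q)=1$), rather than periodicity of some unspecified period for almost every point. The one point you should make explicit is that the formulas of Proposition \ref{R} and the expression for $\overline{R}^n$ are derived in the paper only for $y$ in a neighborhood of $y_1$ (small $r=\tan\psi$), whereas the theorem concerns \emph{all} bounded orbits: the velocity part $y\mapsto S_2y$ is an exact global rotation so $\psi$ is a genuine conserved quantity, and the telescoping remains valid along any actual orbit because $\rho(\varphi+j\theta)$ is a positive multiple of $y_j\cdot\epsilon_3>0$ for every post-collision velocity; stating this removes the only real gap and makes your proof fully self-contained, which the paper's is not.
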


 \begin{figure}[htbp]
\begin{center}
%\epsfile{file=bundle.eps,scale=0.8}
\includegraphics[width=3.5 in]{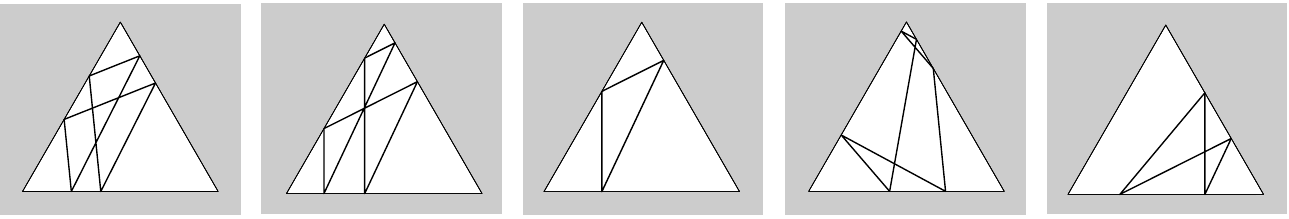}\ \ 
\caption{\small{All orbits of an equilateral triangle  no-slip billiard system are periodic with (not necessarily least) period equal to $4$ or $6$. }}
\label{equilateral}
\end{center}
\end{figure}

Solving \ref{delta} for $\cos\phi$, for $\theta=2\pi p/q$, but choosing now the positive square root, gives
$$ \cos \phi_{p,q}:=\frac{\sqrt{3}}{2} \sqrt{1+ \sqrt{\frac{1+\cos({2\pi p}/{q})}{2}}}.$$
This makes sense so long as $0.392\approx \arccos(-7/9)/2\pi \leq p/q\leq 0.5$, which greatly restricts the choices of $p$ and $q$.  A few examples in this case are shown in Figure \ref{periodpq}.

  It is interesting to observe that all orbits of the equilateral triangle  are periodic with period  $4$ or $6$. (See Figure \ref{equilateral} and \cite{CFII} for the proof.) We do not know of any other no-slip billard domain all of whose orbits are periodic.
 
  \begin{figure}[htbp]
\begin{center}
%\epsfile{file=bundle.eps,scale=0.8}
\includegraphics[width=3.0 in]{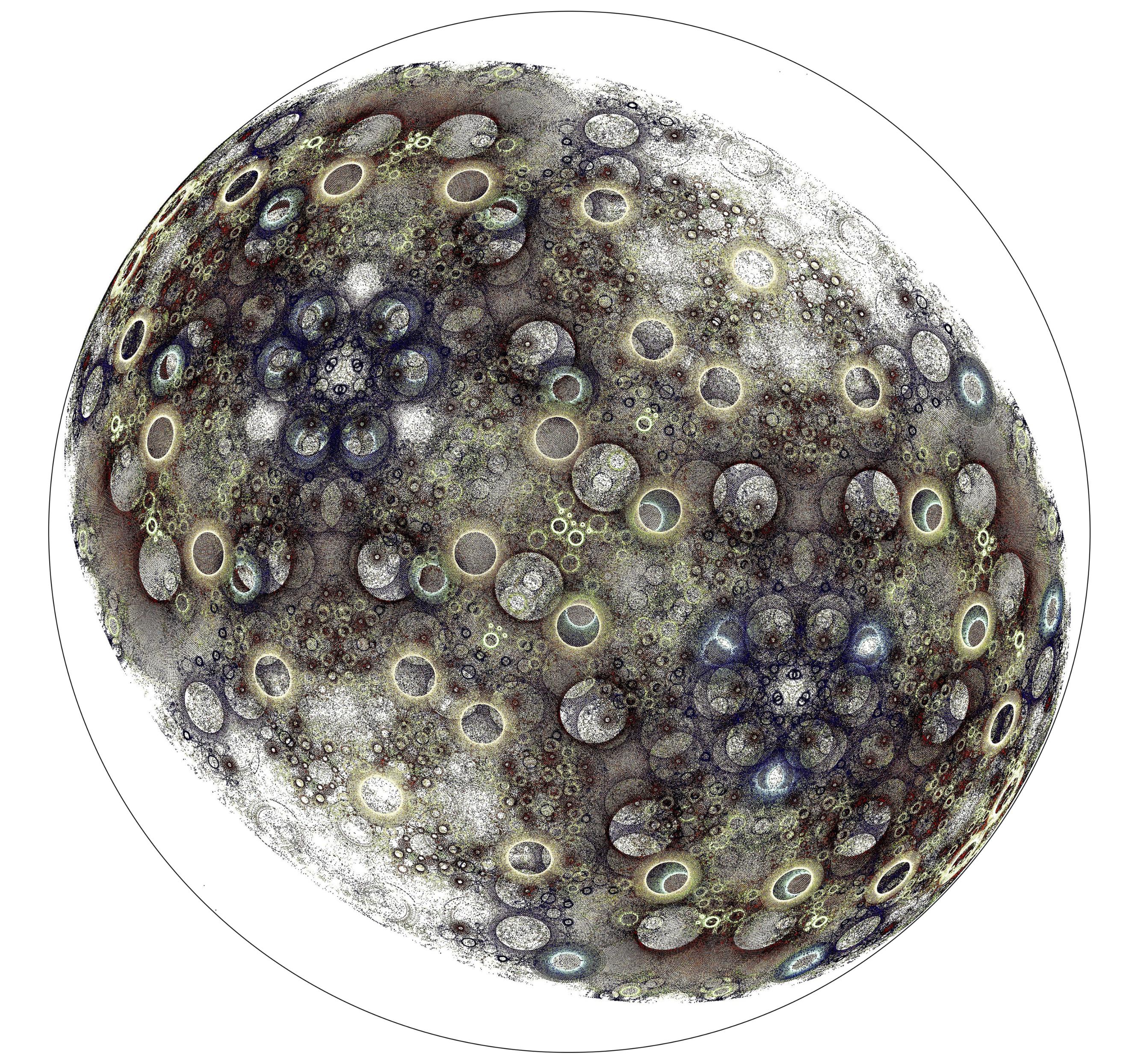}\ \ 
\caption{\small{Velocity phase portrait of the no-slip billiard on a regular pentagon. Orbits all seem to lie in a stable neighborhood of some periodic orbit. }}
\label{moon}
\end{center}
\end{figure}

A final observation concerning polygonal no-slip billiards is suggested by  plots of their velocity phase portraits. A typical such plot is shown  
in Figure \ref{moon}.  It is apparent that the orbits drawn all seem to lie on a stable neighborhood of some periodic  orbit, and this pattern is seen at
all scales that we have explored, but we do not yet have  a clear topological dynamical description of this observation.

\section{Linear stability in the presence of curvature}\label{curved section}
We now turn to the problem of characterizing stability of period-$2$ orbits for no-slip billiard domains whose boundary may have non-zero geodesic curvature. 
Here we only address {\em linear} rather than local stability as we did before for polygonal billiards.  In other words, we limit ourselves to the problem of determining when
the differential of the billiard map $dT_\xi$ at a period-$2$ collision state $\xi=(q,v)$ is elliptic or hyperbolic, and precise thresholds (where it is parabolic). 
To go from this information to local stability would require developing a KAM theory for no-slip billiards in the model of \cite{sevryuk}, something we do not do in this paper.

A simple but key observation is contained in the following lemma.
\begin{lemma}
Let $\xi=(q,v)$ be periodic with  period $2$ for the no-slip billiard map and consider the differential $\mathcal{T}:=dT^2_\xi:v^\perp\oplus v^\perp\rightarrow v^\perp\oplus v^\perp$. Then either all the eigenvalues of $\mathcal{T}$ are real, of the form $1,1, r, 1/r$ or, if not all real, they are $1, 1, \lambda, \overline{\lambda}$ where $|\lambda|=1$. 
\end{lemma}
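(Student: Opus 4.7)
The plan is to combine three structural facts established earlier in the paper into a short eigenvalue bookkeeping argument. First, by the rotation symmetry noted in the preceding proposition, the vector $e_1$ is an eigenvector of $dT_\xi$ and all its powers with eigenvalue $1$; so $1$ is always an eigenvalue of $\mathcal{T} = dT^2_\xi$. Second, the same proposition shows that reversibility of $T$ via $\mathcal{R} = J\circ C$ implies the eigenvalues of $\mathcal{T}$ come in reciprocal pairs $\lambda, 1/\lambda$, with the $1/\lambda$-eigenvector given by $\mathcal{R} u$. Third, the preservation of the canonical $4$-form, $T^{\ast} \Omega = -\Omega$ on the $4$-dimensional phase space $N$, gives $\det dT_\xi = -1$ at each point, and hence
$$
\det \mathcal{T} \;=\; (\det dT_\xi)(\det dT_{\tilde\xi}) \;=\; (-1)(-1) \;=\; 1.
$$

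Now let $p(z) = \det(zI - \mathcal{T})$. Since $\mathcal{T}$ is a real operator, the roots of $p$ are closed under complex conjugation; by the reversibility input they are also closed under reciprocation. Using the $e_1$ input to peel off one copy of the eigenvalue $1$, I would write the four eigenvalues as $\{1, \mu_1, \mu_2, \mu_3\}$ with $\mu_1\mu_2\mu_3 = 1$ by the determinant constraint. The remaining multiset $\{\mu_1, \mu_2, \mu_3\}$ must itself be closed under reciprocation, which leaves only two possible shapes: either three self-reciprocal values drawn from $\{\pm 1\}$, or one reciprocal pair $\{r, 1/r\}$ with $r \neq \pm 1$ together with a single self-reciprocal value from $\{\pm 1\}$.

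The determinant constraint then pins down the admissible sign patterns. In the first shape, $\mu_1\mu_2\mu_3 = 1$ requires an even number of $-1$'s, so the triple is $\{1,1,1\}$ or $\{1,-1,-1\}$, both yielding a spectrum of the form $\{1,1,r,1/r\}$ with $r = 1$ or $r = -1$. In the second shape, the self-reciprocal value must equal $+1$, giving a spectrum $\{1, 1, r, 1/r\}$; if $r\in\mathbb{R}$ we are done, and otherwise reality of $p$ forces $1/r = \bar r$, i.e.\ $|r| = 1$, producing the alternative $\{1, 1, \lambda, \bar\lambda\}$ with $|\lambda|=1$. The only mildly delicate point, which is the real content of the lemma, is recognizing that the combination of reciprocal pairing with $\det \mathcal{T} = 1$ is what rules out spurious cases such as $\{1,-1,r,1/r\}$ (which would be consistent with reversibility alone) and forces the eigenvalue $1$ to appear with algebraic multiplicity at least two; everything else is a routine scan of reciprocal-closed multisets of size three.
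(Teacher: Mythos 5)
Your argument is correct and uses exactly the same three ingredients as the paper's proof (rotation symmetry producing the eigenvalue $1$, reversibility via $\mathcal{R}=J\circ C$ producing reciprocal pairs, and $T^*\Omega=-\Omega$ forcing the product of the eigenvalues of $\mathcal{T}$ to be $1$), followed by the same conjugation-plus-reciprocation case analysis. Your bookkeeping is if anything a bit more explicit than the paper's, but the approach is essentially identical.
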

\begin{proof}
This is a consequence of the following observations. First, we know that $T^*\Omega=-\Omega$, where $\Omega$ is the canonical symplectic form (cf. Section
\ref{measure reversible}). Therefore, the product of the eigenvalues of $\mathcal{T}$ counted with multiplicity is $1$. 
The vector $(e_1, w_1)$, where $e_1$ is the first vector in  the product frame and $w_1$ is the first vector in  the wavefront frame,
is an eigenvector for eigenvalue $1$ of $dT_\xi$ due to rotation symmetry, as already noted. If we regard $dT_\xi$ as a self-map of $v^\perp\oplus v^\perp$ as in the corollary to 
Proposition \ref{differential} then we should use instead the vector $(w_1, w_1)$. (Recall that $w_1$ is collinear with the orthogonal projection of $e_1$ to $v^\perp$.)
 In addition, by reversibility of $T$, if $\lambda$ is an eigenvalue of $\mathcal{T}$, then $1/\lambda$ is one also, and since $\mathcal{T}$ is a real valued linear map, the complex conjugates
 $\overline{\lambda}$ and $1/\overline{\lambda}$ are also eigenvalues. As the dimension of the linear space is $4$, if one of the eigenvalues, $\lambda$,  is not real, it must be the case that $\lambda=1/\overline{\lambda}$ and we are reduced to the case $1, 1, \lambda, \overline{\lambda}$ with $\lambda\overline{\lambda}=1$. 
If all eigenvalues are real,
and $r\neq 1$ is one eigenvalue, then we are reduced to the case $1, 1, r, 1/r$.  
\end{proof}

  \begin{corollary}\label{trace corollary}
  The period-$2$ point $\xi$ is elliptic for $\mathcal{T}=dT^2_\xi$ if and only if $|\text{\em Tr}(\mathcal{T})-2|<2.$
  \end{corollary}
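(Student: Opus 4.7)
The plan is to exploit the eigenvalue dichotomy established in the preceding lemma and then compute the trace in each case. Since the characteristic polynomial of $\mathcal{T}$ is determined (up to the known trivial factor) by a single parameter once we know which of the two regimes we are in, the trace condition will pin down the regime exactly.

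First I would write out the two possibilities explicitly. By the lemma, the spectrum of $\mathcal{T}$ is either $\{1,1,r,1/r\}$ with $r\in \mathbb{R}\setminus\{0\}$, or $\{1,1,\lambda,\bar\lambda\}$ with $\lambda\in\mathbb{C}\setminus\mathbb{R}$ and $|\lambda|=1$. In the real case,
\begin{equation*}
\text{Tr}(\mathcal{T})-2 = r + \frac{1}{r},
\end{equation*}
and the elementary inequality $|r+1/r|\geq 2$ for $r\in \mathbb{R}\setminus\{0\}$ (with equality precisely at $r=\pm 1$) gives $|\text{Tr}(\mathcal{T})-2|\geq 2$. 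In the complex case, writing $\lambda = e^{i\vartheta}$ with $\vartheta\in(0,\pi)$,
\begin{equation*}
\text{Tr}(\mathcal{T})-2 = \lambda + \bar\lambda = 2\cos\vartheta,
\end{equation*}
so $|\text{Tr}(\mathcal{T})-2|=2|\cos\vartheta|<2$.

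Second, I would identify ellipticity of $\xi$ with being in the complex (strictly non-real) regime. Recall that the trivial eigenvalue $1$ with eigenvector coming from the rotation symmetry (identified in the lemma with $(w_1,w_1)$) is forced, and reversibility together with $T^*\Omega=-\Omega$ forces the remaining two eigenvalues to be reciprocal. Ellipticity of $\xi$ in the usual sense — all non-trivial eigenvalues on the unit circle but distinct from $\pm 1$ — is exactly the condition $\lambda\notin\mathbb{R}$, equivalently $\vartheta\in(0,\pi)$. Combining this with the two trace computations above yields
\begin{equation*}
\xi \text{ is elliptic}\iff \text{we are in the complex regime}\iff |\text{Tr}(\mathcal{T})-2|<2,
\end{equation*}
which is the claim.

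There is no real obstacle here beyond bookkeeping: the only subtlety is handling the boundary cases $r=\pm 1$ (parabolic) and $\lambda=\pm 1$, which all correspond to $|\text{Tr}(\mathcal{T})-2|=2$ and hence are correctly excluded by the strict inequality in the statement.
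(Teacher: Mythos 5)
Your argument is correct and is precisely the intended deduction from Lemma 7.1: the paper gives no separate proof of the corollary, relying on exactly the dichotomy and trace computation you carry out (real case gives $|\mathrm{Tr}(\mathcal{T})-2|=|r+1/r|\geq 2$, non-real unit-modulus case gives $|2\cos\vartheta|<2$). Nothing further is needed.
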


 To proceed, it is useful to express the differential map  of Corollary \ref{cor} in somewhat different form. First observe, in the period-$2$ case (in which $\tilde{v}=-v$ and $v^\perp={\tilde{v}}^\perp$), that
  $$w_2(\xi)=-w_2(\tilde{\xi})\ \text{and} \  \frac{\cos\psi(\tilde{q},v)}{\cos \phi(\tilde{q},v)}=\frac{\cos\psi(\tilde{\xi})}{\cos \phi(\tilde{\xi})}=\frac{\cos\psi({\xi})}{\cos \phi({\xi})}.$$
  (See Section \ref{periodic section}.)
  Now define the rank-$1$ operator
  $$\Theta_{\tilde{\xi}}(Z):= 2 \cos(\beta/2)\frac{\cos\psi(\tilde{\xi})}{\cos\phi(\tilde{\xi})} \left\langle w_2(\tilde{\xi}), Z\right\rangle u_1(\tilde{q}). $$
 Then
\begin{equation}\label{diffT} dT_{\xi}\left(\begin{array}{c}X \\Y\end{array}\right)=  \left(\begin{array}{cc}I & tI \\-\kappa(\tilde{q})\Theta_{\tilde{\xi}} & C_{\tilde{q}} - t\kappa(\tilde{q})\Theta_{\tilde{\xi}}\end{array}\right)       \left(\begin{array}{c}X \\Y\end{array}\right). \end{equation}

When the geodesic curvature satisfies $\kappa(q)=\kappa(\tilde{q})$ we obtain a simplification in the criterion for ellipticity, as will be seen shortly. With this special case in mind we 
define the linear map  $R_\xi $ on $v^\perp$ by $R_\xi w_i(\xi)=-(-1)^iw_i(\xi)$, $i=1,2.$ Notice that $Ru_1(q)=u_1(\tilde{q})$. Then
 $$ R_{\tilde{\xi}}  C_{\tilde{q}} = C_q R_\xi, \ \ R_{\tilde{x}}\Theta_{\tilde{\xi}}= \Theta_\xi\, R_\xi.$$
The same notation $R_\xi$ will be used   for the map on $v^\perp\oplus v^\perp$ given by $(z_1,z_2)\mapsto (R_\xi z_1, R_\xi z_2)$. Notice that $R:=R_\xi=R_{\tilde{\xi}}$
since $w_i(\tilde{\xi})= -(-1)^i w_i(\xi)$. It follows that
\begin{equation}\label{diffTbiz} R dT_{\xi} R=  \left(\begin{array}{cc}I & tI \\-\kappa(\tilde{q})\Theta_{{\xi}} & C_{{q}} - t\kappa(\tilde{q})\Theta_{{\xi}}\end{array}\right). \end{equation}
In particular, when $\kappa(q)=\kappa(\tilde{q})$, we have
$ R dT_\xi R = dT_{\tilde{\xi}}$ and $dT^2_\xi = (R dT_\xi)^2$. 
Therefore, rather than computing the trace of $dT^2_\xi$, we need only consider the easier to compute trace of $RdT_\xi$. A straightforward calculation  gives the trace of these maps, which we record  in the next lemma.

\begin{lemma}\label{traces}
Let $\xi=(q,v)$ have period $2$ and set  $\tilde{\xi}:= T(\xi)$,  ${C}:=C_{q}, {\Theta}:= \Theta_q.$
Then
$$\text{\em Tr}\left(dT^2_\xi\right) =\text{\em Tr}\left\{I+ (C R)^2 - t(\kappa(q)+\kappa(\tilde{q}))\left[\Theta + (C R) (\Theta R)\right] + t^2 \kappa(q)\kappa(\tilde{q}) (\Theta R)^2\right\}.  $$
When $\kappa:=\kappa(\tilde{q})=\kappa(q)$, we have
$ \text{\em Tr}(R dT_\xi)=\text{\em Tr}\left(CR + t\kappa\Theta\right).$
\end{lemma}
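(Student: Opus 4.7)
The plan is to compute both traces directly by expanding $dT_\xi$ and $dT^2_\xi$ via the block form in (\ref{diffT}), and then collapse the resulting sums using the intertwining identities $RC_{\tilde q}=C_qR$ and $R\Theta_{\tilde\xi}=\Theta_\xi R$, together with $R^2=I$ on $v^\perp$ and cyclicity of the trace. The main thing to keep track of is that by period-$2$ periodicity the two free-flight times coincide, so $dT_{\tilde\xi}$ has the same shape as (\ref{diffT}) with $\tilde q,\tilde\xi$ replaced by $q,\xi$.

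I would dispatch the second (simpler) assertion first. Since $R$ has eigenvalues $+1,-1$ on the two-dimensional space $v^\perp$, its trace vanishes, so computing the diagonal blocks of $R\,dT_\xi$ from (\ref{diffT}) gives
\[
\text{\em Tr}(R\,dT_\xi)=\text{\em Tr}(RC_{\tilde q})-t\kappa\,\text{\em Tr}(R\Theta_{\tilde\xi})=\text{\em Tr}(CR)-t\kappa\,\text{\em Tr}(\Theta R),
\]
where in the second equality I use the intertwinings together with cyclicity. To match the target $\text{\em Tr}(CR+t\kappa\Theta)$ I need $\text{\em Tr}(\Theta R)=-\text{\em Tr}(\Theta)$; this drops out of the explicit rank-one formula $\Theta_\xi(Z)=2\cos(\beta/2)(\cos\psi/\cos\phi)\langle w_2(\xi),Z\rangle u_1(q)$ together with $Rw_2=-w_2$ and self-adjointness of $R$, which in fact yield the sharper pointwise identity $\Theta R=-\Theta$.

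For the general formula I write $C_i,\Theta_i,\kappa_i$ for the data at $q,\tilde q$ ($i=1,2$), multiply the two block matrices in $dT^2_\xi=dT_{\tilde\xi}\,dT_\xi$, and sum the traces of the two diagonal blocks to obtain
\[
\text{\em Tr}(dT^2_\xi)=2+\text{\em Tr}(C_1C_2)-t\bigl(\kappa_1\text{\em Tr}(\Theta_1)+\kappa_2\text{\em Tr}(\Theta_2)\bigr)-t\bigl(\kappa_1\text{\em Tr}(\Theta_1C_2)+\kappa_2\text{\em Tr}(C_1\Theta_2)\bigr)+t^2\kappa_1\kappa_2\text{\em Tr}(\Theta_1\Theta_2).
\]
The intertwinings give $C_2=RC_1R$ and $\Theta_2=R\Theta_1R$, from which $C_1C_2=(C_1R)^2$, $\Theta_1\Theta_2=(\Theta_1R)^2$, $\text{\em Tr}(\Theta_2)=\text{\em Tr}(\Theta_1)$, $\text{\em Tr}(\Theta_1C_2)=\text{\em Tr}(C_1\Theta_2)$ (by cyclicity), and $(CR)(\Theta R)=C_1R\Theta_1R=C_1\Theta_2$. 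Substituting these identifications reproduces the claimed compact form exactly. The whole argument is bookkeeping, and the only place a careless computation could trip is the sign identity $\Theta R=-\Theta$ used for the second formula, which requires inspecting the wavefront-frame structure of $\Theta$ rather than relying on the abstract intertwining $R\Theta_{\tilde\xi}R=\Theta_\xi$ alone.
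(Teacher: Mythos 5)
Your proof is correct and is essentially the computation the paper intends (its own proof just says the expressions "follow easily" from the block form of $dT_\xi$ and the intertwining relations $RC_{\tilde q}=C_qR$, $R\Theta_{\tilde\xi}=\Theta_\xi R$, $R^2=I$). You also rightly flag and verify the one step that does not follow from the abstract intertwinings alone, namely $\Theta R=-\Theta$ from the rank-one structure $\Theta(Z)=c\,\langle w_2,Z\rangle u_1$ and $Rw_2=-w_2$, which is exactly what reconciles $\text{Tr}(R\,dT_\xi)$ with $\text{Tr}(CR+t\kappa\Theta)$ and is consistent with the matrices $[\Theta]_w$ and $[R]$ given in the paper.
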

\begin{proof}
These expressions follow easily given the above definitions and notations.
\end{proof}
These traces can now be computed using Equations (\ref{Cwavefront}) and (\ref{inner}). The matrices expressing  $C, R, \Theta$ in the wavefront basis of $v^\perp$ are given as follows. For convenience we write  
  $$c:=\cos(\beta/2),\  c_\phi:=\cos\phi,\  c_\psi:=\cos\psi,\  \varrho:= \sqrt{1-\cos^2(\beta/2)\cos^2\phi},$$ where $\phi=\phi(\xi)$ and $\psi=\psi(\xi)$ are defined in Corollary \ref{cor}.
$$[C]_w=\left(\begin{array}{cc}1-2c^2 c^2_\phi & -2 c c_\phi \varrho \\-2cc_\phi\varrho & -1+2 c^2 c^2_\phi\end{array}\right), \ \ [R]=\left(\begin{array}{cr}1 & 0 \\0 & -1\end{array}\right), \ \ [\Theta]_w=2 c \frac{c_\psi}{c_\phi}\left(\begin{array}{cc}0 & \varrho \\0 & -cc_\phi\end{array}\right). $$

Let $\bar{d}$ be the distance between the projections of $q$ and $\tilde{q}$ on plane the billiard table,  $\overline{v}$ the projection of $v$ on the same plane and $t$, as before,  the time between consecutive collisions. From $\cos\psi = \sin(\beta/2) \cos\phi /\sqrt{1-\cos^2(\beta)\cos^2\phi}$ it follows that
$t\cos\psi = \cos\phi\,  \bar{d}$.  We then obtain 
\begin{equation}\label{equal k} \text{Tr}\left(RdT_\xi\right)=\text{Tr}(CR) + t\kappa \text{Tr}(\Theta)= 2\left[1- 2\cos^2(\beta/2)\cos^2\!\phi\right] -2\kappa \bar{d}  \cos^2(\beta/2)\cos\phi\end{equation}
and 
\begin{align}\label{general trace}
\begin{split}
\text{Tr}\left(dT_\xi^2\right)        &=4\left\{\left[1-2\cos^2(\beta/2)\cos^2\!\phi\right]^2\right.\\
                                                  &\ \ \ \ \ \ \ \ \ \ \ \ \ \ \ -(\kappa(q)+\kappa(\tilde{q}))\cos^2(\beta/2)\cos\phi \left[1-2\cos^2(\beta/2)\cos^2\!\phi\right]\bar{d}\\
                                                  &\ \ \ \ \ \ \ \ \ \ \ \ \ \ \ \ \ \ \ \ \ \ \ \ \ \ \ \ \ \ \left.+ \kappa(q)\kappa(\tilde{q}) \cos^4(\beta/2)\cos^2\! \phi\, {\bar{d}}^2\right\}
\end{split}
\end{align}
Observe that in the special case in which $\kappa(q)=\kappa(\tilde{q})$ we have
$$ \text{Tr}\left(dT_\xi^2\right)=\left\{2\left[1-2\cos^2(\beta/2)\cos^2\phi\right]- 2\kappa \cos^2(\beta/2)\cos\phi\, \bar{d}\right\}^2$$

\begin{theorem}\label{general corner}
Suppose that the billiard domain has a piecewise smooth boundary with at least one corner  having inner angle less than $\pi$. Then, arbitrarily close to that
corner point,  the no-slip billiard   has (linearly) elliptic period-$2$ orbits. 
\end{theorem}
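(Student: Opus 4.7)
The plan is to combine the trace formula (\ref{general trace}) with the ellipticity criterion of Corollary \ref{trace corollary}, applied to period-$2$ orbits lying arbitrarily close to the corner. The key observation is that, as such an orbit approaches the corner point $P$, the inter-collision distance $\bar{d}$ tends to zero while the geodesic curvatures $\kappa(q),\kappa(\tilde{q})$ remain bounded, so the curvature-dependent terms in (\ref{general trace}) fall out in the limit.

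First I would establish the existence of period-$2$ orbits $\xi_r=(q_r,v_r)$ with $q_r,\tilde{q}_r\to P$ as $r\to 0$. Locally near $P$ the billiard boundary agrees with its tangent wedge (half-angle $\phi<\pi/2$) up to $O(r^2)$ perturbations at distance $r$, and by Section \ref{wedge section} this tangent wedge carries a one-parameter family of period-$2$ orbits parametrized by distance to the corner. Since the perturbation $O(r^2)$ is subdominant to the orbit scale $O(r)$, a continuation argument (or, using the reversibility involution $\mathcal{R}=J\circ C$ from Section \ref{measure reversible}, a reduced fixed-point equation for $\mathcal{R}$-symmetric period-$2$ configurations) produces period-$2$ orbits of the curved billiard at every sufficiently small scale.

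Next, I apply (\ref{general trace}) to such an orbit. Writing $c:=\cos(\beta/2)$ and $c_\phi:=\cos\phi(\xi_r)$, with $\phi(\xi_r)\to\phi$ and $\bar{d}(\xi_r)\to 0$, the limit is
$$ \lim_{r\to 0}\text{Tr}(dT^2_{\xi_r})=4\bigl[1-2c^2 c_\phi^2\bigr]^2. $$
Because the corner has $2\phi<\pi$ we have $c_\phi>0$, and because the system is no-slip, $\beta>0$ forces $c<1$; hence $c^2 c_\phi^2\in(0,1)$ strictly, and the limit lies in $[0,4)$. Outside the codimension-one condition $c^2 c_\phi^2=1/2$, the limit is strictly positive, so Corollary \ref{trace corollary} together with continuity yields $0<\text{Tr}(dT^2_{\xi_r})<4$ for all $r$ small enough---that is, linear ellipticity.

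The principal obstacle lies in Step 1. The wedge's period-$2$ family is degenerate: the wedge's scaling symmetry produces a second eigenvalue $1$ of $dT^2$ beyond the rotation-symmetry one, so a plain implicit function argument for persistence is blocked. The trick is to use the scale itself as the deformation parameter, exploiting the order mismatch $O(r)$ versus $O(r^2)$. A secondary subtlety is the borderline geometry $c^2 c_\phi^2=1/2$, where the limiting trace is the parabolic value $0$ and the $O(\bar{d})$ term in (\ref{general trace}) also vanishes; stability there is governed by the surviving $O(\bar{d}^2)$ term $4\kappa(q)\kappa(\tilde{q})c^4 c_\phi^2\bar{d}^2\geq 0$, yielding ellipticity for small $\bar{d}$ whenever the two sides of the corner have same-sign geodesic curvatures at $P$, with the remaining subcases (one side flat at $P$) needing a higher-order expansion.
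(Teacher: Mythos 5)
Your argument is essentially the paper's own proof: apply the trace formula (\ref{general trace}) to period-$2$ orbits approaching the corner, observe that $\bar{d}\to 0$ eliminates the curvature-dependent terms, and invoke Corollary \ref{trace corollary} since the limiting trace $4\bigl[1-2\cos^2(\beta/2)\cos^2\phi\bigr]^2$ lies in $(0,4)$ when $2\phi<\pi$. You are in fact more careful than the paper on the two points it glosses over, namely the existence and continuation of the period-$2$ family near a genuinely curved corner (for which the paper appeals only to Figure \ref{corner}) and the borderline case $\cos^2(\beta/2)\cos^2\phi=1/2$, where the leading term is the parabolic value $0$ and the paper's asserted inequality $0<\mathrm{Tr}(dT^2_\xi)$ requires exactly the sign analysis of the $O(\bar{d}^2)$ term that you indicate.
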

\begin{proof}
Period-$2$ orbits exist arbitrarily close to the corners of a piecewise smooth billiard domain as Figure \ref{corner} makes clear.
 For period-$2$ orbits near a corner the above expression for $\text{Tr}\left(dT^2_\xi\right)$ gives for small $\bar{d}$
$$ 0<\text{Tr}\left(dT^2_\xi\right) = 4\left[1-2\cos^2(\beta/2)\cos^2\phi\right]^2 +O(\bar{d})<4.$$
This implies that 
$$ \left| \text{Tr}\left(dT^2_\xi\right)-2 \right| <2$$
and the theorem follows from Corollary \ref{trace corollary}.
\end{proof}
Theorem \ref{general corner} (and numerical experiments) strongly suggests that such no-slip billiards will aways admit small invariant open sets and thus cannot be ergodic with respect to the canonical billiard measure.

\begin{figure}[htbp]
\begin{center}
%\epsfile{file=bundle.eps,scale=0.8}
\includegraphics[width=1.3 in]{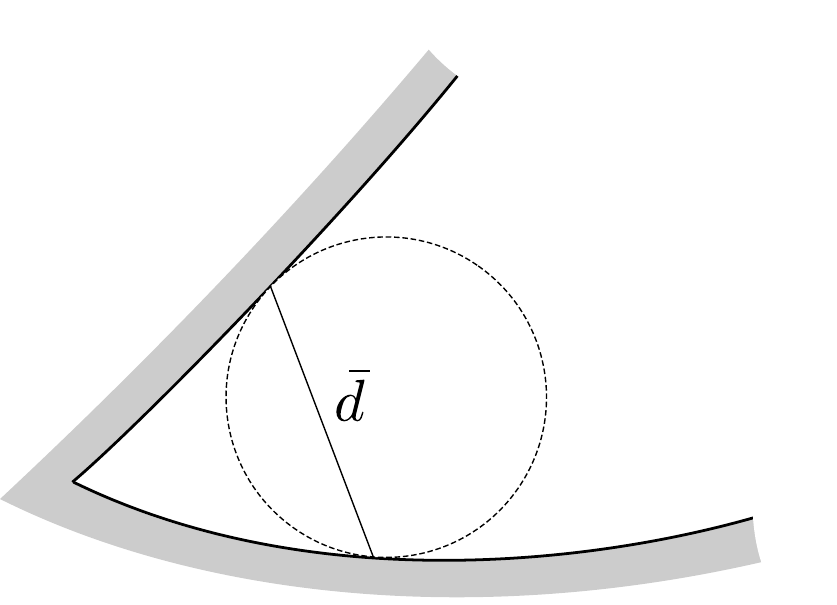}\ \ 
\caption{\small{For a  billiard domain with piecewise smooth boundary, arbitrarily near any corner  with inner angle less than $\pi$, there are linearly stable period-$2$ orbits.}}
\label{corner}
\end{center}
\end{figure}

We illustrate numerically the transition between elliptic and hyperbolic in the special case of equal curvatures at $q$ and $\tilde{q}$.
  Define $\zeta:=\kappa \bar{d}$. When $\zeta>0$ (equivalently, the curvature is positive), the critical value of $\zeta$ is 
$$ \zeta_0= \frac{2-2\cos^2(\beta/2)\cos^2\phi}{\cos^2(\beta/2)\cos\phi}.$$ The condition for the periodic point to be elliptic is $\zeta>\zeta_0$.
When $\zeta<0$, the critical value of $\zeta$ is
$$ \zeta_0= -2\cos\phi$$ and  the condition for ellipticity is  $|\zeta|<|\zeta_0|$.

 \begin{figure}[htbp]
\begin{center}
%\epsfile{file=bundle.eps,scale=0.8}
\includegraphics[width=4 in]{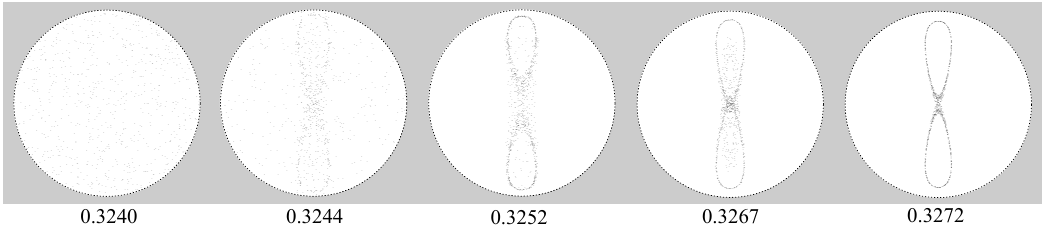}\ \ 
\caption{\small{Velocity phase portraits of single orbits near the periodic orbit of the no-slip Sinai billiard corresponding to $\phi=0$.  The mass distribution is uniform. The numbers are
the radius of the circular scatterer. }}
\label{cutoff}
\end{center}
\end{figure} 

Consider  the example of the  no-slip Sinai billiard. (See Figures \ref{example} and \ref{sinai}.)  We examine small perturbations of the periodic orbit corresponding to the angle $\phi=0$. Figure \ref{cutoff} suggests a transition from chaotic to more regular type of behavior for a radius between $0.32$ and $0.33$. In reality the critical radius for the $\phi=0$ periodic orbits is exactly $1/3$. So the observed numbers are smaller. We should bear in mind that the periodic points are not isolated, but are part of a family parametrized by $\phi$. As $\phi$ increases, the critical parameter $\zeta_0$  changes (for the uniform mass distribution, where $\cos^2(\beta/2)=2/3$) according to the expression
$\zeta_0 = (3-\cos^2\phi)/\cos\phi$. Given in terms of the radius of curvature,  $\zeta= (1-2R\cos\phi)/R$. Solving for the critical $R$ yields
$ R_0=\frac{\cos\phi}3$.  Thus for a period-$2$ trajectory having  a small but non-zero $\phi$, the critical radius is less than $1/3$. 
It is then to be expected that the experimental critical value of $R$, for orbits closed to that having   $\phi=0$  will give numbers close to but less than $1/3$.
Moreover, as   $R_0$ approaches $0$ when $\phi$ approaches $\pi/2$, we obtain the following proposition which, together with experimental evidence indicates that
the no-slip Sinai billiard is not ergodic.

\begin{figure}[htbp]
\begin{center}
\includegraphics[width=4.5in]{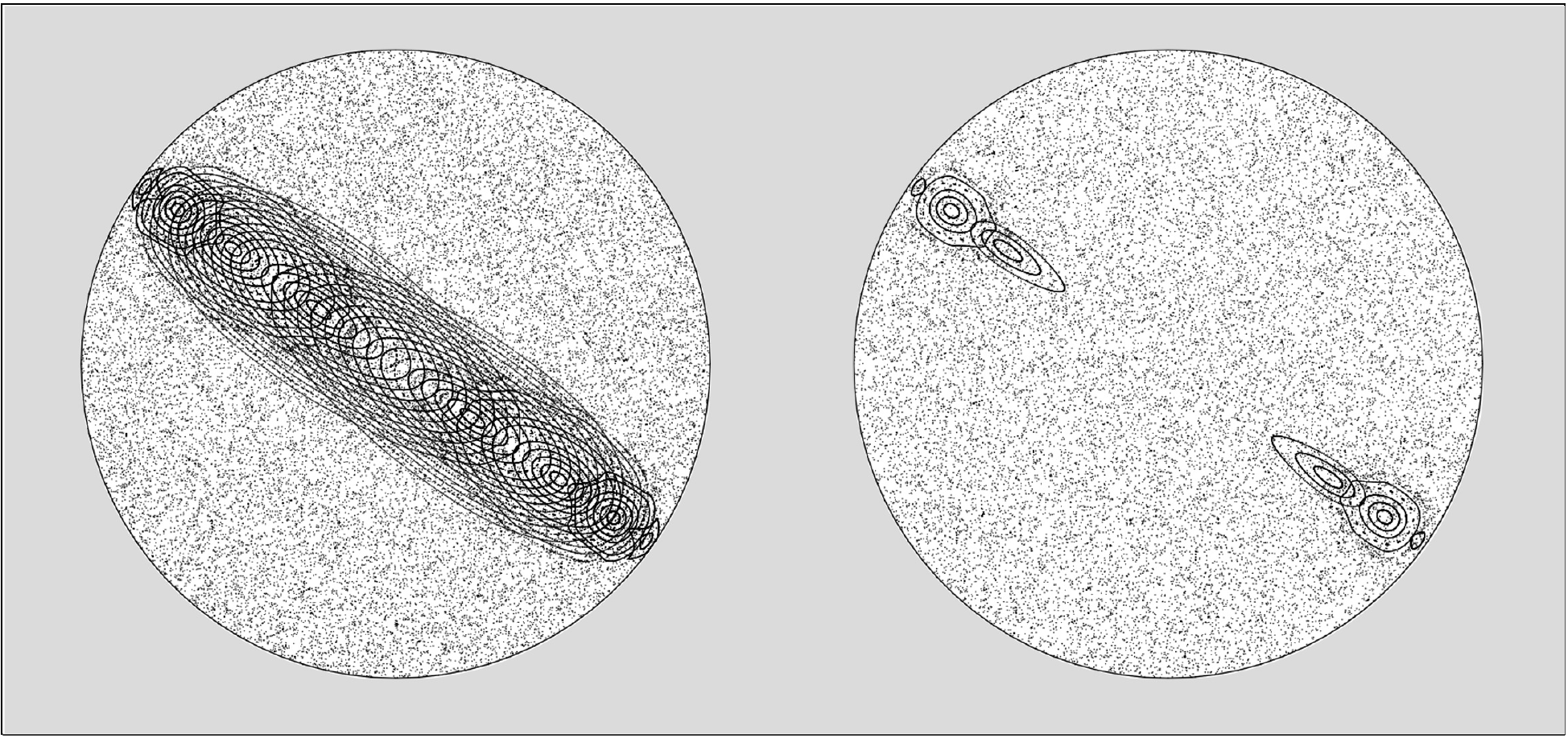}
\caption{\small{On the left: velocity phase portrait of the no-slip Sinai billiard with scatterer radius $R=0.35$. Since this is greater than the transition value $R=1/3$, the    period-$2$ orbits parametrized by $\phi$  are all  elliptic. On the right, 
  $R=0.32$ and
ellipticity has been destroyed for orbits with smaller values of $\phi$. No matter how small $R$ is, 
 elliptic orbits always exist for $\phi$ sufficiently close to $\pi/2$.}}
\label{sinaiportrait}
\end{center}
\end{figure}

\begin{proposition}
The no-slip Sinai billiard, for any choice of scatterer curvature, will contain (linearly) elliptic periodic trajectories of  period $2$.
\end{proposition}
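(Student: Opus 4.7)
The plan is to apply the trace formula of Lemma \ref{traces} to the one-parameter family of period-$2$ orbits in the no-slip Sinai billiard indexed by $\phi$ (see Figure \ref{sinai}), and then exploit the behavior of the stability threshold as $\phi \to \pi/2$. Since the scatterer is a single circle of radius $R$, both endpoints of any such orbit satisfy $\kappa(q) = \kappa(\tilde q) = 1/R$, and the general expression (\ref{general trace}) collapses to the perfect square
\[
\text{Tr}(dT_\xi^2) = \left\{2\left[1 - 2\cos^2(\beta/2)\cos^2\phi\right] - 2\kappa\cos^2(\beta/2)\cos\phi\,\bar d\right\}^2.
\]
Combining this with the Sinai flight-length formula $\bar d = 1 - 2R\cos\phi$ and the ellipticity criterion $|\text{Tr}(dT_\xi^2) - 2| < 2$ from Corollary \ref{trace corollary}, a short algebraic manipulation reduces linear ellipticity at parameter $\phi$ to the simple inequality $R > R_0(\phi)$, where
\[
R_0(\phi) = \tfrac{1}{2}\cos^2(\beta/2)\cos\phi.
\]
(For the uniform mass distribution this specializes to $R_0(\phi) = \cos\phi/3$, in agreement with the discussion preceding the proposition.)

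The decisive observation is that $R_0(\phi) \to 0$ as $\phi \to \pi/2$. Hence, given any scatterer radius $R > 0$, one may choose $\phi$ sufficiently close to $\pi/2$ so that $R > R_0(\phi)$; the corresponding period-$2$ orbit, which remains geometrically admissible because $\bar d = 1 - 2R\cos\phi > 0$ for all $\phi \in [0, \pi/2)$ whenever $R < 1/2$ (the range of admissible scatterer radii), is then linearly elliptic. This produces elliptic period-$2$ trajectories for every value of the scatterer curvature.

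The step calling for the most care is the algebraic reduction from the perfect-square trace formula to the clean threshold inequality $R > R_0(\phi)$. The condition $|A^2 - 2| < 2$ unpacks as $0 < A^2 < 4$, which in turn yields both an upper and a lower bound on the quantity $A = 2[1-2\cos^2(\beta/2)\cos^2\phi] - 2\kappa\cos^2(\beta/2)\cos\phi\,\bar d$; one must verify that after substituting $\bar d = 1 - 2R\cos\phi$ and $\kappa = 1/R$ these degenerate into the single nontrivial constraint on the ratio $R/R_0(\phi)$ identified above (the other inequality being automatic in the relevant parameter regime). Once this routine computation is completed, the proposition follows by letting $\phi \nearrow \pi/2$.
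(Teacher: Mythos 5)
Your argument is correct and follows essentially the same route as the paper: take the one-parameter family of period-$2$ orbits indexed by $\phi$, reduce the trace criterion of Corollary \ref{trace corollary} via the equal-curvature formula to the threshold $R_0(\phi)=\tfrac12\cos^2(\beta/2)\cos\phi$ (the paper's $\cos\phi/3$ for the uniform distribution), and let $\phi\to\pi/2$ so that $R_0(\phi)\to 0$. The only (harmless) gloss is that the condition $0<\mathrm{Tr}(dT_\xi^2)$ also excludes the single parabolic value $R=\cos^2(\beta/2)\cos\phi$, where $A:=2-2\cos^2(\beta/2)\cos\phi/R$ vanishes and the nontrivial eigenvalues are both $-1$; since for $\phi$ near $\pi/2$ one has $0<A<2$, this does not affect the limiting argument.
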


As another example, consider the family of billiard regions bounded by two symmetric arcs of circle depicted in Figure
\ref{contact}.

 \begin{figure}[htbp]
\begin{center}
%\epsfile{file=bundle.eps,scale=0.8}
\includegraphics[width=2.5 in]{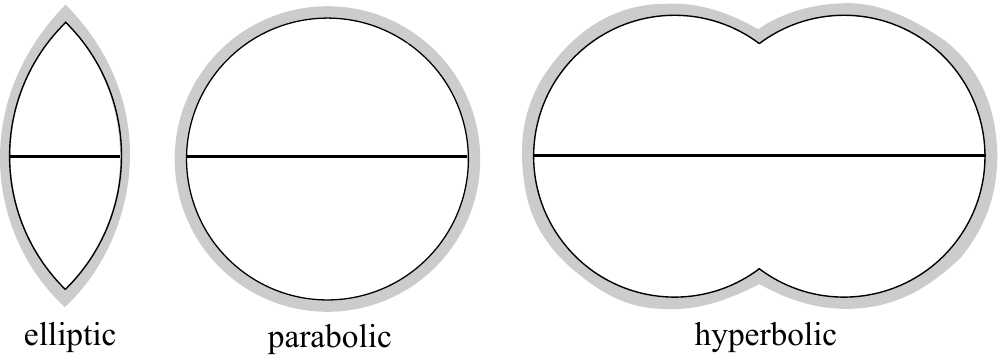}\ \ 
\caption{\small{Family of focusing no-slip billiards. }}
\label{contact}
\end{center}
\end{figure}

 In this case,  the critical transition from hyperbolic to elliptic, for the horizontal periodic orbit at middle height shown in the figure, happens for the disc. 
A   transition behavior similar to that observed for the Sinai billiard seems to occur  near the period-$2$ orbits shown  in Figure \ref{contact}.
The number indicated below each velocity phase portrait is the angle of each  circular arc. Thus, for example, the disc corresponds to angle $\pi$; smaller angles give
shapes like that on the left in Figure \ref{contact}. The cut-off angle at which the indicated periodic orbit becomes  elliptic is $\pi$. 
Notice, however, that  the experimental  value for this angle  is greater than  $\pi$. Just as in the Sinai billiard example, we should keep in mind that the periodic orbits are  not isolated; in this case, the bias would be towards greater values of the angle.

 \begin{figure}[htbp]
\begin{center}
%\epsfile{file=bundle.eps,scale=0.8}
\includegraphics[width=4 in]{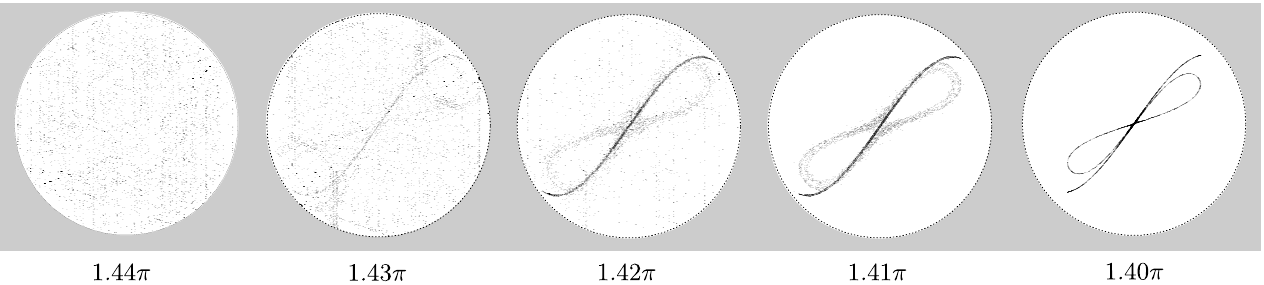}\ \ 
\caption{\small{The indicated number is the angle of the circular cap. Each depicted orbit is a perturbation of the horizontal period-$2$ orbit at the middle height, as shown in Figure \ref{contact}. Apparent chaotic behavior occurs for an angle much greater than $\pi$. This is expected since the parallel period-$2$ orbits remain linearly stable as the angle cap increases, up to a point.}}
\label{focus transition}
\end{center}
\end{figure}

\end{document}